\renewcommand{\a}{\alpha}
\renewcommand{\b}{\beta}
\newcommand{\e}{\varepsilon}
\renewcommand{\l}{\lambda}
\renewcommand{\O}{\Omega}
\newcommand{\normeq}{\trianglelefteqslant}
\newcommand{\C}{\mathcal{C}}
\newcommand{\la}{\langle}
\newcommand{\ra}{\rangle}
\renewcommand{\to}{\rightarrow}
\newcommand{\leqs}{\leqslant}
\newcommand{\geqs}{\geqslant}
\newcommand{\vs}{\vspace{2mm}}
\newcommand{\what}{\widehat} 
\newcommand{\Aut}{\operatorname{Aut}}
\newcommand{\GL}{\operatorname{GL}}
\newcommand{\PGL}{\operatorname{PGL}}
\newcommand{\SL}{\operatorname{SL}}
\newcommand{\PSigmaL}{\operatorname{P\Sigma L}}
\newcommand{\PSp}{\operatorname{PSp}}
\newcommand{\Sp}{\operatorname{Sp}}
\newcommand{\POmega}{\operatorname{P\Omega}}
\newcommand{\LL}{\operatorname{L}}
\newcommand{\UU}{\operatorname{U}}
\newcommand{\GU}{\operatorname{GU}}
\newcommand{\imod}[1]{\allowbreak\mkern4mu({\operator@font mod}\,\,#1)}
\newtheorem{theorem}{Theorem} 
\newtheorem*{conj*}{Conjecture}
\newtheorem{thm}{Theorem}[section] 
\newtheorem{prop}[thm]{Proposition} 
\newtheorem{lem}[thm]{Lemma}
\newtheorem{cor}[thm]{Corollary}
\theoremstyle{definition}
\newtheorem{rem}[thm]{Remark}
\newtheorem{remk}{Remark}
\newtheorem{ex}[thm]{Example}
\begin{document}

\author{Timothy C. Burness}
\address{T.C. Burness, School of Mathematics, University of Bristol, Bristol BS8 1UG, UK}
\email{t.burness@bristol.ac.uk}

\author{Hong Yi Huang}
\address{H.Y. Huang, School of Mathematics, University of Bristol, Bristol BS8 1UG, UK}
\email{hy.huang@bristol.ac.uk}

\title[Saxl graphs of primitive groups]{On the Saxl graphs of primitive groups \\ with soluble stabilisers} 

\begin{abstract}
Let $G$ be a transitive permutation group on a finite set $\O$ and recall that a base for $G$ is a subset of $\O$ with trivial pointwise stabiliser. The base size of $G$, denoted $b(G)$, is the minimal size of a base. If $b(G)=2$ then we can study the 
Saxl graph $\Sigma(G)$ of $G$, which has vertex set $\O$ and two vertices are adjacent if and only if they form a base. This is a vertex-transitive graph, which is conjectured to be connected with diameter at most $2$ when $G$ is primitive. In this paper, we combine probabilistic and computational methods to prove a strong form of this conjecture for all almost simple primitive groups with soluble point stabilisers. In this setting, we also establish best possible lower bounds on the clique and independence numbers of $\Sigma(G)$ and we determine the groups with a unique regular suborbit, which can be interpreted in terms of the valency of $\Sigma(G)$.
\end{abstract}

\date{\today}

\maketitle

\section{Introduction}\label{s:intro}

Let $G \leqs {\rm Sym}(\O)$ be a transitive permutation group on a finite set $\O$ with point stabiliser $H$ and recall that a \emph{base} for $G$ is a subset of $\O$ with trivial pointwise stabiliser. In turn, the \emph{base size} of $G$, denoted $b(G)$, is the minimal size of a base. This is a classical concept in permutation group theory, which arises naturally in a wide range of contexts. There is a long history of studying bases, which stretches all the way back to the nineteenth century, and there are many applications and connections to other areas of algebra and combinatorics. We refer the reader to the survey articles \cite{BC,LSh3} and \cite[Section 5]{Bur181} for further details. 

Historically, there has been a focus on studying bases for finite primitive permutation groups and there have been several major advances in this direction in recent years. For example, a number of highly influential conjectures due to Babai, Cameron, Kantor and Pyber from the 1990s have been resolved, which in turn has opened up new directions of research. For example, Seress \cite{Seress} proved that $b(G) \leqs 4$ if $G$ is a finite soluble primitive group, which established a strong form of Pyber's base size conjecture in this setting (in \cite{Pyber}, Pyber conjectured that there exists an absolute constant $c$ such that $b(G)$ is at most $c \log_{n}|G|$ for every primitive group $G$ of degree $n$ and the proof was finally completed in \cite{DHM}). Seress' theorem has recently been extended in \cite{Burness2020base}, where the main result shows that $b(G) \leqs 5$ for every finite primitive group with soluble point stabilisers (it is worth noting that the bounds in \cite{Burness2020base} and \cite{Seress} are best possible). Moreover, the exact base size is computed in \cite{Burness2020base} for all almost simple primitive groups with soluble point stabilisers (recall that $G$ is \emph{almost simple} if there exists a non-abelian simple group $G_0$ such that $G_0 \normeq G \leqs {\rm Aut}(G_0)$; here $G_0$ is the \emph{socle} of $G$).

Further motivation stems from an ambitious project initiated by Jan Saxl in the 1990s. Here the main aim is to classify all the base-two finite primitive permutation groups, where a group $G$ is said to be \emph{base-two} if $b(G)=2$. These groups arise naturally in many applications of bases to other problems. For example, see \cite{ExtremelyPrimitive} on the classification of extremely primitive groups and \cite{BurnessHarper} for applications concerning the $2$-generation properties of almost simple groups. Although a complete classification of the base-two primitive groups remains out of reach, there has been some significant progress. For instance, we refer the reader to \cite{faw2,faw1} for work of Fawcett on diagonal-type groups and twisted wreath products, respectively, and there are various partial results for affine-type groups (see \cite{aff1,aff2,Lee1,Lee2}, for example). Similarly, we refer the reader to \cite{Burness2020base,BGS0,BGS,BOW} for results towards a classification of the base-two almost simple groups.

Let $G \leqs {\rm Sym}(\O)$ be a base-two finite transitive permutation group with point stabiliser $H$. In \cite{SaxlGraph}, Burness and Giudici define the \emph{Saxl graph} $\Sigma(G)$ of $G$ as follows: the vertex set is $\O$ and two vertices are joined by an edge if and only if they form a base for $G$. Clearly, $\Sigma(G)$ is vertex-transitive and it is easy to see that $\Sigma(G)$ is connected if $G$ is primitive (as discussed in \cite{SaxlGraph}, if $G$ is imprimitive then $\Sigma(G)$ can have arbitrarily many connected components). In addition, $\Sigma(G)$ is a complete graph if and only if $G$ is a Frobenius group. Various problems concerning the valency and connectedness properties of Saxl graphs are investigated in \cite{SaxlGraph} and we refer the reader to \cite{ChenHuang2020val} for further results on the valency of $\Sigma(G)$ when $G$ is primitive. 

A number of open problems concerning Saxl graphs are presented in \cite{SaxlGraph} and the aim of this paper is to address some of these questions for the permutation groups in the collection denoted $\mathcal{G}$, which is defined as follows:
\[
\mathcal{G} = \{\mbox{\emph{finite almost simple primitive base-two groups with soluble point stabilisers}}\}.
\]
This is a natural collection to consider in this setting because one of the main results in \cite{Burness2020base} provides a complete classification of the groups in $\mathcal{G}$. We will also need to highlight the following subcollection, where $G_0$ denotes the socle of $G$ and $H$ is a point stabiliser:
\[
\mathcal{L} = \{ G \in \mathcal{G} \,:\, \mbox{$G_0 = {\rm L}_{2}(q)$, $H$ is of type ${\rm GL}_1(q) \wr S_2$ or ${\rm GL}_{1}(q^2)$} \}.
\]
In terms of Aschbacher's subgroup structure theorem \cite{asch}, the subgroups $H$ in the definition of $\mathcal{L}$ comprise the collections denoted $\mathcal{C}_2$ and $\mathcal{C}_3$. 
 
Perhaps the most intriguing open problem in this area is \cite[Conjecture 4.5]{SaxlGraph}, which asserts that if $G$ is primitive, then any two vertices in $\Sigma(G)$ have a common neighbour. In particular, this implies that the diameter of $\Sigma(G)$ is at most $2$ for every base-two finite primitive group $G$. Evidence for the conjecture is presented in \cite[Sections 4-6]{SaxlGraph}, where it is verified in several special cases. Our first main result establishes \cite[Conjecture 4.5]{SaxlGraph} for the groups in $\mathcal{G}$. This extends recent work of Chen and Du \cite{ChenDu2020Saxl}, who have shown that $\Sigma(G)$ has diameter $2$ for every base-two almost simple primitive group with socle ${\rm L}_{2}(q)$.

\begin{theorem}\label{t:main1}
Let $G \leqs {\rm Sym}(\O)$ be a permutation group in $\mathcal{G}$. Then any two vertices in $\Sigma(G)$ have a common neighbour. In particular, $\Sigma(G)$ has diameter $2$.
\end{theorem}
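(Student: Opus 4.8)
The plan is to combine a probabilistic criterion with the explicit classification of the groups in $\mathcal{G}$ given in \cite{Burness2020base}. Since $\Sigma(G)$ is vertex-transitive, it suffices to fix the base point $\omega \in \O$ with $G_\omega = H$ and to show that $\omega$ and an arbitrary vertex $\alpha$ admit a common neighbour. Write $Q(G)$ for the proportion of vertices of $\Sigma(G)$ that are \emph{not} adjacent to $\omega$; by transitivity this proportion is the same for $\alpha$, so at most $2Q(G)|\O|$ vertices fail to be adjacent to at least one of $\omega,\alpha$. Hence if $Q(G) < 1/2$ then some vertex is adjacent to both $\omega$ and $\alpha$, giving the required common neighbour and establishing the theorem for $G$.

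A vertex $\gamma$ fails to be adjacent to $\omega$ precisely when $H \cap G_\gamma \neq 1$, which forces some element of prime order in $H$ to fix $\gamma$; thus the non-neighbours of $\omega$ are contained in $\bigcup_{x} \Fix(x)$, the union running over the prime-order elements $x \in H$. Applying the union bound and grouping the $x$ into $G$-conjugacy classes yields
\[
Q(G) \leqslant \widehat{Q}(G) := \sum_{i=1}^{k} |x_i^G \cap H| \cdot \fpr(x_i),
\]
where $x_1, \dots, x_k$ represent the $G$-classes of prime-order elements meeting $H$ and $\fpr(x) = |x^G \cap H|/|x^G|$ is the fixed point ratio. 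The main task therefore reduces to proving $\widehat{Q}(G) < 1/2$. I would now run through the families arising in the classification of $\mathcal{G}$ --- socle alternating, sporadic, or of Lie type --- and in each case estimate $|x_i^G \cap H|$ from the (soluble, hence tightly constrained) structure of $H$, and bound $\fpr(x_i)$ using the established fixed point ratio estimates for almost simple primitive actions. For all but a bounded list of groups this should deliver $\widehat{Q}(G) < 1/2$.

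The hard part will be the groups for which the crude union bound only gives $\widehat{Q}(G) \geqslant 1/2$; these are exactly the cases where $H$ is large relative to $|\O|$ and the sets $\Fix(x)$ overlap heavily, so the estimate is wasteful. I expect these to comprise the family $\mathcal{L}$ (socle $\L_2(q)$, with $H$ of type $\GL_1(q)\wr S_2$ or $\GL_1(q^2)$, so that $H$ is dihedral and $|\O| \approx q^2/2$) together with finitely many small-degree exceptions. For the latter I would verify the common-neighbour property by direct computation in \textsc{Magma}. For $\mathcal{L}$, the subgroup structure of $\L_2(q)$ is completely explicit, so rather than the union bound I would compute $Q(G)$ precisely --- determining the true size of $\bigcup_x \Fix(x)$ from the intersection pattern of the fixed point sets --- or else argue suborbit-by-suborbit, exhibiting for each $H$-suborbit a concrete common neighbour of $\omega$ and its representative. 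Here the recent work of Chen and Du \cite{ChenDu2020Saxl}, which already establishes diameter $2$ for all base-two almost simple groups with socle $\L_2(q)$, should either be quotable directly or adaptable to yield the stronger common-neighbour conclusion.
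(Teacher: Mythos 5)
Your overall architecture matches the paper's: the criterion $Q(G)<1/2$ (which is \cite[Lemma 3.6]{SaxlGraph}, Proposition \ref{p:bound} here), the union bound $Q(G)\leqs \what{Q}(G)$ estimated via fixed point ratios family by family (your $\sum_i |x_i^G\cap H|\cdot{\rm fpr}(x_i)$ is literally the same quantity as the paper's $\sum_i |x_i^G|\cdot{\rm fpr}(x_i)^2$), {\sc Magma} verification for the finitely many exceptions with $Q(G)\geqs 1/2$, and a separate treatment of $\mathcal{L}$. You also correctly flag that Chen--Du only gives diameter $2$, which is weaker than the common-neighbour property for adjacent pairs.

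The gap is in your treatment of $\mathcal{L}$, which is where essentially all of the non-routine work lies. Your first proposed route --- computing $Q(G)$ exactly rather than via the union bound --- cannot succeed, because for infinitely many groups in $\mathcal{L}$ the \emph{true} value of $Q(G)$ is at least $1/2$: for $G={\rm PGL}_2(q)$ with $H=D_{2(q-1)}$ one has $Q(G)=1-4(q-1)/q(q+1)\to 1$; for $G={\rm L}_2(q)$ with $q\equiv 3\imod{4}$ and $H=D_{q+1}$ one computes $Q(G)>1/2$; and for $G={\rm P\Sigma L}_2(q)$ in the $\mathcal{C}_2$ case one gets $Q(G)=1-m(q-1)/q(q+1)>1/2$ since the number $m$ of admissible non-squares is at most $(q-1)/2$. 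So no refinement of the probabilistic bound, however sharp, can close these cases. That leaves your second route, the constructive one, which is indeed what the paper does --- but it is only a one-sentence placeholder in your proposal, and it is the hard part. Concretely, the paper first reduces (via Chen--Du) to showing that two \emph{adjacent} vertices have a common neighbour, then reduces to $G={\rm P\Sigma L}_2(q)$ (Corollaries \ref{c:c2_min} and \ref{c:c3_min}), derives exact arithmetic criteria for a pair to be a base (e.g.\ in the $\mathcal{C}_2$ case, $\{\la e_1\ra,\la e_2\ra\}$ and $\{\la e_1+be_2\ra,\la e_1+ce_2\ra\}$ form a base iff $bc\ne 0$, $-bc^{-1}$ is a non-square and $bc^{-1}$ lies in no proper subfield), and then exhibits an explicit common neighbour such as $\{\la e_1-be_2\ra,\la e_1-ce_2\ra\}$, the verification of which requires showing that quantities like $bc^{-1}+cb^{-1}$ avoid all proper subfields. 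None of this is present in your plan, so as written the proof is incomplete precisely on the infinite families where the probabilistic method provably fails.
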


\begin{remk}
It is straightforward to extend our methods in order to establish \cite[Conjecture 4.5]{SaxlGraph} for all base-two almost simple primitive groups with socle ${\rm L}_{2}(q)$; we refer the reader to Theorem \ref{t:further}.
\end{remk}

Next we turn to the clique number of $\Sigma(G)$, which is denoted $\omega(G)$. Recall that this is the maximal size of a complete subgraph. By Theorem \ref{t:main1} we immediately deduce that $\omega(G) \geqs 3$ for every group $G$ in $\mathcal{G}$ and we can establish a stronger lower bound.

\begin{theorem}\label{t:main11}
Let $G \leqs {\rm Sym}(\O)$ be a permutation group in $\mathcal{G}$ with socle $G_0$ and assume that either $G \in \mathcal{G} \setminus \mathcal{L}$ or $G \leqs {\rm PGL}_{2}(q)$. Then $\omega(G) \geqs 4$, with equality if and only if $G = A_5$ and $\O$ is the set of $2$-element subsets of $\{1,\ldots, 5\}$.
\end{theorem}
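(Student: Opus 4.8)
The plan is to use the standard probabilistic approach to bases via fixed point ratios, combined with explicit geometric constructions and computation for the borderline families. For a point stabiliser $H = G_\a$, a pair $\{\a,\b\}$ fails to be a base precisely when $\b$ is fixed by some nontrivial element of $H$, so writing $Q(G)$ for the probability that a uniformly random pair does not form a base, we have
\[
Q(G) \leqs \what{Q}(G) := \sum_{i} |x_i^G \cap H| \cdot \fpr(x_i),
\]
where $x_1, \ldots, x_k$ are representatives of the $G$-classes of prime order elements meeting $H$ and $\fpr(x_i) = |x_i^G \cap H|/|x_i^G|$. If we select $c$ points of $\O$ independently and uniformly at random, then a union bound over the $\binom{c}{2}$ pairs shows that the selected points form a clique with positive probability provided $\binom{c}{2}(\what{Q}(G) + |\O|^{-1}) < 1$; in particular $\omega(G) \geqs 5$ whenever $\what{Q}(G) < 1/10 - |\O|^{-1}$, and $\omega(G) \geqs 4$ whenever $\what{Q}(G) < 1/6 - |\O|^{-1}$.

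First I would run through the list of groups in $\mathcal{G}$ provided by the classification in \cite{Burness2020base}, organised by the socle $G_0$. For each family I would bound $\what{Q}(G)$ using the available fixed point ratio estimates together with the known structure of the soluble subgroup $H$, with the aim of verifying $\what{Q}(G) < 1/10$, and hence $\omega(G) \geqs 5$, for all but finitely many groups. This reduces the problem to a finite and explicit list of small cases --- small values of $q$, together with the groups with alternating or sporadic socle --- which I would treat directly in {\sc Magma} by constructing $\Sigma(G)$ and computing $\omega(G)$, thereby confirming $\omega(G) \geqs 4$ throughout and isolating the candidates for equality.

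The genuinely delicate cases are those with $G \leqs \PGL_2(q)$ and $H$ of type $\GL_1(q) \wr S_2$ or $\GL_1(q^2)$, that is, the members of $\mathcal{L}$ lying in $\PGL_2(q)$: here $H$ is dihedral, the relevant fixed point ratios are comparatively large, and the probabilistic bound is too weak near the boundary. For these I would work geometrically, identifying $\O$ with the unordered pairs (respectively, the pairs of conjugate points of $\PG_1(q^2)$) of points of the projective line $\PG_1(q)$ and describing adjacency in $\Sigma(G)$ explicitly in terms of the pointwise stabiliser of a configuration of points. Using this description I would construct an explicit clique of size $5$ directly from the action on $\PG_1(q)$ for all $q$ above a small bound, and settle the remaining small $q$ by computation. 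This is where the main obstacle lies: the constructions must be uniform in $q$ yet sharp enough to distinguish $\omega(G) = 4$ from $\omega(G) \geqs 5$, and the analysis must pin down $\PSL_2(5) = A_5$ acting on the $10$ pairs (the $\C_3$ case with $H = D_6 \cong S_3$) as the unique group in scope admitting no clique of size $5$.

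Finally I would assemble the equality statement. The probabilistic and geometric arguments give $\omega(G) \geqs 5$ for every group in the theorem's scope except for an explicit finite list, and a direct check of this list in {\sc Magma} verifies that $\omega(G) \geqs 5$ in every case save $G = A_5$ acting on the $2$-element subsets of $\{1,\ldots,5\}$, for which $\omega(G) = 4$. This yields both the lower bound $\omega(G) \geqs 4$ and the characterisation of equality.
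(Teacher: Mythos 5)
Your overall strategy --- a probabilistic reduction to a finite list, direct computation for the exceptions, and explicit geometry for the two-dimensional linear groups --- is the same as the paper's, but two points deserve comment. First, your union bound over the $\binom{c}{2}$ pairs of randomly chosen points requires $\what{Q}(G)<1/10$ to force $\omega(G)\geqs 5$, whereas the paper uses the sharper statement (Proposition \ref{p:bound}, quoted from \cite[Lemma 3.6]{SaxlGraph}) that $Q(G)<1/4$ already gives $\omega(G)\geqs 5$: the clique is built iteratively, using the fact that $Q(G)<1/m$ implies any $m$ vertices have a common neighbour (a union bound over $m$ vertices rather than over $\binom{m+1}{2}$ pairs). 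Your threshold is valid, but it enlarges the list of groups that must be settled by machine; since several of the computations for the groups with $Q(G)\geqs 1/4$ in Tables \ref{tab:random} and \ref{tab:random2} are already at the limit of what is comfortable (for instance ${\rm HN}$ and ${\rm P\O}_8^{+}(3).S_4$), you would need to check that the additional cases with $1/10\leqs \what{Q}(G)<1/4$ remain tractable.

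The genuine gap is the $\mathcal{C}_3$-case $G=G_0={\rm L}_2(q)$ with $H$ of type ${\rm GL}_1(q^2)$ and $q$ odd; the $\mathcal{C}_2$-case is immediate, since $\Sigma({\rm PGL}_2(q))$ is the Johnson graph $J(q+1,2)$ and hence contains cliques of size $q$, and these remain cliques for $G_0\leqs {\rm PGL}_2(q)$. You correctly flag the $\mathcal{C}_3$-case as the main obstacle, but ``construct an explicit clique of size $5$ directly from the action on ${\rm PG}_1(q)$'' is precisely the step that requires a new idea, and note that no estimate on $\what{Q}(G)$ can help here because $Q(G)\to 1/2$ as $q\to\infty$ for this family, not merely near a boundary. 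The paper's resolution (Proposition \ref{p:psl2_c3cliq_G0}) is to identify $G_0$ with ${\rm U}_2(q)$, parametrise $\O\setminus\{\a\}$ by points $\omega_b$ with $b\in\mathbb{F}_{q^2}^{\times}$ and $b^{q+1}\ne -1$, invoke the criterion that $\{\a,\omega_b\}$ is a base if and only if $b$ is a non-square in $\mathbb{F}_{q^2}$, and then check that $\{\a\}\cup\{\omega_{bx}: x\in\mathbb{F}_q^{\times},\ (bx)^{q+1}\ne -1\}$ is a clique of size at least $\frac{1}{2}(q-1)$ for any non-square $b$: the point is that multiplying by $x\in\mathbb{F}_q^{\times}$ preserves non-squareness, and pairwise adjacency of the $\omega_{bx}$ reduces, after a change of base point inside $G_0$, to the same non-square criterion. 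Without this construction, or an equivalent one, your argument does not close; the rest of your plan is sound.
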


\begin{remk}
Let us record a couple of comments on the statement of Theorem \ref{t:main11}.
\begin{itemize}\addtolength{\itemsep}{0.2\baselineskip}
\item[{\rm (a)}] First note that if $G = A_5$ and $\O$ is the set of $2$-element subsets of $\{1,\ldots, 5\}$, then $\Sigma(G)$ is the complement of the Petersen graph, which also coincides with the Johnson graph $J(5,2)$.
\item[{\rm (b)}] We expect that the bound $\omega(G) \geqs 5$ also holds for the groups $G \in \mathcal{L}$ with $G \not\leqs {\rm PGL}_{2}(q)$, but we have not been able to verify this in all cases. Here the main difficulty involves constructing an explicit clique of size $5$ when $G$ contains field automorphisms, working with a suitable geometric description of $\O$ (for example, if $H$ is of type ${\rm GL}_1(q) \wr S_2$, then we may identify $\O$ with the set of pairs of distinct $1$-dimensional subspaces of the natural module for $G_0$). With the aid of {\sc Magma} \cite{Magma}, we have verified the bound $\omega(G) \geqs 5$ for all groups in $\mathcal{L}$ with $5<q<1000$, but the general case remains open. We refer the reader to Remarks \ref{r:c2_clique} and \ref{r:c3_clique} for further details on the  difficulties that arise.
\end{itemize}
\end{remk}

For our next result, recall that the \emph{independence number} of $\Sigma(G)$, denoted $\a(G)$, is the maximal size of a co-clique (in other words, it is the clique number of the complement of $\Sigma(G)$). By applying work of Magaard and Waldecker \cite{MagaardIndependence2, MagaardIndependence3}, we will prove the following result.

\begin{theorem}\label{t:main12}
Let $G \leqs {\rm Sym}(\O)$ be a permutation group in $\mathcal{G}$. Then either $\a(G) \geqs 4$, or $G = A_5$, $\O$ is the set of $2$-element subsets of $\{1,\ldots, 5\}$ and $\a(G)=2$.
\end{theorem}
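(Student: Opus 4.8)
The key observation is that the set of fixed points of any nontrivial element is a co-clique in $\Sigma(G)$. Indeed, if $1 \neq x \in G$ and $\alpha, \beta \in \Fix(x)$ are distinct, then $x \in G_{\alpha} \cap G_{\beta}$, so $\{\alpha,\beta\}$ is not a base and hence $\alpha$ and $\beta$ are non-adjacent in $\Sigma(G)$. Therefore
\[
\a(G) \geqs \max_{1 \neq x \in G} |\Fix(x)|,
\]
and in particular $\a(G) \geqs 4$ whenever some nontrivial element of $G$ has at least four fixed points. The first step is thus to reduce to the situation where every nontrivial element of $G$ fixes at most three points, which is equivalent to requiring that the pointwise stabiliser of any four distinct points of $\O$ is trivial.

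This four-point condition is precisely the hypothesis studied by Magaard and Waldecker in \cite{MagaardIndependence2, MagaardIndependence3}, and the plan is to invoke their classification of the transitive groups with trivial four-point stabilisers. Intersecting the resulting list with $\mathcal{G}$ --- which is completely determined in \cite{Burness2020base} --- should leave only a short list of candidates, all of small degree. This is to be expected: a base-two almost simple group with a large soluble point stabiliser will contain nontrivial elements fixing many points, so the four-point condition forces $|G|$, and hence the degree, to be small.

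It then remains to compute $\a(G)$, or at least to decide whether $\a(G) \geqs 4$, for each surviving candidate. Here the fixed-point inequality above no longer settles the question, since a co-clique of size four need not be the fixed-point set of a single element: one may have four pairwise non-adjacent points with no nontrivial element fixing more than three of them. The plan is therefore to exhibit explicit co-cliques of size four directly, using the natural description of $\O$ in each case, together with a {\sc Magma} \cite{Magma} computation for the finitely many remaining groups. The single exception is $G = A_5$ acting on the ten unordered pairs from $\{1,\ldots,5\}$, where $\Sigma(G)$ is the complement of the Petersen graph, so that $\a(G)$ equals the clique number of the Petersen graph; the latter is triangle-free, whence $\a(G) = 2$. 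The main obstacle is the case-by-case verification, following the Magaard--Waldecker reduction, that this is genuinely the only group in $\mathcal{G}$ with $\a(G) < 4$ --- a conclusion that cannot be reached from the fixed-point bound alone and requires a direct analysis of the short list of four-point-trivial groups.
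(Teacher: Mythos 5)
Your proposal is correct and follows essentially the same route as the paper: the observation that fixed-point sets of nontrivial elements are co-cliques (the paper's Lemma \ref{l:pt}), the reduction to trivial four-point stabilisers, the appeal to the Magaard--Waldecker classifications \cite{MagaardIndependence2,MagaardIndependence3} intersected with the list of groups in $\mathcal{G}$, and the Petersen-graph argument for the $A_5$ exception. The only (harmless) difference is organisational: the paper splits the analysis into the cases $\a(G)=2$ (via trivial three-point stabilisers) and $\a(G)=3$ (via trivial four-point stabilisers with a nontrivial three-point stabiliser), and the surviving list turns out to contain nothing beyond $A_5$ on $2$-subsets, so no explicit co-cliques of size four are ultimately needed.
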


As explained in \cite[Remark 2.2]{SaxlGraph}, the Saxl graph $\Sigma(G)$ can be viewed as the \emph{generalised orbital graph} corresponding to the regular suborbits of $G$. In particular, $\Sigma(G)$ is an orbital graph of $G$ if and only if $G$ has a unique regular suborbit (recall that the base-two condition implies that $G$ has at least one regular suborbit), which in turn is equivalent to the property that $G$ acts arc-transitively on $\Sigma(G)$. The following result completely determines when a group in $\mathcal{G}$ has a unique regular suborbit.

\begin{theorem}\label{t:main2}
Let $G \leqs {\rm Sym}(\O)$ be a permutation group in $\mathcal{G}$ with point stabiliser $H$. Then $G$ has a unique regular suborbit if and only if 
\begin{itemize}\addtolength{\itemsep}{0.2\baselineskip}
\item[{\rm (i)}] $G = {\rm PGL}_{2}(q)$, $H = D_{2(q-1)}$ and $q \geqs 4$, $q \ne 5$; or
\item[{\rm (ii)}] $(G,H)$ is one of the cases listed in Table $\ref{tab:reg}$.
\end{itemize}
\end{theorem}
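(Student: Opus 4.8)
The plan is to reduce the classification to an \emph{exact} determination of the valency of $\Sigma(G)$. Recall that a point $\b \in \O$ lies in a regular suborbit precisely when $H_{\b} = 1$, which is to say that $\{\a,\b\}$ is a base, where $\a \in \O$ is the fixed point with $G_{\a} = H$. Thus the neighbourhood of $\a$ in $\Sigma(G)$ is exactly the union of the regular suborbits, each of which has size $|H|$, and so the valency of $\Sigma(G)$ equals $r|H|$, where $r$ denotes the number of regular suborbits. In particular, $G$ has a unique regular suborbit if and only if the valency of $\Sigma(G)$ is exactly $|H|$, and by divisibility any group in $\mathcal{G}$ with more than $|H|$ neighbours has $r \geqs 2$. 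The whole problem therefore comes down to deciding, for each $(G,H)$ in the classification of $\mathcal{G}$ established in \cite{Burness2020base}, whether the number of neighbours of $\a$ exceeds $|H|$ or equals $|H|$.

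For the bulk of the cases I would rule out a unique regular suborbit by a fixed point ratio estimate. Since $H_{\b} \ne 1$ forces $H_{\b}$ to contain an element of prime order, the set of non-neighbours of $\a$ is contained in $\bigcup_{x}\Fix(x)$, the union running over the prime order elements of $H$; grouping these into $H$-classes with representatives $x_i$ gives
\[
|\{\b \in \O : H_{\b} \ne 1\}| \leqs |\O| \sum_{i} |x_i^{H}| \, \fpr(x_i).
\]
Writing $\widehat{Q}(G)$ for the sum on the right, it follows that whenever $\widehat{Q}(G) < 1 - |H|/|\O|$ the valency exceeds $|H|$, and hence $r \geqs 2$. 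Using the standard upper bounds on fixed point ratios for almost simple groups, together with the explicit structure of the soluble stabilisers recorded in \cite{Burness2020base}, one finds that $\widehat{Q}(G)$ tends to $0$ as $|G_0| \to \infty$ for every stabiliser type except a short list; this forces $r \geqs 2$ for all sufficiently large groups of the generic types, and the remaining finitely many small cases are settled directly in {\sc Magma} \cite{Magma}.

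This leaves the short list of families for which $\widehat{Q}(G)$ does not tend to $0$, where the valency must be computed exactly, and these produce the genuine examples in (i) and (ii). The cleanest is case (i): here $G = \PGL_{2}(q)$ with $H = D_{2(q-1)}$, and I would identify $\O$ with the set of $2$-subsets of the projective line $\PG(1,q)$, taking $\a = \{0,\infty\}$ so that $H = \langle x \mapsto \l x \;(\l \in \F^{\times}),\, x \mapsto 1/x\rangle$. A direct check then shows that a $2$-subset $\{a,b\}$ with $a,b \in \F^{\times}$ is always fixed by the involution $x \mapsto ab/x \in H$, whereas a $2$-subset meeting $\{0,\infty\}$ in exactly one point has trivial stabiliser in $H$; hence the neighbours of $\a$ are precisely the $2(q-1)$ subsets of the latter type, giving valency $|H|$ and a single regular suborbit for all $q \geqs 4$ with $q \ne 5$ (the value $q = 5$ being excluded because $D_{8}$ fails to be maximal in $\PGL_{2}(5) \cong S_5$, so the relevant action is not even primitive). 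For the remaining candidates I would carry out the analogous exact count, using the geometric description of $\O$ afforded by the type of $H$, and verify the entries of Table \ref{tab:reg} computationally where a uniform argument is unavailable.

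The main obstacle lies in these boundary families, where one must pin down $r$ exactly rather than merely bound it, and in particular distinguish $r = 1$ from $r = 2$ uniformly in $q$. The difficulty is sharpest when $G$ contains field or graph automorphisms: here the relevant prime order elements of $H$ have comparatively large fixed point sets, so $\widehat{Q}(G)$ need not fall below the threshold $1 - |H|/|\O|$, and the sets $\Fix(x)$ overlap substantially, making the displayed bound too lossy to decide the question. In such cases the crude inequality must be replaced by an exact inclusion–exclusion analysis of how $H$ permutes the candidate bases, which is where the bulk of the casework, and the reliance on \cite{Burness2020base} for the precise structure of $H$, will be concentrated.
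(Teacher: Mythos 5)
Your reduction to the valency of $\Sigma(G)$, and the fixed point ratio bound for the generic case, are essentially the paper's argument: the paper combines $Q(G) \leqs \what{Q}(G)$ with the identity $Q(G) = 1 - r|H|/|\O|$ to show that $Q(G) < 1/4$ together with $|H|^2 \leqs \frac{3}{4}|G|$ forces $r \geqs 2$, verifies the order condition case by case (Proposition \ref{p:size}), and reads off the finitely many groups with $Q(G) \geqs 1/4$ from explicit, largely computational, determinations of $r$ (Tables \ref{tab:random} and \ref{tab:random2}). Your single inequality $\what{Q}(G) < 1 - |H|/|\O|$ is equivalent to this combination. Your treatment of $G = \PGL_2(q)$, $H = D_{2(q-1)}$ via the involution $x \mapsto ab/x$ is also correct and matches the paper's identification of $\Sigma(G)$ with the Johnson graph $J(q+1,2)$.

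The genuine gap lies in the remaining infinite families, namely $G_0 = {\rm L}_2(q)$ with $q$ odd, $G \cap \PGL_2(q) = G_0$ and $H$ of type $\GL_1(q)\wr S_2$ or $\GL_1(q^2)$ (the collection $\mathcal{L}$). Here $Q(G)$ tends to $1/2$, so no fixed point ratio estimate can apply, and since $q$ is unbounded a computational check cannot settle these cases either; yet you only flag this as ``the main obstacle'' and propose an unspecified inclusion--exclusion analysis. What is actually required, and what the paper supplies, is an explicit algebraic criterion for $\{\a,\b\}$ to be a base (Lemmas \ref{l:c2_cond_PSigmaL} and \ref{l:c3_cond_PSigmaL}: for $\PSigmaL_2(q)$ the relevant field element must be a non-square lying in no proper subfield), a reduction of the general group to $\PSigmaL_2(q)$ (Corollaries \ref{c:c2_min} and \ref{c:c3_min}), and then a count of qualifying elements: every primitive element of the field works, and the Rosser--Schoenfeld lower bound on Euler's totient function (Lemma \ref{l:euler}) gives $\phi(q-1) \geqs 4f$, hence $r \geqs 2$, for all $q > 27$. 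Without this number-theoretic input your plan cannot distinguish $r = 1$ from $r \geqs 2$ uniformly in $q$; it would also fail to isolate the two exceptional entries ${\rm L}_2(5)$ and ${\rm L}_2(9).2 = {\rm M}_{10}$ of Table \ref{tab:reg}, which arise from the $\mathcal{C}_3$-action for small $q$ and are invisible to any asymptotic estimate.
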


{\small
\begin{table}
\[
\begin{array}{lll} \hline
G & \mbox{Type of $H$} & \mbox{Comments} \\ \hline
{\rm P\Omega}_8^+(3).2^2 & {\rm O}_4^+(3)\wr S_2 & \mbox{Both groups of this shape} \\
\Omega_8^+(2).3 & {\rm O}_2^-(2) \times {\rm GU}_3(2) & \\
{\rm SO}_7(3) & {\rm O}_4^+(3) \perp {\rm O}_3(3) & \\
{\rm PSp}_6(3) & {\rm Sp}_2(3)\wr S_3 & \\ 
{\rm PGL}_4(3) & {\rm O}_4^+(3) & \\
{\rm U}_4(3).[4] & {\rm GU}_1(3)\wr S_4 & G \ne G_0.\la \delta^2,\phi\ra \\
{\rm U}_4(3) & {\rm GU}_2(3)\wr S_2 & \\
{\rm L}_3(4).D_{12} & {\rm GL}_1(4^3) & \\
{\rm L}_{3}(4).2 & \GU_3(2) & G \ne {\rm P\Sigma L}_{3}(4) \\
{\rm U}_3(5).S_3 & {\rm GU}_1(5)\wr S_3 & \\
{\rm U}_3(4) & {\rm GU}_1(4)\wr S_3 & \\
{\rm L}_2(11).2 & 2^{1+2}_{-}.{\rm O}_{2}^{-}(2) & \\
{\rm L}_{2}(9).2 & {\rm GL}_{1}(9^2) & G = {\rm M}_{10} \\ 
{\rm L}_{2}(5) & {\rm GL}_{1}(5^2) & \\
G_2(3).2 & {\rm SL}_2(3)^2 & \\
S_7 & {\rm AGL}_1(7) & \\
{\rm J}_2.2 & 5^2{:}(4\times S_3) & \\
{\rm M}_{11} & 2.S_4 & \\ \hline
\end{array}
\]
\caption{Groups in $\mathcal{G}$ with a unique regular suborbit}
\label{tab:reg}
\end{table}}

\begin{remk}\label{r:reg}
Some comments on the statement of Theorem \ref{t:main2} are in order:
\begin{itemize}\addtolength{\itemsep}{0.2\baselineskip}
\item[{\rm (a)}] In the second column of Table \ref{tab:reg} we record the \emph{type of $H$}. If $G$ is a group of Lie type, then this gives an approximate description of the structure of $H$, which is consistent with usage in \cite{KleidmanLiebeckClassicalGroups} when $G$ is a classical group. For example, the notation in the first row indicates that $H$ is the stabiliser in $G$ of an orthogonal decomposition $V = V_1 \perp V_2$ of the natural module for $G_0$, where the $V_i$ are nondegenerate $4$-dimensional subspaces of plus-type. In the final three rows of the table, we present the precise structure of $H$ in the second column.
\item[{\rm (b)}] In the first row of the table, there are two non-isomorphic groups $G$ of the form $G_0.2^2$, up to conjugacy in ${\rm Aut}(G_0)$. In both cases, $G$ has a unique regular suborbit.
\item[{\rm (c)}] Suppose $G = G_0.[4]$, where $G_0 = {\rm U}_{4}(3)$ and $H$ is of type $\GU_1(3) \wr S_4$. Note that there are three groups of this form up to conjugacy in ${\rm Aut}(G_0)$, namely $G_0.4 = {\rm PGU}_{4}(3)$ and two groups of shape $G_0.2^2$. As recorded in \cite[Table 8.10]{Low-Dimensional}, $H$ is not maximal when $G = G_0.\la \delta^2,\phi\ra = G_0.2^2$, but in the other two cases $H$ is maximal and $G$ has a unique regular suborbit (here we are adopting the notation for automorphisms from \cite{Low-Dimensional}).
\item[{\rm (d)}] Suppose $G = G_0.2$, where $G_0 = {\rm L}_{3}(4)$ and $H$ is of type $\GU_3(2)$. There are three groups of this form; each contains a maximal subgroup of the given type, but $b(G) = 2$ if and only if $G = G_0.2_1$ or $G_0.2_3$, and in both cases $G$ has a unique regular suborbit. Equivalently, $G = G_0.2$ has a unique regular suborbit if and only if $G \ne {\rm P\Sigma L}_{3}(4)$.
\end{itemize}
\end{remk}

Probabilistic and computational methods play a key role in the proofs of our main theorems and we will introduce the relevant results we will need in Section \ref{s:prel}. One of our main tools is Proposition \ref{p:bound}, which implies that if $G \leqs {\rm Sym}(\O)$ is a finite transitive base-two group and $Q(G)<1/4$, then $\omega(G) \geqs 5$ and any four vertices in $\Sigma(G)$ have a common neighbour. Here $Q(G)$ is the probability that a random pair of points in $\O$ do not form a base for $G$ (so the condition $b(G)=2$ implies that $Q(G)<1$). In view of Theorems \ref{t:main1} and \ref{t:main11}, this explains why one of our main aims is to establish the bound  $Q(G)<1/4$ whenever possible, using upper bounds on fixed point ratios to do this. This approach turns out to be effective for the groups in $\mathcal{G}\setminus \mathcal{L}$. Indeed, Theorem \ref{t:random} shows that if $G$ is such a group then $Q(G) \geqs 1/4$ if and only if $G$ is one of the finitely many groups recorded in Tables \ref{tab:random} and \ref{tab:random2} (this is the main content of Section \ref{s:random}). The latter groups are small enough to be amenable to computational methods and we refer the reader to Section \ref{ss:comp} for more details. 

However, the groups in $\mathcal{L}$ behave rather differently and this explains why they need to be handled separately (see Section \ref{ss:psl2}). 
For example, if $G = {\rm PGL}_{2}(q)$ and $H = D_{2(q-1)}$ then $G$ has a unique regular suborbit and we deduce that
\[
Q(G) = 1 - \frac{4(q-1)}{q(q+1)}.
\]
In particular, $Q(G)$ tends to $1$ as $q$ tends to infinity, whence Proposition \ref{p:bound} is not going to be useful in this situation. 

The proof of Theorem \ref{t:main1} will be completed in Section \ref{s:main1} and proofs of Theorems \ref{t:main11} and \ref{t:main12} are presented in Section \ref{s:main11}. Finally, the proof of Theorem \ref{t:main2} is given in Section \ref{s:main2}.

\vs

\noindent \textbf{Notation.} Our notation is fairly standard. Given a finite group $G$ and a positive integer $n$, we write $C_n$, or just $n$, for a cyclic group of order $n$ and $G^n$ for the direct product of $n$ copies of $G$. An unspecified extension of $G$ by a group $H$ will be denoted by $G.H$; if the extension splits then we write $G{:}H$. We use $[n]$ for an unspecified soluble group of order $n$. If $X$ is a subset of $G$, then $i_n(X)$ is the number of elements of order $n$ in $X$. We adopt the standard notation for simple groups of Lie type from \cite{KleidmanLiebeckClassicalGroups}. In particular, ${\rm L}_{n}^{\e}(q)$ denotes ${\rm PSL}_{n}(q)$ (when $\e=+$) and ${\rm PSU}_{n}(q)$ (when $\e=-$). We write ${\rm P\O}_{n}^{\e}(q)$ for the simple orthogonal groups, which differs from the notation used in the \textsc{Atlas} \cite{ATLAS}. All logarithms are base two.

\vs

\noindent \textbf{Acknowledgements.} Both authors thank Eamonn O'Brien for his assistance with several computations in this paper. The second author thanks the China Scholarship Council for supporting his doctoral studies at the University of Bristol and he thanks the Southern University of Science and Technology (SUSTech) for their generous hospitality during a visit in 2021.

\section{Preliminaries}\label{s:prel}

In this section, we present the main probabilistic and computational methods that will be used in the proofs of our main theorems.

\subsection{Probabilistic methods}\label{ss:prob}

Let $G \leqs {\rm Sym}(\O)$ be a transitive permutation group on a finite set $\O$ with point stabiliser $H$. Here we recall a powerful probabilistic approach for bounding the base size of $G$, which was originally introduced by Liebeck and Shalev \cite{LSh2} in their innovative proof of a conjecture of Cameron and Kantor on bases for almost simple primitive groups. 

Fix a positive integer $c$ and let $Q(G,c)$ be the probability that a randomly chosen $c$-tuple of points in $\O$ does not form a base for $G$, that is
\begin{equation}\label{e:qgc}
Q(G,c) = \frac{|\{(\a_1, \ldots, \a_c) \in \O^c \,:\, \bigcap_i G_{\a_i} \ne 1\}|}{|\O|^c}
\end{equation}
and note that $b(G) \leqs c$ if and only if $Q(G,c)<1$. Clearly, a subset $\{\a_1, \ldots, \a_c \} \subseteq \O$ is not a base for $G$ if and only if there exists an element $x \in G$ of prime order fixing each $\a_i$. Now the probability that $x$ fixes a randomly chosen element of $\O$ is given by the \emph{fixed point ratio}
\[
{\rm fpr}(x) = \frac{|C_{\O}(x)|}{|\O|} = \frac{|x^G \cap H|}{|x^G|},
\]
where $C_{\O}(x)$ is the set of fixed points of $x$ on $\O$, whence    
\[
Q(G,c) \leqs \sum_{x \in \mathcal{P}} {\rm fpr}(x)^c = \sum_{i=1}^{k} |x_i^{G}| \cdot {\rm fpr}(x_i)^c =:  \what{Q}(G,c),
\]
where $\mathcal{P} = \bigcup_ix_i^G$ is the set of elements of prime order in $G$.  

The following elementary result (\cite[Lemma 2.1]{Bur7}) is a useful tool for estimating $\what{Q}(G,c)$.

\begin{lem}\label{l:calc}
Suppose $x_{1}, \ldots, x_{m}$ represent distinct $G$-classes such that $\sum_{i}{|x_{i}^{G}\cap H|}\leqs A$ and $|x_{i}^{G}|\geqs B$ for all $i$. Then 
\[
\sum_{i=1}^{m} |x_i^{G}| \cdot {\rm fpr}(x_i)^c \leqs B(A/B)^c
\]
for every positive integer $c$.
\end{lem}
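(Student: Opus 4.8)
The plan is to reduce everything to an elementary estimate on sums by substituting the definition of the fixed point ratio and then exploiting the two hypotheses on the class sizes and their intersections with $H$. Writing $a_i = |x_i^G \cap H|$ and $b_i = |x_i^G|$, the defining identity ${\rm fpr}(x_i) = |x_i^G \cap H|/|x_i^G| = a_i/b_i$ immediately gives
\[
|x_i^G| \cdot {\rm fpr}(x_i)^c = b_i \cdot \frac{a_i^c}{b_i^c} = \frac{a_i^c}{b_i^{c-1}},
\]
so the quantity to be bounded is $\sum_{i=1}^m a_i^c / b_i^{c-1}$, where the hypotheses read $\sum_i a_i \leqs A$ and $b_i \geqs B$ for all $i$.

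The second step is to remove the denominators. Since $c$ is a positive integer, the exponent $c-1$ is nonnegative, so $b_i \geqs B$ yields $b_i^{c-1} \geqs B^{c-1}$ and hence $1/b_i^{c-1} \leqs 1/B^{c-1}$ for each $i$. Factoring this out gives
\[
\sum_{i=1}^m \frac{a_i^c}{b_i^{c-1}} \leqs \frac{1}{B^{c-1}} \sum_{i=1}^m a_i^c.
\]
For the remaining sum I would use that each $a_i$ is a nonnegative integer bounded above by the total, namely $a_i \leqs \sum_j a_j \leqs A$; consequently $a_i^c = a_i \cdot a_i^{c-1} \leqs A^{c-1} a_i$, and summing gives $\sum_i a_i^c \leqs A^{c-1} \sum_i a_i \leqs A^{c-1} \cdot A = A^c$.

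Combining the two bounds produces
\[
\sum_{i=1}^m |x_i^G| \cdot {\rm fpr}(x_i)^c \leqs \frac{A^c}{B^{c-1}} = B \left(\frac{A}{B}\right)^c,
\]
which is exactly the claimed inequality. I expect no genuine obstacle here: the result is a clean consequence of monotonicity of the power function $t \mapsto t^{c-1}$ on the nonnegative reals. The only points meriting a word of care are the boundary case $c=1$, where $b_i^{c-1}=1$ and the statement collapses to $\sum_i a_i \leqs A = B(A/B)$, and the step $a_i \leqs A$, which relies only on nonnegativity of the $a_j$ so that no single term can exceed the total. Both are routine, so the proof is essentially the three displays above.
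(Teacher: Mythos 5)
Your proof is correct and is essentially the standard argument: the paper gives no proof of its own (it cites \cite[Lemma 2.1]{Bur7}), and the proof there proceeds exactly as you do, writing the sum as $\sum_i a_i^c/b_i^{c-1}$, bounding the denominators below by $B^{c-1}$ and the numerator sum by $A^c$. Your handling of the edge case $c=1$ and the step $a_i \leqs A$ is careful and accurate, so there is nothing to add.
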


In this paper, we are interested in the case $c=2$ and we define 
\begin{equation}\label{e:Qhat}
Q(G) := Q(G,2), \; \; \what{Q}(G) := \what{Q}(G,2)
\end{equation}
so $b(G) = 2$ if and only if $Q(G)<1$. As noted in \cite[Section 3.3]{SaxlGraph}, we have 
\begin{equation}\label{e:QG}
Q(G) = 1 - \frac{r|H|}{n}
\end{equation}
where $n = |G:H| = |\O|$ and $r \geqs 1$ is the number of regular suborbits of $G$.

Suppose $b(G) = 2$ and set 
\[
t(G) = \max\{m \in \mathbb{N}\,:\, Q(G)<1/m\}.
\]
The following key result is \cite[Lemma 3.6]{SaxlGraph}.

\begin{prop}\label{p:bound}
Let $G \leqs {\rm Sym}(\O)$ be a transitive group on a finite set $\O$ with $t(G) \geqs 2$. Then the following hold:
\begin{itemize}\addtolength{\itemsep}{0.2\baselineskip}
\item[{\rm (i)}] Any $t(G)$ vertices in the Saxl graph $\Sigma(G)$ have a common neighbour.
\item[{\rm (ii)}] The clique number of $\Sigma(G)$ is at least $t(G)+1$.
\end{itemize} 
\end{prop}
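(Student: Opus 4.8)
The plan is to exploit the probabilistic meaning of $Q(G)$ through a union bound. Write $m = t(G)$, so that $Q(G) < 1/m$ by the definition of $t(G)$. The first step is to record a pointwise reformulation: by vertex-transitivity, the number $N = |\{\beta \in \O : G_\alpha \cap G_\beta \ne 1\}|$ is independent of the choice of $\alpha \in \O$, so the numerator of \eqref{e:qgc} (taken with $c = 2$) equals $|\O| \cdot N$ and hence $Q(G) = N/|\O|$. In other words, for any fixed vertex $\alpha$, the proportion of vertices $\beta$ that fail to be adjacent to $\alpha$ in $\Sigma(G)$ is exactly $Q(G)$.

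For part (i), I would fix arbitrary vertices $\alpha_1, \ldots, \alpha_m \in \O$ and pick $\beta \in \O$ uniformly at random. For each $i$ the probability that $\beta$ is not adjacent to $\alpha_i$ equals $Q(G)$, so by the union bound the probability that $\beta$ fails to be adjacent to at least one $\alpha_i$ is at most $m \cdot Q(G) < m \cdot (1/m) = 1$. Therefore $\beta$ is a common neighbour of $\alpha_1, \ldots, \alpha_m$ with positive probability, and such a vertex must exist. Note that $\beta$ is automatically distinct from each $\alpha_i$, since $t(G) \ge 2$ forces $Q(G) < 1$ and hence $b(G) = 2$, so no singleton $\{\alpha_i\}$ is a base and $G_{\alpha_i} \ne 1$.

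For part (ii), I would construct a clique greedily, noting that the same union bound produces a common neighbour not only for $m$ vertices but for any collection of $k \le m$ vertices: given $\alpha_1, \ldots, \alpha_k$ with $k \le m$, a uniformly random $\beta$ fails to be adjacent to some $\alpha_i$ with probability at most $k \cdot Q(G) < k/m \le 1$, so a common neighbour exists. Beginning with a single vertex (a clique of size $1$) and repeatedly adjoining a common neighbour of the current clique---which, being adjacent to each of its members, is distinct from all of them---I can enlarge a clique of size $k$ to one of size $k+1$ for each $k = 1, \ldots, m$. This yields a clique of size $m + 1 = t(G) + 1$, so $\omega(G) \ge t(G) + 1$.

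The whole argument is elementary once the pointwise reading of $Q(G)$ is available, so there is no serious obstacle; the only points demanding a little care are the strictness of the bound $k\,Q(G) < 1$---which uses that $Q(G) < 1/m$ is a strict inequality together with $k \le m$---and the observation that each adjoined common neighbour is genuinely a new vertex, ensuring that the greedy process strictly increases the clique size at every step.
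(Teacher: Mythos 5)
Your argument is correct, and it is essentially the proof given for this statement in the cited source \cite[Lemma 3.6]{SaxlGraph} (the present paper quotes the result without reproving it): one observes that by transitivity exactly $Q(G)|\O|$ vertices fail to be adjacent to any fixed vertex, applies the union bound $t(G)\,Q(G)<1$ to produce a common neighbour, and iterates greedily for the clique bound. The two points of care you flag, strictness of the inequality and the fact that each common neighbour is a genuinely new vertex, are exactly the right ones.
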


In particular, the conclusion to Theorem \ref{t:main1} holds if $Q(G)<1/2$. Similarly, the bound $\omega(G) \geqs 5$ in Theorem \ref{t:main11} on the clique number of $\Sigma(G)$ holds if $Q(G)<1/4$. Let us also observe that if $Q(G) < 1/4$ then $G$ has a unique regular suborbit only if 
\[
|H|^2 \leqs |G| < \frac{4}{3}|H|^2.
\]

By definition, each group  $G$ in the collection $\mathcal{G}$ satisfies the bound $Q(G)<1$ and in view of the above observations, one of our main aims is to show that $Q(G)<1/4$ in most cases (see Theorem \ref{t:random}). This will reduce the proofs of Theorems \ref{t:main1} and \ref{t:main11} to a small number of specific cases that require closer attention.

It turns out that there are two infinite families in $\mathcal{G}$ that will require special attention. This is the collection denoted $\mathcal{L}$ in the introduction, which comprises the groups with socle $G_0 = {\rm L}_{2}(q)$ and point stabiliser $H$ of type ${\rm GL}_{1}(q) \wr S_2$ or ${\rm GL}_{1}(q^2)$. More precisely, $\mathcal{L}$ also contains any groups in $\mathcal{G}$ that are permutation isomorphic to one of these almost simple primitive groups with socle ${\rm L}_2(q)$. In particular, let us observe that if $G \in \mathcal{G} \setminus \mathcal{L}$ then 
\begin{equation}\label{e:excl}
G_0 \not\in \{A_5, A_6, {\rm L}_{3}(2), {\rm Sp}_{4}(2)', {}^2G_2(3)'\} \cup \{ {\rm L}_2(q) \,: \, q \leqs 9\}. 
\end{equation}

\begin{rem}\label{r:asymp}
The rationale for isolating the groups in $\mathcal{L}$ becomes clear when we consider the probability $Q(G)$ defined above. For example, by inspecting the relevant proofs in \cite{Burness2020base}, it is easy to check that if $G \in \mathcal{G} \setminus \mathcal{L}$ then $Q(G) \to 0$ as $|G| \to \infty$ (in particular, there are only finitely many groups of this form with $Q(G) \geqs 1/4$). However, the groups in $\mathcal{L}$ behave differently and they need to be handled separately. For example, if $G = {\rm L}_2(q)$ and $q$ is odd, then by inspecting the proofs of \cite[Lemmas 4.7 and 4.8]{Burness2020base} we see that
\[
Q(G) = \left\{ \begin{array}{ll}
1 - \frac{(q-1)(q+a)}{2q(q+1)} & \mbox{if $H = D_{q-1}$} \\
& \\
1 - \frac{(q+1)(q-b)}{2q(q-1)} & \mbox{if $H = D_{q+1}$}
\end{array}\right.
\]
where $(a,b) = (7,1)$ if $q \equiv 1 \imod{4}$, otherwise $(a,b)=(5,3)$. Therefore, in both cases we get $Q(G) \to 1/2$ as $q$ tends to infinity. Similarly, as noted in Section \ref{s:intro}, if $G = {\rm PGL}_2(q)$ then $Q(G) \to 1$ when $q$ is odd and $H = D_{2(q-1)}$. As a consequence, we cannot appeal to Proposition \ref{p:bound} in these cases and we will need to adopt a constructive approach to establish our main results (see Sections \ref{ss:psl2_c2} and \ref{ss:psl2_c3}).
\end{rem}

\subsection{Computational methods}\label{ss:comp}

We will apply computational methods to establish our main results when $G$ is a sporadic group, or a small degree symmetric or alternating group, or a low rank group of Lie type defined over a suitably small field. We mainly use {\sc Magma} V2.25-7 \cite{Magma} to do the computations, noting that the \textsf{GAP} Character Table Library \cite{GAPCTL} is an important tool for the analysis of sporadic groups.  

Let $G \leqs {\rm Sym}(\O)$ be a base-two almost simple primitive group with socle $G_0$ and soluble point stabiliser $H$. Our initial aim is to construct $G$ as a permutation group of an appropriate degree (this will not necessarily be the permutation representation of $G$ on $\O$). To do this, we typically use the {\sc Magma} function \texttt{AutomorphismGroupSimpleGroup} to obtain $A = {\rm Aut}(G_0)$ as a permutation group and then we identify $G$ by inspecting the subgroups of $A$ containing $G_0$. We then construct $H$ as a subgroup of $G$ in this permutation representation via the command \texttt{MaximalSubgroups(G:IsSolvable:=true)}, which returns a set of representatives of the $G$-classes of soluble maximal subgroups of $G$. In a handful of cases (due to the size of $G$), this approach is ineffective and a different method is needed in order to construct $H$. For example, we may identify $H = N_G(K)$ for some specific $p$-subgroup $K$ of $G$ (for instance, see \cite[Example 2.4]{Burness2020base}). 

The next key step is to estimate $Q(G)$, with the aim of determining the groups with $Q(G) \geqs 1/4$ (this is the content of Theorem \ref{t:random}). First we focus on $\what{Q}(G)$, recalling that $Q(G) \leqs \what{Q}(G)$ (see \eqref{e:Qhat}). It is straightforward to implement an algorithm in {\sc Magma} to compute $\what{Q}(G)$ precisely, using the functions \texttt{ConjugacyClasses} and \texttt{IsConjugate} to find a set of representatives of the conjugacy classes in $H$ and to test conjugacy in $G$, respectively. This allows us to compute $|x^G \cap H|$ for each $x \in H$ of prime order, which is the main step in calculating the contribution to $\what{Q}(G)$ from the elements in the $G$-class of $x$. Note that this approach can be implemented  without determining a set of representatives of the conjugacy classes in $G$, which can be an expensive operation in terms of time and memory. Let us also observe that $\what{Q}(G)$ can be computed precisely if we have access to the character tables of $G$ and $H$, in addition to the fusion map from $H$-classes to $G$-classes. For example, this approach works well when $G$ is a sporadic group, using the character table data stored in the \textsf{GAP} Character Table Library \cite{GAPCTL} (see the proof of Proposition \ref{p:spor}).

In some cases, it turns out that we can work effectively with a crude bound 
\begin{equation}\label{e:Qtilde}
\what{Q}(G) \leqs \widetilde{Q}(G)
\end{equation}
where the contribution to $\widetilde{Q}(G)$ from all the elements in $G$ of order $r$ (for a fixed prime $r$) with $|x^G| = m$ is given by 
\[
\frac{1}{m}\left(\sum_{i=1}^\ell|y_i^H|\right)^2
\]
and $y_1, \ldots, y_{\ell}$ represent the distinct $H$-classes of elements of order $r$ with $|y_i^G|=m$. Notice that no \texttt{IsConjugate} commands are needed to compute $\widetilde{Q}(G)$, which can be a significant saving. 

If $\what{Q}(G) \geqs 1/4$ then we need to either compute a better upper bound on $Q(G)$, or we need to determine $Q(G)$ precisely. In view of \eqref{e:QG}, it suffices to bound (or compute) the number $r$ of regular suborbits of $G$. To do this, we work with double cosets, noting that if $R$ is a complete set of $(H,H)$ double coset representatives in $G$, then  
\begin{equation}\label{e:reg}
r = |\{ x \in R \,:\, |HxH| = |H|^2\}|.
\end{equation}
If $|G:H|$ is not prohibitively large (for example, if $|G:H|< 10^7$), then we can use the {\sc Magma} function \texttt{DoubleCosetRepresentatives} to determine $R$ and then compute $r$. If $|G:H|$ is large, then we may be able to use the \texttt{DoubleCosetCanonical} function to identify sufficiently many distinct double cosets of size $|H|^2$ so that the corresponding lower bound on $r$ forces $Q(G)<1/4$. Similarly, we may be able to find a set $S$ of distinct $(H,H)$ double coset representatives so that 
\[
\sum_{x \in S} |HxH| > |G| - |H|^2
\]
and thus $r = |\{x \in S \,:\, |HxH| = |H|^2\}|$. It is straightforward to implement all of these methods in {\sc Magma}.

\section{Random bases}\label{s:random}

In this section we will prove the following theorem, which will be a key ingredient in the proofs of Theorems \ref{t:main1} and \ref{t:main11}. We adopt the notation $\mathcal{G}$ and $\mathcal{L}$ introduced above. In particular, we recall that the socle $G_0$ of a group in $\mathcal{G} \setminus \mathcal{L}$ satisfies the restrictions in \eqref{e:excl}.

\begin{thm}\label{t:random}
Let $G \leqs {\rm Sym}(\O)$ be a permutation group in $\mathcal{G} \setminus \mathcal{L}$ with socle $G_0$ and point stabiliser $H$. Then either $Q(G)<1/4$, or $(G,H)$ is one of the cases in Tables $\ref{tab:random}$ and $\ref{tab:random2}$.
\end{thm}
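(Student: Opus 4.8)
The plan is to establish the bound $Q(G)<1/4$ for all but finitely many groups $G \in \mathcal{G}\setminus\mathcal{L}$ by controlling the estimate $\what{Q}(G) = \sum_{i} |x_i^G|\cdot\fpr(x_i)^2$, appealing to the classification of the groups in $\mathcal{G}\setminus\mathcal{L}$ provided by \cite{Burness2020base}. Since $\mathcal{G}\setminus\mathcal{L}$ is partitioned according to the possibilities for the socle $G_0$ (namely the classical groups, the exceptional groups of Lie type, the alternating and symmetric groups, and the sporadic groups), the natural strategy is to work through these families in turn. In each case the subgroup structure theorem of \cite{asch} together with the explicit list of soluble maximal subgroups $H$ from \cite{Burness2020base} gives us tight control on the possible point stabilisers, and we exploit the factorisation $\fpr(x) = |x^G\cap H|/|x^G|$ so that $\what{Q}(G)\leqs \max_{x}\fpr(x)\cdot\sum_{x}\fpr(x) \leqs \max_i \fpr(x_i)\cdot \what{Q}(G,1)^{1/2}$-type estimates, or more directly via Lemma \ref{l:calc} after grouping classes with comparable $|x^G|$ and $|x^G\cap H|$.

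First I would handle the classical groups, which form the bulk of the work. Here the key input is the suite of fixed point ratio bounds for elements of prime order acting on cosets of maximal subgroups; the standard reference estimates of the form $\fpr(x) \ll |x^G|^{-1/2+o(1)}$ (in the spirit of Liebeck--Shalev) feed directly into $\what{Q}(G) = \sum_i |x_i^G|\fpr(x_i)^2 \ll \sum_i |x_i^G|^{o(1)}$, which can be driven below $1/4$ once the rank or the field size is sufficiently large. Because $H$ is soluble, its order is heavily constrained, so $\sum_i |x_i^G\cap H|$ is small relative to $|H|$, and this is exactly what makes $\what{Q}(G)$ decay. For each Aschbacher class arising in the definition of the stabilisers (principally $\mathcal{C}_2$, $\mathcal{C}_3$ and $\mathcal{C}_6$ type subgroups, together with the $\mathcal{S}$-type and normaliser-of-torus cases), I would record an explicit upper bound for $\what{Q}(G)$ as a function of $q$ and the dimension, show that it is less than $1/4$ outside a finite range, and then isolate the finitely many remaining $(G,H)$ for the tables. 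The alternating and symmetric socles with soluble stabilisers are very restricted (the relevant $H$ are affine groups $\mathrm{AGL}_1(p)$ or product-type stabilisers of small index), so here I would argue directly from the cycle-type description of fixed point ratios, again obtaining decay in the degree.

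The exceptional and sporadic socles I would treat more computationally: for the exceptional groups the soluble maximal subgroups from \cite{Burness2020base} are few, and the fixed point ratio bounds of the form $\fpr(x)\leqs |x^G|^{-1/2}$ combined with known class data bound $\what{Q}(G)$ uniformly, leaving only a short explicit list defined over the smallest fields; the sporadic cases are finite in number and dealt with using the character table data in \cite{GAPCTL} via the exact computation of $\what{Q}(G)$ described in Section \ref{ss:comp}. Throughout, whenever the clean estimate $\what{Q}(G)<1/4$ fails, I would fall back on computing $Q(G)$ (or a sharper bound than $\what{Q}(G)$) precisely using the double-coset method of \eqref{e:reg}, which detects exactly when $Q(G)\geqs 1/4$ and thereby pins down membership in Tables \ref{tab:random} and \ref{tab:random2}. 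I expect the main obstacle to be the classical groups of small rank over small-to-moderate fields, where the generic fixed point ratio bounds are not yet strong enough to force $\what{Q}(G)<1/4$ but the groups are too large for a direct \texttt{DoubleCosetRepresentatives} computation; these borderline cases will require either the refined bound $\widetilde{Q}(G)$ of \eqref{e:Qtilde}, a careful hand analysis of the dominant conjugacy classes (typically the unipotent elements or long-root-type involutions contributing most to $\what{Q}(G)$), or the targeted \texttt{DoubleCosetCanonical} approach to certify enough regular suborbits to conclude.
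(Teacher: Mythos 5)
Your proposal follows essentially the same route as the paper: partition by socle type, reuse the classification of soluble stabilisers from \cite{Burness2020base}, bound $\what{Q}(G)$ via Lemma \ref{l:calc} (in practice by importing the explicit estimates already worked out in the proofs in \cite{Burness2020base} and \cite{ExtremelyPrimitive}) to dispose of all but finitely many cases, and then settle the borderline groups computationally via exact $\what{Q}(G)$ calculations, character-table data for the sporadic groups, and the double-coset count \eqref{e:reg} or \texttt{DoubleCosetCanonical} when $\what{Q}(G)\geqs 1/4$. This is exactly the paper's strategy, and your anticipated trouble spots (low-rank classical groups over small fields, groups too large for \texttt{DoubleCosetRepresentatives}) are indeed where the paper has to work hardest.
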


{\small
\begin{table}
\[
\begin{array}{llll} \hline
G & H & r & Q(G) \\ \hline
A_9 & {\rm ASL}_{2}(3) & 2 & 17/35 \\ 
S_7 & {\rm AGL}_{1}(7) & 1 & 13/20 \\
{\rm M}_{11} & 2.S_4 & 1 & 39/55 \\
{\rm M}_{12} & A_4 \times S_3 & 13 & 16/55 \\
{\rm M}_{12}.2 & S_4 \times S_3 & 4 & 31/55 \\
{\rm J}_{1} & 19{:}6 & 9 & 257/770 \\
& D_6 \times D_{10} & 34 & 443/1463 \\
{\rm J}_{2} & 5^2{:}D_{12} & 2 & 59/84 \\
{\rm J}_{2}.2 & 5^2{:}(4 \times S_3) & 1 & 59/84 \\
{\rm J}_{3}.2 & 3^{2+1+2}{:}8.2 & 3 & 886/1615 \\
& 2^{2+4}{:}(S_3 \times S_3) & 10 & 457/969 \\
{\rm HS}.2 & 5^{1+2}.[2^5] & 3 & 106/231 \\
{\rm McL}.2 & 2^{2+4}{:}(S_3 \times S_3) & 228 & 9419/28875 \\ 
{\rm He}.2 & 2^{4+4}.(S_3 \times S_3).2 & 5 & 23011/29155 \\
{\rm Suz} & 3^{2+4}{:}2.(A_4 \times 2^2).2 & 16 & 7529/25025 \\
{\rm Suz}.2 & 3^{2+4}{:}2.(S_4 \times D_8) & 4 & 16277/25025 \\
{\rm HN} & 5^{1+4}{:}2^{1+4}.5.4 & 47 & 332152/1066527 \\
{\rm HN}.2 & 5^{1+4}{:}2^{1+4}.5.4.2 & 22 & 34457/96957 \\
{}^2B_2(8) & 13{:}4 & 7 & 7/20 \\
{}^2B_2(8).3 & 13{:}12 & 2 & 31/70 \\
{}^2F_4(2)' & 5^2{:}4A_4 & 6 & 27/52 \\
{}^2F_4(2) & 5^2{:}4S_4 & 3 & 27/52 \\
G_2(3) & ({\rm SL}_{2}(3) \circ {\rm SL}_{2}(3)).2 & 4 & 563/819 \\ 
G_2(3).2 & ({\rm SL}_{2}(3) \circ {\rm SL}_{2}(3)).2.2 & 1 & 691/819 \\
\hline
\end{array}
\]
\caption{The groups in $\mathcal{G} \setminus \mathcal{L}$ with $Q(G) \geqs 1/4$, part I}
\label{tab:random}
\end{table}}

{\small
\begin{table}
\[
\begin{array}{llll} \hline
G & \mbox{Type of $H$} & r & Q(G) \\ \hline
{\rm L}_{4}(3) & {\rm O}_{4}^{+}(3) & 6 & 131/195 \\
{\rm L}_{4}(3).2_3 & {\rm O}_{4}^{+}(3) & 3 & 131/195 \\
{\rm L}_{4}(3).2_2 & {\rm O}_{4}^{+}(3) & 2 & 457/585 \\
{\rm L}_{4}(3).2_1 = {\rm PGL}_{4}(3) & {\rm O}_{4}^{+}(3) & 1 & 521/585 \\
{\rm L}_{3}(9).2^2 & \GL_1(9) \wr S_3 & 48 & 4093/12285 \\
{\rm L}_{3}(5) & {\rm GL}_{1}(5) \wr S_3 & 30 & 199/775 \\
{\rm L}_{3}(5).2 & {\rm GL}_{1}(5) \wr S_3 & 13 & 1379/3875 \\
 & {\rm GL}_{1}(5^3) & 13 & 791/2000 \\
{\rm L}_{3}(4).2 \ne {\rm P\Sigma L}_{3}(4) &  \GU_3(2) & 1 & 17/35 \\
{\rm L}_{3}(4).6 &  \GL_1(4^3) & 5 & 11/32 \\
{\rm L}_{3}(4).S_3 = G_0.\la \delta,\gamma\ra &  \GL_1(4^3) & 3 & 97/160 \\
{\rm L}_{3}(4).D_{12} &  \GL_1(4^3) & 1 & 59/80 \\
{\rm L}_{3}(3) & \GL_1(3^3) & 2 & 11/24 \\
& {\rm O}_{3}(3) & 5 & 19/39 \\
{\rm L}_{3}(3).2 & {\rm O}_{3}(3) & 2 & 23/39 \\
{\rm L}_{2}(17) & 2_{-}^{1+2}.{\rm O}_{2}^{-}(2) & 3 & 5/17 \\
{\rm L}_{2}(13).2 & 2_{-}^{1+2}.{\rm O}_{2}^{-}(2) & 2 & 43/91 \\
{\rm L}_{2}(11).2 & 2_{-}^{1+2}.{\rm O}_{2}^{-}(2) & 1 & 31/55 \\
{\rm U}_4(5).2^2 & \GU_1(5) \wr S_4 & 409 & 361747/1421875 \\
{\rm U}_4(4) & \GU_1(4) \wr S_4 & 80 & 259/884 \\
{\rm U}_4(4).2 & \GU_1(4) \wr S_4 & 30 & 1661/3536 \\
{\rm U}_4(4).4 & \GU_1(4) \wr S_4 & 15 & 1661/3536 \\
{\rm U}_4(3) & \GU_2(3) \wr S_2 & 1 & 187/315 \\
{\rm U}_4(3).2 = {\rm U}_4(3).\la \delta^2\phi \ra & \GU_1(3) \wr S_4 & 4 & 1811/2835 \\
{\rm U}_4(3).2^2 \ne {\rm U}_{4}(3).\la \delta^2,\phi\ra & \GU_1(3) \wr S_4 & 1 & 2323/2835  \\
{\rm U}_4(3).4 & \GU_1(3) \wr S_4 & 1 & 2323/2835 \\
{\rm U}_3(9).2 & \GU_1(9) \wr S_3 & 40 & 1913/5913 \\
{\rm U}_{3}(9).4 & \GU_1(9) \wr S_3 & 20 & 1913/5913 \\
{\rm U}_3(8).2 & \GU_1(8) \wr S_3 & 78 & 1097/4256 \\
{\rm U}_3(8).S_3 & \GU_1(8) \wr S_3 & 19 & 205/448 \\
{\rm U}_3(8).6 & \GU_1(8) \wr S_3 & 25 & 2437/8512 \\
{\rm U}_{3}(8).(3 \times S_3) & \GU_1(8) \wr S_3 & 6 & 2069/4256 \\
{\rm U}_{3}(7) & \GU_1(7) \wr S_3 & 27 & 4381/14749 \\
{\rm U}_{3}(7).2 & \GU_1(7) \wr S_3 & 10 & 7069/14749 \\
{\rm U}_{3}(5).3 & \GU_1(5) \wr S_3 & 3 & 551/875 \\
& 3^{1+2}.{\rm Sp}_{2}(3) & 5 & 67/175 \\
{\rm U}_{3}(5).S_3 & \GU_1(5) \wr S_3 & 1 & 659/875 \\
& 3^{1+2}.{\rm Sp}_{2}(3) & 2 & 443/875 \\
{\rm U}_3(4) & \GU_1(4) \wr S_3 & 1 & 133/208 \\
{\rm PSp}_6(3) & {\rm Sp}_{2}(3) \wr S_3 & 1 & 853/1365 \\
{\rm Sp}_{4}(4).4 & {\rm O}_{2}^{-}(4) \wr S_2 & 2 & 103/153 \\
{\rm P\O}_8^{+}(3) & {\rm O}_{4}^{+}(3) \wr S_2 & 12 & 45041/61425 \\
{\rm P\O}_8^{+}(3).2 = {\rm PSO}_{8}^{+}(3) & {\rm O}_{4}^{+}(3) \wr S_2 & 4 & 151507/184275 \\
{\rm P\O}_8^{+}(3).2  = {\rm P\Omega}_{8}^{+}(3).\la \gamma \ra & {\rm O}_{4}^{+}(3) \wr S_2 & 3 & 53233/61425 \\
{\rm P\O}_8^{+}(3).3 & {\rm O}_{4}^{+}(3) \wr S_2 & 3 & 16379/20475 \\ 
{\rm P\O}_8^{+}(3).2^2 & {\rm O}_{4}^{+}(3) \wr S_2 & 1 & 167891/184275 \\
{\rm P\O}_8^{+}(3).4 & {\rm O}_{4}^{+}(3) \wr S_2 & 2 & 151507/184275  \\
{\rm P\O}_8^{+}(3).S_4 & {\rm O}_{2}^{-}(3) \wr S_4 & 823 &  17810761/44778825 \\
\O_8^{+}(2).3 & {\rm O}_{2}^{-}(2) \times \GU_3(2) & 1 & 2071/2800 \\
\O_7(3) & {\rm O}_{4}^{+}(3) \perp {\rm O}_{3}(3) &  5 & 1945/2457 \\
{\rm SO}_7(3) & {\rm O}_{4}^{+}(3) \perp {\rm O}_{3}(3) &  1 & 11261/12285 \\
\hline
\end{array}
\]
\caption{The groups in $\mathcal{G} \setminus \mathcal{L}$ with $Q(G) \geqs 1/4$, part II}
\label{tab:random2}
\end{table}}

\begin{rem}\label{r:q14}
We make several comments concerning Theorem \ref{t:random}.
\begin{itemize}\addtolength{\itemsep}{0.2\baselineskip}
\item[{\rm (a)}] In Table \ref{tab:random} we record the relevant cases where $G_0$ is non-classical (as previously noted, the condition $G \in \mathcal{G}\setminus \mathcal{L}$ implies that $G_0 \ne A_5,A_6$ or ${}^2G_2(3)'$).
\item[{\rm (b)}] The cases with $G_0$ classical are listed in Table \ref{tab:random2}. As before, the \emph{type of $H$} provides an approximate description of the group theoretic structure of $H$ (the precise structure can be read off from \cite{KleidmanLiebeckClassicalGroups}). 
\item[{\rm (c)}] In both tables, the number of regular suborbits of $G$ is listed in the third column.
\item[{\rm (d)}] We use the standard \textsc{Atlas} \cite{ATLAS} notation for describing the almost simple groups of the form ${\rm L}_{4}(3).2$. In particular, ${\rm L}_{4}(3).2_2$ and ${\rm L}_{4}(3).2_3$ contain involutory graph automorphisms $x$ with $C_{G_0}(x) = {\rm PSp}_{4}(3).2$ and ${\rm PSO}_{4}^{-}(3).2$, respectively.
\item[{\rm (e)}] Suppose $G = G_0.S_3$, where $G_0 = {\rm L}_3(4)$ and $H$ is of type ${\rm GL}_{1}(3^4)$. There are two groups of this form, up to conjugacy in ${\rm Aut}(G_0)$, and we find that $r=6$ and $Q(G) = 17/80$ if $G = G_0.\la \delta, \phi \ra$, whereas $r=3$ and $Q(G) = 97/160$ if $G = G_0.\la \delta, \gamma\ra$. Here we are using the notation for automorphisms in \cite{Low-Dimensional}, where $\delta$, $\phi$ and $\gamma$ denote diagonal, field and graph automorphisms, respectively. We adopt similar notation to describe the relevant groups with $G_0 = {\rm U}_{4}(3)$ or ${\rm P\O}_{8}^{+}(3)$ (in the latter case, $\gamma$ is an involutory graph automorphism).
\end{itemize}
\end{rem}

\subsection{Alternating and sporadic groups}\label{s:altspor}

\begin{prop}\label{p:alt}
The conclusion to Theorem \ref{t:random} holds if $G_0$ is an alternating group.
\end{prop}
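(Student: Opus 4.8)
The plan is to establish Theorem \ref{t:random} in the special case where the socle $G_0$ is an alternating group $A_m$. Since $G \in \mathcal{G} \setminus \mathcal{L}$, the restriction \eqref{e:excl} tells us $G_0 \ne A_5, A_6$, so we may assume $m \geqs 7$. The almost simple primitive groups with alternating socle and soluble point stabiliser are classified in \cite{Burness2020base}, so the first step is to extract from that classification the complete (short) list of pairs $(G,H)$ that can arise. For $G_0 = A_m$ with $m \geqs 7$, solubility of $H$ is extremely restrictive: the only primitive actions whose point stabilisers are soluble are the natural action of $S_7$ on cosets of ${\rm AGL}_1(7)$ and the action of $A_9$ (and its extensions) related to ${\rm ASL}_2(3)$, together with possibly a handful of further small-degree examples. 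So the real content is that there are only finitely many candidate groups, all of small degree.

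The core of the argument is then a direct verification, group by group, that $Q(G) < 1/4$ for every such $(G,H)$ \emph{except} those recorded in Table \ref{tab:random}, namely $S_7$ with $H = {\rm AGL}_1(7)$ (where $Q(G) = 13/20$) and $A_9$ with $H = {\rm ASL}_2(3)$ (where $Q(G) = 17/35$). For each candidate I would apply the probabilistic machinery of Section \ref{ss:prob}: compute the upper bound $\what{Q}(G) = \sum_i |x_i^G| \cdot \fpr(x_i)^2$ by running over representatives $x_i$ of the $G$-classes of elements of prime order meeting $H$, using $\fpr(x) = |x^G \cap H|/|x^G|$. Because all the groups are small, this is a finite computation that can be carried out explicitly in {\sc Magma} as described in Section \ref{ss:comp}, computing the fusion of $H$-classes into $G$-classes via \texttt{ConjugacyClasses} and \texttt{IsConjugate}. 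For the two exceptional groups, $\what{Q}(G)$ will exceed $1/4$, so I would instead compute $Q(G)$ \emph{exactly} via \eqref{e:QG}, $Q(G) = 1 - r|H|/n$, by determining the number $r$ of regular suborbits using the double-coset method of \eqref{e:reg}; this confirms the precise values $r = 1$, $Q(G) = 13/20$ for $S_7$ and $r = 2$, $Q(G) = 17/35$ for $A_9$ recorded in the table.

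The step I expect to require the most care is \emph{not} the computation itself but ensuring completeness of the candidate list: I must be certain that the classification in \cite{Burness2020base} has been read off correctly, that every almost simple group $G$ with $A_m \normeq G \leqs \Aut(A_m) = S_m$ (for $m \ne 6$) and soluble maximal $H$ is accounted for, and that no relevant group is being silently excluded as permutation-isomorphic into the $\mathcal{L}$ family. In particular one must check the boundary cases where a group with alternating socle coincides with a group appearing elsewhere in $\mathcal{G}$ (for instance actions of $A_m$ that are permutation isomorphic to classical-group actions on small sets), and confirm these have been allocated consistently. Once the list is pinned down, the remaining verifications are routine finite computations, and the conclusion of Theorem \ref{t:random} follows for $G_0$ alternating.
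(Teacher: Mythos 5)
Your plan has a genuine gap: the candidate list is \emph{not} finite. By \cite[Table 14]{SolubleStabiliser} and \cite[Table 4]{Burness2020base}, for every prime $m$ the group $H = {\rm AGL}_1(m)\cap G$ is a soluble maximal subgroup of $G \in \{A_m, S_m\}$ and the corresponding action is base-two, so $\mathcal{G}$ contains an infinite family of groups with alternating socle (the $S_7$ example in Table \ref{tab:random} is just the smallest member that happens to have $Q(G)\geqs 1/4$). Your assertion that ``the only primitive actions whose point stabilisers are soluble'' are the $S_7$ and $A_9$ examples ``together with possibly a handful of further small-degree examples'' is therefore false, and the strategy of verifying $Q(G)<1/4$ ``group by group'' as a finite {\sc Magma} computation cannot be carried out: no finite computation disposes of infinitely many groups.

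What is missing is a uniform estimate for the infinite family, and this is exactly how the paper proceeds. For $m \geqs 13$ the classification forces $m$ prime and $H = {\rm AGL}_1(m)\cap G$, a Frobenius group on $\{1,\ldots,m\}$, so every element $x \in H$ of prime order fixes at most one point of $\{1,\ldots,m\}$; hence $|x^G| \geqs m!/\bigl(((m-1)/2)!\,2^{(m-1)/2}\bigr) =: b$ (attained by an involution of cycle type $2^{(m-1)/2}1$), while $|H| \leqs m(m-1) =: a$. Lemma \ref{l:calc} then gives $\what{Q}(G) < a^2/b < 1/4$ for all $m \geqs 13$, and only the finitely many cases $m \leqs 12$ remain for explicit computation. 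Your description of the computational part (computing $\what{Q}(G)$ via fusion of $H$-classes, and computing $Q(G)$ exactly via double cosets for the two exceptions) is fine and matches Section \ref{ss:comp}, but without the asymptotic argument the proof is incomplete. Ironically, you flag ``completeness of the candidate list'' as the delicate step, yet the resolution is not a more careful finite enumeration but an infinite-family bound of precisely the kind your last paragraph does not contemplate.
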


\begin{proof}
Let $G_0 = A_m$ be the socle of $G$. If $m \leqs 12$ then the result is easily checked using {\sc Magma} \cite{Magma} (see Section \ref{ss:comp}), so let us assume $m \geqs 13$. By inspecting \cite[Table 14]{SolubleStabiliser} and \cite[Table 4]{Burness2020base} we deduce that $m$ is a prime and $H = {\rm AGL}_{1}(m) \cap G$, in which case  
\[
|H| \leqs m(m-1) = a, \;\; |x^G| \geqs  \frac{m!}{((m-1)/2)!2^{(m-1)/2}} = b
\]
for all $x \in H$ of prime order (minimal if $x$ is an involution, noting that $x$ has at most one fixed point on $\{1, \ldots, m\}$). In view of Lemma \ref{l:calc}, this gives $\what{Q}(G) < a^2/b < 1/4$ and the result follows. 
\end{proof}

\begin{prop}\label{p:spor}
The conclusion to Theorem \ref{t:random} holds if $G_0$ is a sporadic group.
\end{prop}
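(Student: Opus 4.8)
The plan is to follow the template already laid out for the alternating case in Proposition~\ref{p:alt}, but with the additional structure that comes from having a complete list of the relevant groups. First I would observe that by the classification of base-two almost simple primitive groups with soluble stabilisers in \cite{Burness2020base} (together with \cite[Table 14]{SolubleStabiliser}), there are only finitely many pairs $(G,H)$ with $G_0$ a sporadic group lying in $\mathcal{G}\setminus\mathcal{L}$, and these can be read off from the relevant tables. So the proof is really a finite, case-by-case verification: for each such $(G,H)$ I must either establish $Q(G)<1/4$, or else confirm that the pair appears in Table~\ref{tab:random} with the stated value of $Q(G)$ and number of regular suborbits $r$.

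The main computational engine is the bound $Q(G)\leqs\what{Q}(G)=\sum_{i}|x_i^G|\cdot\fpr(x_i)^2$ from Section~\ref{ss:prob}. For each sporadic socle I would compute $\what{Q}(G)$ using the character-table approach described in Section~\ref{ss:comp}: the \textsf{GAP} Character Table Library \cite{GAPCTL} supplies the character tables of $G$ and of the maximal soluble subgroup $H$, together with the class fusion map from $H$ into $G$, and from this data I can determine $|x^G\cap H|$ and $|x^G|$ for every element $x\in H$ of prime order, hence compute $\what{Q}(G)$ exactly. Whenever $\what{Q}(G)<1/4$ the desired conclusion is immediate and the group does not appear in the table. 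This disposes of the large majority of cases at once.

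The cases requiring more care are precisely those with $\what{Q}(G)\geqs1/4$, since here the upper bound is too weak to conclude. For these I would pass to the exact value of $Q(G)$ via \eqref{e:QG}, namely $Q(G)=1-r|H|/n$ with $n=|G:H|$, which reduces the problem to determining the number $r$ of regular suborbits. I would compute $r$ using the double-coset method of \eqref{e:reg}, applying the \textsc{Magma} function \texttt{DoubleCosetRepresentatives} (or, for the larger degrees, \texttt{DoubleCosetCanonical} to count double cosets of size $|H|^2$) to identify the regular suborbits directly. This yields the exact $Q(G)$ recorded in Table~\ref{tab:random}, and in each such case one checks $Q(G)\geqs1/4$, confirming that the group belongs in the table.

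The main obstacle is the genuinely large groups, where both $|G|$ and the permutation degree $n=|G:H|$ are too big for a naive double-coset enumeration to terminate in reasonable time or memory; for example, the cases with socle $\mathrm{HN}$, $\mathrm{Suz}$ or $\mathrm{McL}$ push against these limits. Here I would either exploit the crude intermediate bound $\widetilde{Q}(G)$ of \eqref{e:Qtilde}, which avoids all \texttt{IsConjugate} calls, or else work with a carefully chosen set $S$ of double-coset representatives satisfying $\sum_{x\in S}|HxH|>|G|-|H|^2$, so that the regular suborbits are exactly those $x\in S$ with $|HxH|=|H|^2$, as explained in Section~\ref{ss:comp}. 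Constructing $H$ itself can also be delicate for the very largest sporadic groups, in which case I would identify $H$ as the normaliser $N_G(K)$ of a suitable $p$-subgroup $K$ rather than via \texttt{MaximalSubgroups}. With $r$ (equivalently $Q(G)$) pinned down in every case, the proposition follows.
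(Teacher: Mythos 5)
Your proposal follows essentially the same two-stage strategy as the paper: first compute $\what{Q}(G)$ exactly from the character tables and fusion maps in the \textsf{GAP} Character Table Library to dispose of most cases, then determine $Q(G)$ precisely via double-coset enumeration in \textsc{Magma} for the survivors (with ad hoc constructions of $H$ for the largest groups such as ${\rm HN}$). The only detail worth noting is that the paper treats $\mathbb{B}$ and $\mathbb{M}$ separately because the maximal subgroup classification for $\mathbb{M}$ is incomplete, but since the relevant soluble maximal subgroups are pinned down in the cited classification this does not affect the correctness of your argument.
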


\begin{proof}
First recall that the maximal subgroups of $G$ have been classified up to conjugacy, with the exception of the Monster group $\mathbb{M}$, where the problem of determining all the almost simple maximal subgroups is still open. In particular, the possibilities for $(G,H)$ are known and \cite{Wilson} is a convenient reference. In addition, the groups with $b(G) \geqs 3$ are listed in \cite[Table 4]{Burness2020base}.
 
First assume $G$ is not the Baby Monster $\mathbb{B}$ nor the Monster $\mathbb{M}$. In the remaining cases we first use the \textsf{GAP} Character Table Library \cite{GAPCTL} to identify the relevant groups with $\what{Q}(G) \geqs 1/4$. Indeed, in each case the character table of $G$ is available in \cite{GAPCTL} and we can use the \texttt{Maxes} function to access the character table of the maximal subgroup $H$. Moreover, \cite{GAPCTL} also stores the fusion map from $H$-classes to $G$-classes and this allows us to compute precise fixed point ratios and subsequently determine the exact value of $\what{Q}(G)$.

This reduces the problem to a small number of cases that require further attention. To handle these groups, we adopt the method described in Section \ref{ss:comp} to compute $Q(G)$ precisely. First we use the function \texttt{AutomorphismGroupSimpleGroup} to construct $G$ as a permutation group and we obtain $H$ via the \texttt{MaximalSubgroups} function (for $G_0 = {\rm HN}$ and $H \cap G_0 = 5^{1+4}.2^{1+4}.5.4$ we construct $H$ using the generators given in the Web Atlas \cite{WebAt}). Next we use \texttt{DoubleCosetRepresentatives} to construct a complete set $R$ of $(H,H)$ double coset representatives and this allows us to calculate $Q(G)$ via \eqref{e:QG} and \eqref{e:reg} (we thank Eamonn O'Brien for his assistance with this computation in the special case where $G_0 = {\rm HN}$ and $H \cap G_0 = 5^{1+4}.2^{1+4}.5.4$). In this way, we can read off the groups with $Q(G) \geqs 1/4$ and they are recorded in Table \ref{tab:random}.

Finally, suppose $G = \mathbb{B}$ or $\mathbb{M}$. If $G = \mathbb{B}$ then $H = [3^{11}].(S_4 \times 2S_4)$ or $47{:}23$; in both cases we can use \cite{GAPCTL} and the \texttt{Maxes} function as above to show that $\what{Q}(G)<1/4$. Similarly, if $G = \mathbb{M}$ then $H = 13^{1+2}{:}(3 \times 4S_4)$ or $41{:}40$ and once again we can use \cite{GAPCTL} to verify the bound $\what{Q}(G)<1/4$ (here we use \texttt{NamesOfFusionSources} in place of \texttt{Maxes} since the latter is not available for $\mathbb{M}$).
\end{proof}

\subsection{Exceptional groups}\label{s:excep}

Next let us turn to the groups in $\mathcal{G}\setminus \mathcal{L}$ where $G_0$ is an exceptional group of Lie type over $\mathbb{F}_q$ with $q=p^f$ for a prime $p$. As noted in the proof of \cite[Proposition 7.1]{Burness2020base}, the condition $b(G) = 2$ implies that $H$ is a maximal rank subgroup (that is, $H$ contains a maximal torus of $G$). More precisely, either $H = N_G(T)$ for some maximal torus $T$ of $G_0$ (see \cite[Table 5.2]{MaxExceptional}), or $(G,H)$ is one of the cases recorded in Table \ref{tab:ex}. Recall that $G_0 \ne {}^2G_2(3)' \cong {\rm L}_{2}(8)$ since $G \in \mathcal{G}\setminus \mathcal{L}$.

{\small
\begin{table}
\[
\begin{array}{lll} \hline
& G_0 & \mbox{Type of $H$} \\ \hline
{\rm (a)} & G_2(3) & {\rm SL}_2(3)^2 \\ 
{\rm (b)} & {}^3D_4(2) & 3\times {\rm SU}_3(2) \\
{\rm (c)} & {}^2F_4(2)' & {\rm SU}_3(2) \\
{\rm (d)} & F_4(2) & {\rm SU}_3(2)^2 \\
{\rm (e)} & {}^2E_6(2) & {\rm SU}_3(2)^3 \\ 
{\rm (f)} & E_8(2) & {\rm SU}_3(2)^4 \\ \hline
\end{array}
\]
\caption{The groups in $\mathcal{G} \setminus \mathcal{L}$, $G_0$ exceptional, $H \ne N_G(T)$}
\label{tab:ex}
\end{table}}

\begin{lem}\label{l:ex1}
The conclusion to Theorem \ref{t:random} holds if $G_0$ is an exceptional group of Lie type and $H$ is the normaliser of a maximal torus.
\end{lem}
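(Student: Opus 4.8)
The plan is to bound $\what{Q}(G)$ from above using Lemma \ref{l:calc}, exactly as in the alternating case, by controlling two quantities: an upper bound $A$ on $\sum_i |x_i^G \cap H|$ over the relevant prime-order classes, and a lower bound $B$ on the class sizes $|x_i^G|$. Since $H = N_G(T)$ is the normaliser of a maximal torus, every element of prime order in $H$ lies either in the torus $T$ or maps to an element of prime order in the quotient $N_G(T)/T$, which is (essentially) the Weyl group $W$. Thus $|H|$ is bounded above by $|T| \cdot |W|$ (times a small factor for field/graph automorphisms when $G \ne G_0$), and this gives a crude but serviceable value $A \leqs |H|$ for the sum of intersection sizes, since trivially $\sum_i |x_i^G \cap H| \leqs |H|$.

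\textbf{Key steps.}
First I would fix notation: write $q = p^f$, let $G_0$ be one of the finitely many families of exceptional Lie type, and recall from \cite{MaxExceptional} the list of maximal tori $T$ and their normalisers in each type. Second, for each fixed type of $G_0$ I would extract from \cite[Table 5.2]{MaxExceptional} the precise orders $|T|$ (polynomials in $q$) and the order $|W|$ of the relevant Weyl group factor, yielding $|H| \leqs c \cdot |T| \cdot |W|$ for an explicit constant $c$ accounting for the outer part $G/G_0$. Third, I would obtain a lower bound $B$ on the smallest prime-order class size $|x^G|$; here the key input is a fixed-point-ratio or centraliser estimate of the shape $|x^G| \geqs |G|/|C_G(x)|$, using that any non-central prime-order element of an exceptional group has centraliser of bounded dimension, so $|x^G|$ grows like a fixed positive power of $q$ (the minimal class sizes in exceptional groups are well documented and grow at least like $q^{d}$ for an explicit $d$ depending only on the type). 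Fourth, I would apply Lemma \ref{l:calc} to conclude $\what{Q}(G) \leqs B(A/B)^2 = A^2/B$, and verify that $A^2/B < 1/4$ for all but finitely many $q$, since $A^2$ is polynomial in $q$ of degree roughly $2(\dim T)$ while $B$ grows faster once $q$ is large. The finitely many small-$q$ exceptions (for instance $G_2(3)$, $^2F_4(2)'$, $^2B_2(8)$ and their extensions, which already appear in Table \ref{tab:random}) would then be settled computationally via the methods of Section \ref{ss:comp}.

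\textbf{The main obstacle.}
The hard part will be making the degree comparison between $A^2$ and $B$ tight enough that the genuinely exceptional small cases match the entries already recorded in Tables \ref{tab:random} and \ref{tab:random2}, rather than an inflated list. The crude bound $A \leqs |H|$ overcounts badly, because most of $H$ consists of regular torus elements whose $G$-classes are very large, so their contribution to $\what{Q}(G)$ is negligible; conversely the dangerous elements are the few involutions and small-order semisimple or unipotent elements with relatively small centralisers in $G$ but large intersection $|x^G \cap H|$. I anticipate that a uniform application of Lemma \ref{l:calc} with $A = |H|$ will suffice to clear all but a short explicit list of $(G_0, q)$ pairs, and for the borderline families (tori giving $H$ of maximal possible order, such as those with $|H| \approx (q+1)^{\ell}|W|$) I would refine the argument by splitting the prime-order elements according to whether they are semisimple or unipotent and bounding the two parts separately. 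Handling field and graph automorphisms when $G > G_0$ adds bookkeeping but does not change the asymptotic comparison, since such outer elements have large $G$-classes; these contribute lower-order terms and are absorbed into the constant $c$.
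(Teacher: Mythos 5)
Your overall strategy --- bound $\what{Q}(G)$ via Lemma \ref{l:calc}, show the resulting estimate beats $1/4$ for large $q$, and finish the residual small cases by computer --- is exactly the paper's, and the exceptions you would find within the scope of this lemma (${}^2B_2(8)$, ${}^2F_4(2)'$ and their extensions) are the right ones. One small slip: $G_2(3)$ with $H$ of type ${\rm SL}_2(3)^2$ is not a torus normaliser; it belongs to Table \ref{tab:ex} and is treated in Proposition \ref{p:ex}, not here.

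There is, however, one step that would fail as written. You propose to clear all but a short list with the crude choice $A=|H|$ and then settle the leftovers ``computationally via the methods of Section \ref{ss:comp}''. The crude bound fails far more often than your list suggests: for every torus normaliser in $E_6^{\e}(q)$ with $q\leqs 4$, for $F_4(2).2$, for $G_2(9)$ and $G_2(27)$, and for ${}^3D_4(q)$ with $q$ small, one has $|H|^2\gg B$ where $B$ is the minimal class size (for instance in $E_6(2).S_3$ one has $|H|>2^{20}$ while some inner involutions have $|x^G|$ only a little over $2^{21}$). Worse, for the largest of these borderline groups --- notably $E_6(2).S_3$ with $H=(7)^3.3^{1+2}.{\rm SL}_2(3).S_3$ --- the direct methods of Section \ref{ss:comp} (double coset enumeration, or computing class fusion inside $G$) are not feasible either. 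The missing idea is the hybrid argument the paper uses: construct $H$ explicitly by computer (here inside $E_7(8)$), count exactly the elements of each prime order in $H$, split them by $G$-class type (inner involutions, graph involutions, field automorphisms, elements of odd order), and pair each count with a purely theoretical lower bound on $|x^G|$, so that no computation in $G$ itself is required. Your proposed refinement of splitting semisimple from unipotent elements points in this direction but is not fine enough: the decisive splits are by type of outer automorphism, together with ad hoc observations such as the fact that $H$ contains no long root elements when $G_0=E_6(3)$. Finally, note that the paper does not rederive the large-$q$ estimates at all; it imports the already-refined upper bounds on $\what{Q}(G)$ from the proofs in \cite{ExtremelyPrimitive} and \cite{CameronConjecture}, which is what keeps the residual list short in the first place.
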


\begin{proof}
The possibilities for $H$ are recorded in \cite[Table 5.2]{MaxExceptional} and \cite[Proposition 4.2]{ExtremelyPrimitive} states that $b(G)=2$ whenever $H$ is the normaliser of a maximal torus (soluble or otherwise). We proceed by carefully inspecting the proof of \cite[Proposition 4.2]{ExtremelyPrimitive} in the relevant cases with $H$ soluble. 

If $G_0 = E_8(q)$ then one checks that the bound on $\what{Q}(G)$ in the proof of \cite[Lemma 4.3]{ExtremelyPrimitive} is sufficient and we note that $H$ is insoluble when $G_0 = E_7(q)$. 

Next assume $G_0 = E_6^{\e}(q)$. Here the proof of \cite[Lemma 4.11]{ExtremelyPrimitive} yields $\what{Q}(G)<q^{-1}$ if $q \geqs 5$, so we may assume $q \leqs 4$ and 
\[
N_L(H_0) = (q^2 +\e q +1)^3.3^{1+2}.{\rm SL}_{2}(3),
\]
where $H_0 = H \cap G_0$, $L = {\rm Inndiag}(G_0)$ and $(q,\e) \ne (2,-)$. One checks that the upper bound on $\what{Q}(G)$ presented in the proof of \cite[Lemma 4.11]{ExtremelyPrimitive} is sufficient unless $\e=+$ and $q \leqs 3$. If $q=3$ then $H$ does not contain any long root elements (see \cite[Corollary 2.13]{ExtremelyPrimitive}, for example) and by bounding the contribution to $\what{Q}(G)$ from the remaining elements of prime order, as in the proof of \cite[Lemma 4.11]{ExtremelyPrimitive}, we deduce that $\what{Q}(G)<1/4$. 

Now suppose $(q,\e) = (2,+)$. As explained in the proof of \cite[Lemma 4.11]{ExtremelyPrimitive}, we can use {\sc Magma} to construct $H$ as a subgroup of $E_7(8)$ (see \cite[Example 1.11]{BTh_comp} for the details). Let $i_m(H)$ denote the number of elements in $H$ of order $m$. If $x \in H$ has odd prime order then $|x^G|>2^{31}=b_1$ and we calculate that $i_3(H) = 11438$ and $i_7(H) = 342$, so Lemma \ref{l:calc} implies that the contribution to $\what{Q}(G)$ from elements of odd prime order is less than $a_1^2/b_1$, where $a_1 = 11780$. Now assume $x \in H$ is an involution. We find that $H_0$ contains $a_2 = 441$ involutions, so the contribution from these elements is less than $a_2^2/b_2$, where $b_2 = 2^{21}$. Similarly, there are $a_3=406$ involutions $x \in H_0.2 \setminus H_0$, each of which acts on $G_0$ as a graph automorphism. Therefore $|x^G|>\frac{1}{3}2^{25}=b_3$ and we conclude that
\begin{equation}\label{e:q14}
\what{Q}(G) < \sum_{i=1}^3a_i^2/b_i < \frac{1}{4}.
\end{equation}

Next assume $G_0 = F_4(q)$, so $q$ is even and $G$ contains graph automorphisms (see \cite[Table 5.2]{MaxExceptional}). By applying the bounds on $\what{Q}(G)$ in the proof of \cite[Lemma 4.15]{ExtremelyPrimitive} we immediately reduce to the case $q=2$. Here $G = F_4(2).2$ and $H = 7^2{:}(3 \times 2.S_4)$ is the normaliser of a Sylow $7$-subgroup of $G$. The upper bound on $\what{Q}(G)$ in the proof of \cite[Lemma 4.15]{ExtremelyPrimitive} is larger than $1/2$, but we can use {\sc Magma} to construct $G$ and $H$ as permutation groups of degree $139776$ (more precisely, we use \texttt{AutomorphismGroupSimpleGroup} to construct $G$ and we find $H$ by taking the normaliser of a Sylow $7$-subgroup). Then by considering the fusion of $H$-classes in $G$ we calculate that 
\[
\what{Q}(G) = \frac{541861}{29328998400}
\]
and the result follows. 

Now suppose $G_0 = G_2(q)$, so $p=3$, $q \geqs 9$ and $G$ contains graph automorphisms (see \cite[Table 5.2]{MaxExceptional}). By arguing as in the proof of \cite[Lemma 4.21]{ExtremelyPrimitive} we reduce to the cases $q \in \{9,27\}$. Suppose $q=27$ and note that $|H_0| \leqs 12(q+1)^2 = a_1$. Let $x \in H$ be an element of prime order. If $x \in H_0$ then $|x^G| \geqs q^3(q^3-1)(q+1)=b_1$ (as noted in the proof of \cite[Lemma 4.21]{ExtremelyPrimitive}), whereas if $x$ is a field automorphism then $|x^G|>q^{28/3}=b_2$ and $H$ contains at most $a_2=24(q+1)^2$ such elements. Similarly, if $x$ is an involutory graph automorphism then $|x^G| =  q^3(q^3-1)(q+1)=b_3$ and there are at most $a_3=12(q+1)^2$ such elements in $H$.  It is straightforward to check that \eqref{e:q14} holds. Finally, suppose $q=9$. First we use {\sc Magma} to construct $G = G_2(9).4 = {\rm Aut}(G_0)$ as a permutation group of degree $132860$ and we note that $H = N_G(K)$, where $K$ is either a Sylow $\ell$-subgroup of $G_0$ (with $\ell \in \{5,13,73\}$) or $K = C_8 \times C_8$. In each case, it is straightforward to construct $H$ and verify the bound $\what{Q}(G)<1/4$. 

To complete the proof of the lemma, we may assume $G_0$ is one of the twisted groups ${}^3D_4(q)$, ${}^2F_4(q)'$, ${}^2G_2(q)$ ($q \geqs 27$) or ${}^2B_2(q)$. First assume $G_0 = {}^3D_4(q)$, in which case there are three possibilities for $H$ and one checks that the bound on $\what{Q}(G)$ in the proof of \cite[Lemma 4.24]{ExtremelyPrimitive} is sufficient if $q \geqs 9$. Suppose $q=8$ and let $x \in H$ be an element of prime order, which implies that $x \in H_0.3$. Then $|x^G|>8^{14}=b_1$ and $3|H_0| \leqs 383688 = a_1$, whence $\what{Q}(G)<a_1^2/b_1<1/4$. The same conclusion holds when $q=7$ since $|H| \leqs 233928$ and $|x^G|>7^{14}$ for all $x \in H$ of prime order. The remaining groups with $q \leqs 5$ can be handled using {\sc Magma}. In each case, we can use \texttt{AutomorphismGroupSimpleGroup} to construct $G$ and we obtain $H$ as the normaliser in $G$ of an appropriate Sylow $\ell$-subgroup of $G_0$. For example, if $q=5$ then the three possibilities for $H$ correspond to the primes $\ell \in \{7,31,601\}$. In every case, it is straightforward to verify the bound $\what{Q}(G)<1/4$.

Next assume $G_0= {}^2F_4(q)'$. The case $q=2$ can be checked using {\sc Magma} and we note that $Q(G)>1/4$ when $H \cap G_0 = 5^2{:}4A_4$ (as recorded in Table \ref{tab:random}). For $q \geqs 8$, the upper bound on $\what{Q}(G)$ in the proof of \cite[Lemma 4.26]{ExtremelyPrimitive} is sufficient (note that in the upper bound on $|H|$ given in the proof of this lemma, the $2\log q$ factor can be replaced by $|{\rm Out}(G_0)| = \log q$). The case $G_0 = {}^2G_2(q)$ is very similar. Indeed, if $q \geqs 3^5$ then the upper bound on $\what{Q}(G)$ in the proof of \cite[Lemma 4.37]{CameronConjecture} is good enough, while the case $q = 27$ can be handled using {\sc Magma}, noting that $H=N_G(K)$ with $K$ a Sylow $\ell$-subgroup of $G_0$ for $\ell \in \{7,19,37\}$. Finally, let us assume $G_0 = {}^2B_2(q)$. If $q \geqs 2^9$ then the bounds in the proof of \cite[Lemma 4.39]{CameronConjecture} are good enough. If $q=2^7$ and $x \in H$ is a field automorphism of order $7$ then $|x^G|>q^4$ and by arguing as in the proof of \cite[Lemma 4.39]{CameronConjecture} we deduce that $\what{Q}(G)<1/4$. The remaining cases with $q \in \{8,32\}$ can be checked using {\sc Magma} and we find that $Q(G)<1/4$ unless $q=8$ and $H \cap G_0 = 13{:}4$. The latter case is recorded in Table \ref{tab:random}.
\end{proof}

\begin{prop}\label{p:ex}
The conclusion to Theorem \ref{t:random} holds if $G_0$ is an exceptional group of Lie type.
\end{prop}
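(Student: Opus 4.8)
The plan is to reduce to Table \ref{tab:ex} and then dispatch its six cases. By the discussion preceding the proposition, the hypothesis $b(G) = 2$ forces $H$ to be a maximal rank subgroup, so either $H = N_G(T)$ for a maximal torus $T$ of $G_0$, or $(G,H)$ is one of the cases (a)--(f) in Table \ref{tab:ex}. The torus normaliser case is settled by Lemma \ref{l:ex1}, so it remains to verify the conclusion of Theorem \ref{t:random} for the groups in Table \ref{tab:ex}, where $H$ is of type $\SU_3(2)^k$ (or $\SL_2(3)^2$ when $G_0 = G_2(3)$), together with their almost simple overgroups lying in $\mathcal{G}$.

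For the three smallest socles $G_0 = G_2(3)$, ${}^3D_4(2)$ and ${}^2F_4(2)'$ (cases (a)--(c)), the groups are small enough to be handled directly with {\sc Magma} \cite{Magma}: I would construct $G$ and the maximal subgroup $H$ and compute $Q(G)$ exactly, following the double coset approach of Section \ref{ss:comp}. This should confirm that the only groups here with $Q(G) \geqs 1/4$ are the two $G_2(3)$ examples in case (a), which already appear in Table \ref{tab:random}; in the remaining cases one reads off $Q(G) < 1/4$.

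For the large socles $G_0 = F_4(2)$, ${}^2E_6(2)$ and $E_8(2)$ (cases (d)--(f)) I would argue via fixed point ratios, exploiting that $|\SU_3(2)| = 216 = 2^3 \cdot 3^3$, so every prime-order element of $H \cap G_0$ has order $2$ or $3$; prime-order elements in any outer cosets (graph, diagonal or field automorphisms) also have order $2$ or $3$. The key point is that the numbers $i_2(H)$ and $i_3(H)$ are small, since the count in $\SU_3(2)^k$ is governed by the corresponding counts in a single factor via $i_r(\SU_3(2)^k) = (1 + i_r(\SU_3(2)))^k - 1$, with only a modest extra contribution from the factor-permuting part of $H$. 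Grouping the contribution to $\what{Q}(G)$ by prime order and applying Lemma \ref{l:calc} then gives $\what{Q}(G) \leqs i_2(H)^2/B_2 + i_3(H)^2/B_3$, where $B_r$ is a lower bound on the smallest size of a $G$-class of order-$r$ elements. Since $p = 2$ the order-$2$ elements are unipotent, while the order-$3$ elements are semisimple with centraliser of bounded corank; in both cases these classes are very large (for instance, the minimal involution class in $E_8(2)$ has size of order $q^{58}$), and for $E_8(2)$ and ${}^2E_6(2)$ this comfortably yields $\what{Q}(G) \ll 1/4$. As in Lemma \ref{l:ex1}, the necessary class-size estimates can be extracted from the analysis of these same groups in \cite{ExtremelyPrimitive}.

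The main obstacle is twofold. The deepest case is $E_8(2)$ (case (f)), which is entirely beyond computation: here one must count the prime-order elements of $\SU_3(2)^4.A$ precisely and supply valid lower bounds on the minimal unipotent and semisimple class sizes, the saving grace being the enormous gap between $|H| \approx 216^4$ and the class sizes, which exceed $2^{50}$. More delicate is $F_4(2)$ (case (d)): here $\dim F_4 = 52$ is small, so the minimal unipotent class size is only of order $q^{16}$ and the crude bound $i_2(H)^2/B_2$ may fail to beat $1/4$. For this case I would instead compute the fusion of the (few) prime-order $H$-classes into $G$ exactly, using a faithful representation of $G$ in {\sc Magma}, exactly as was done for $F_4(2).2$ in the proof of Lemma \ref{l:ex1}; since most involutions of $H$ in fact fuse to large $G$-classes rather than the minimal one, the resulting value of $\what{Q}(G)$ is well below $1/4$.
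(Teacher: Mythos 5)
Your overall strategy coincides with the paper's: reduce to the six cases of Table \ref{tab:ex} via Lemma \ref{l:ex1}, dispatch (a)--(c) by direct computation in {\sc Magma}, handle $F_4(2)$ by an exact fusion computation (the paper does this via the construction in \cite[Example 1.4]{BTh_comp}), and treat the two largest cases by fixed point ratio estimates exploiting the fact that $2$ and $3$ are the only primes dividing $|H|$. For $E_8(2)$ your order-by-order bound does succeed: the paper instead isolates the long root elements, with $|x^G \cap H| = 36$ and $|x^G| > 2^{58}$, and uses $|H|^2/2^{92}$ for everything else, but either decomposition works because the relevant classes in $E_8(2)$ are so large.

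There is, however, a genuine gap in case (e). For $G_0 = {}^2E_6(2)$ the bound $\what{Q}(G) \leqs i_2(H)^2/B_2 + i_3(H)^2/B_3$, with $B_r$ the minimal size of a $G$-class of order-$r$ elements, does not come close to $1/4$. The minimal unipotent class of ${}^2E_6(2)$ (the long root elements) has dimension $22$, so $B_2$ is only of order $2^{22} \approx 4 \times 10^6$; meanwhile $i_2(H)$ is already in the thousands, since $i_2(\SU_3(2)^3) = 10^3 - 1$ and each coset of a factor-swapping involution contributes a further $|\SU_3(2)| \cdot (1 + i_2(\SU_3(2))) = 2160$ elements of order $2$. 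Thus $i_2(H)^2/B_2$ exceeds $1$ by a wide margin. The order-$3$ count is no better: the smallest semisimple class of order $3$ in ${}^2E_6(2)$ has size only about $2^{32}$, while the cosets of $3$-cycles in $H$ alone contain roughly $216^2$ elements of order $3$. One is forced to split the prime-order elements of $H$ by $G$-class rather than by order: the long root elements of $G$ lying in $H$ number only $3 \cdot 9 = 27$, and the many remaining involutions and order-$3$ elements fuse into much larger classes. This finer breakdown is precisely what the paper imports by citing the bounds in Case 1 of the proof of \cite[Lemma 4.12]{ExtremelyPrimitive}. Your proposed remedy for $F_4(2)$ --- computing the fusion of the prime-order $H$-classes into $G$ exactly --- is the right instinct, but it is ${}^2E_6(2)$, and not only $F_4(2)$, whose minimal classes are too small for the crude order-based estimate.
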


\begin{proof}
In view of the previous lemma, we may assume $G$ is one of the groups listed in Table \ref{tab:ex}.  In cases (a), (b) and (c) we can use {\sc Magma} to prove the result (we get $Q(G)<1/4$ in cases (b) and (c), while $Q(G) \geqs 1/4$ in (a)). In (d), the upper bound on $\what{Q}(G)$ in the proof of \cite[Lemma 4.16]{ExtremelyPrimitive} is insufficient. But as explained in \cite[Example 1.4]{BTh_comp}, we can use {\sc Magma} to construct $G$ and $H$ and then it is straightforward to verify the bound $\what{Q}(G)<1/4$.

Finally, let us consider cases (e) and (f). In (e) we observe that $2$ and $3$ are the only prime divisors of $|H|$ and we deduce that $\what{Q}(G)<1/4$ by applying the relevant bounds presented in Case 1 in the proof of \cite[Lemma 4.12]{ExtremelyPrimitive}. Similarly, in case (f) we note that the only prime divisors of $|H|$ are $2$ and $3$. If $x$ is a long root element, then 
\[
|x^G\cap H| = 4|y^L| = 36 = a_1, \;\; |x^G|>2^{58}=b_1,
\]
where $y$ is a long root element in $L = {\rm SU}_3(2)$. If not, then $|x^G| > 2^{92} = b_2$ and we note that $|H| = 104485552128 = a_2$. By applying Lemma \ref{l:calc} we deduce that  
\[
\what{Q}(G)< a_1^2/b_1 + a_2^2/b_2 < \frac{1}{4}
\]
and the result follows.
\end{proof}

In order to complete the proof of Theorem \ref{t:random}, we may assume $G_0$ is a classical group. It will be convenient to partition the proof into various subsections according to the socle $G_0$. The cases that we need to consider are recorded in the following result, which is an immediate consequence of \cite{SolubleStabiliser} and \cite[Theorem 2]{Burness2020base}. 

\begin{thm}\label{t:baseclass}
Let $G \leqs {\rm Sym}(\O)$ be a permutation group in $\mathcal{G} \setminus \mathcal{L}$ with socle $G_0$ classical and point stabiliser $H$. Then $(G,H)$ is one of the cases in Table $\ref{tab:cases}$.
\end{thm}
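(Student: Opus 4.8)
The plan is to establish Theorem \ref{t:baseclass} by extracting the classification of pairs $(G,H)$ directly from the existing literature, rather than by reproving it from scratch. The statement asserts that every group $G \in \mathcal{G} \setminus \mathcal{L}$ with classical socle $G_0$ and soluble point stabiliser $H$ gives a pair $(G,H)$ appearing in Table \ref{tab:cases}. Since membership in $\mathcal{G}$ already encodes two nontrivial conditions---that $H$ is a \emph{soluble} maximal subgroup and that $b(G)=2$---the natural approach is to intersect two previously established classifications: first the classification of soluble maximal subgroups of almost simple classical groups from \cite{SolubleStabiliser}, and second the determination in \cite[Theorem 2]{Burness2020base} of exactly which of these have base size $2$.

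First I would invoke \cite{SolubleStabiliser}, which enumerates (via Aschbacher's subgroup structure theorem \cite{asch}) all the possibilities for a soluble maximal subgroup $H$ of an almost simple group with classical socle $G_0$; this organises the candidate pairs according to the Aschbacher class $\mathcal{C}_i$ of $H$. Next I would impose the base-two condition: by \cite[Theorem 2]{Burness2020base}, the pairs $(G,H)$ among these with $b(G) \geqs 3$ are explicitly listed, so discarding exactly those leaves precisely the members of $\mathcal{G}$ with classical socle. The final bookkeeping step is to remove the pairs that belong to $\mathcal{L}$---namely those with $G_0 = {\rm L}_2(q)$ and $H$ of type ${\rm GL}_1(q)\wr S_2$ or ${\rm GL}_1(q^2)$, together with any permutation-isomorphic copies, which by \eqref{e:excl} also strips out the degenerate low-rank socles $A_5,A_6,{\rm L}_3(2),{\rm Sp}_4(2)',{}^2G_2(3)'$ and the small ${\rm L}_2(q)$ cases. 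What survives is then, by construction, exactly the list in Table \ref{tab:cases}, and I would state that this completes the proof.

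Since this is fundamentally a collation of two classification theorems rather than a genuinely new argument, the body of the proof is essentially a single sentence: the result is an immediate consequence of \cite{SolubleStabiliser} combined with \cite[Theorem 2]{Burness2020base}, after removing the families in $\mathcal{L}$. The real content has already been done upstream, so I would not reproduce the Aschbacher-class analysis here.

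The main obstacle is not mathematical depth but \emph{consistency and completeness of the cross-referencing}: one must ensure that the structural descriptions (the ``type of $H$'' notation), the conventions for labelling the various almost simple extensions $G_0.2_i$, $G_0.\langle\delta,\phi\rangle$, and so on, and the treatment of permutation-isomorphic duplicates, all match precisely between \cite{SolubleStabiliser}, \cite{Burness2020base}, and Table \ref{tab:cases}. The delicate point is accounting for every almost simple group $G$ with a fixed $(G_0,H)$-type---since a single type of maximal subgroup may or may not survive as maximal, and may or may not retain $b(G)=2$, across the different extensions $G$ with socle $G_0$ (as the remarks following Theorem \ref{t:main2} illustrate for ${\rm U}_4(3)$ and ${\rm L}_3(4)$). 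Verifying that Table \ref{tab:cases} faithfully records exactly these survivors, with no omissions or spurious entries, is where the care is required.
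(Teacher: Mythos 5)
Your proposal is correct and coincides with the paper's own justification: the paper states Theorem \ref{t:baseclass} as an immediate consequence of \cite{SolubleStabiliser} together with \cite[Theorem 2]{Burness2020base}, exactly the two classifications you intersect, followed by the same removal of the $\mathcal{L}$ families. Your additional remarks on the cross-referencing pitfalls (matching the ``type of $H$'' conventions and tracking which extensions $G_0.2_i$ etc.\ retain maximality and $b(G)=2$) accurately identify where the real care lies, and the paper likewise defers these details to \cite{Low-Dimensional,KleidmanLiebeckClassicalGroups} and \cite{Burness2020base}.
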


\begin{table}
\[
\begin{array}{lll} \hline
G_0 & \mbox{Type of $H$} & \mbox{Conditions} \\ \hline
{\rm L}_n(q) & {\rm GL}_1(q^n) & \mbox{$n \geqs 3$ prime, $G \ne {\rm L}_{3}(3).2$} \\
& {\rm GL}_{2}(q) \wr S_{n/2} & \mbox{$n \in \{6,8\}$, $q=3$} \\
& {\rm GL}_1(q)\wr S_n & n \in \{3,4\}, \, q \geqs 5 \\
& {\rm O}_4^+(q) & \mbox{$(n,q)=(4,3)$, $G\ne {\rm Aut}(G_0)$} \\
& {\rm O}_3(q) & (n,q) = (3,3) \\
& 3^{1+2}.{\rm Sp}_2(3) & \mbox{$n=3$, $p=q\equiv 1 \imod{3}$} \\
& {\rm GL}_{2}(3) & \mbox{$n=2$, $q=3^f$, $f \geqs 3$ prime} \\
& 2_{-}^{1+2}.{\rm O}_{2}^{-}(2) & \mbox{$n=2$, $q=p \geqs 11$} \\ 
\UU_n(q) & \GU_1(q^n) & \mbox{$n \geqs 3$ prime} \\
& \GU_{1}(q) \wr S_{n} & \mbox{$n \in \{3,4\}$, $q \geqs 3$, $(n,q) \ne (3,3)$} \\
& \GU_{2}(q) \wr S_{n/2} & \mbox{$n \in \{4,6,8\}$, $q=3$} \\
& \GU_{3}(q) \wr S_{n/3} & \mbox{$n \in \{9,12\}$, $q=2$} \\
& 3^{1+2}.\Sp_2(3) & \mbox{$n=3$, $q=p\equiv 2\imod{3}$} \\
& \GU_3(2) & \mbox{$n=3$, $q=2^f$, $f\geqs 3$ prime} \\
\PSp_n(q) & \Sp_2(q)\wr S_{n/2} & \mbox{$n \in \{6,8\}$, $q=3$} \\ 
& {\rm O}_2^\epsilon(q)\wr S_2 & \mbox{$(n,p)=(4,2)$, $q \geqs 4$} \\
& {\rm O}_{2}^{-}(q^2) & \mbox{$(n,p)=(4,2)$, $q \geqs 4$} \\ 
\POmega_{n}^+(q) & {\rm O}_4^+(q)\wr S_{n/4} & \mbox{$n \in \{12,16\}$, $q=3$} \\ 
& {\rm O}_4^+(q) \wr S_2 & \mbox{$(n,q)=(8,3)$, $|G:G_0|<6$} \\
& {\rm O}_2^{\e}(q)\wr S_4 & \mbox{$n=8$, $q \geqs 3$} \\
& {\rm O}_2^{-}(q) \times \GU_3(q) & \mbox{$(n,q)=(8,2)$, $G=G_0.3$} \\
& {\rm O}_2^-(q^2) \times {\rm O}_2^-(q^2) & n = 8 \\
\O_n(q) & {\rm O}_4^+(q) \perp {\rm O}_3(q) & (n,q) = (7,3) \\ \hline
\end{array}
\]
\caption{The groups in $\mathcal{G}\setminus \mathcal{L}$ with $G_0$ classical}\label{tab:cases}
\end{table}

Note that in the final column of Table \ref{tab:cases} we list necessary conditions for the existence of a group  in $\mathcal{G}\setminus \mathcal{L}$ with the given socle and point stabiliser. In general, these conditions are not sufficient and  we refer the reader to \cite{Low-Dimensional,KleidmanLiebeckClassicalGroups} and \cite{Burness2020base}  for a precise description of the conditions that are needed to ensure that $G$ has a maximal subgroup of the given type and a base of size $2$. We also refer the reader to \cite[Chapter 3]{BG_book} for detailed information on the conjugacy classes of elements of prime order in $G$.

\subsection{Linear groups}\label{ss:lin}

In this section we assume $G_0 = {\rm L}_{n}(q)$. Recall that the condition $G \not\in \mathcal{L}$ implies that $q \geqs 11$ if $n=2$, and $q \geqs 3$ if $n=3$. 

\begin{prop}\label{p:lin}
The conclusion to Theorem \ref{t:random} holds if $G_0 = {\rm L}_{n}(q)$.
\end{prop}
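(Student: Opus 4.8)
The plan is to prove Proposition \ref{p:lin} by partitioning the analysis according to the type of point stabiliser $H$ listed for $G_0 = \operatorname{L}_n(q)$ in Table \ref{tab:cases}, since each type requires a different bound on the relevant conjugacy class data. For each type I would invoke the general estimate $Q(G) \leqs \what{Q}(G) = \sum_i |x_i^G| \cdot \fpr(x_i)^2$ and aim to show $Q(G) < 1/4$ outside the finite list of exceptions recorded in Table \ref{tab:random2}. The essential input in every case is a pair of inequalities: an \emph{upper} bound $A$ on $\sum_i |x_i^G \cap H|$ (equivalently, control on the number of elements of each prime order inside $H$) and a \emph{lower} bound $B$ on the class size $|x_i^G|$ for each prime-order element $x_i$. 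Lemma \ref{l:calc} then yields the clean estimate $\what{Q}(G) \leqs B(A/B)^2 = A^2/B$, so the whole problem reduces to verifying $A^2/B < 1/4$, i.e. $4A^2 < B$, whenever $q$ (and $n$) are large enough.

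The key steps, carried out type by type, are as follows. For the field-extension stabilisers $H$ of type $\operatorname{GL}_1(q^n)$ with $n$ prime, $H$ is metacyclic of order roughly $(q^n-1)n/(q-1)$, so $|H|$ is small relative to $|G|$; here the dominant contribution comes from elements of order dividing $q-1$ and from field/graph automorphisms, and one checks $\what{Q}(G)$ decays like a negative power of $q$. For the imprimitive stabilisers of type $\operatorname{GL}_1(q) \wr S_n$ (with $n \in \{3,4\}$, $q \geqs 5$) and $\operatorname{GL}_2(q) \wr S_{n/2}$, the order of $H$ is polynomial in $q$ of bounded degree while the minimal class size $B$ grows faster, so the same $A^2/B$ estimate closes the generic case; the small values of $q$ spill over into the exceptional table. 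For the remaining ``sporadic'' geometric stabilisers — types $\operatorname{O}_4^+(q)$, $\operatorname{O}_3(q)$, $3^{1+2}.\operatorname{Sp}_2(3)$, $\operatorname{GL}_2(3)$ and $2_-^{1+2}.\operatorname{O}_2^-(2)$ — the field $q$ is essentially bounded (or the defining condition forces a restricted range), and these are either handled directly by the computational methods of Section \ref{ss:comp} or by an explicit count of prime-order elements, again feeding into Lemma \ref{l:calc}. Throughout I would use \cite[Chapter 3]{BG_book} to enumerate the classes of prime-order elements and their sizes, and track field and graph automorphisms separately since these contribute elements with comparatively large fixed point ratios.

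The main obstacle, and the step demanding the most care, is obtaining a sufficiently sharp lower bound $B$ on the class sizes of \emph{involutions} and other small-order elements when $G$ contains graph or graph-field automorphisms, because an involutory graph automorphism of $\operatorname{L}_n(q)$ can have an unusually large centraliser (of type $\operatorname{PSp}_n(q)$ or an orthogonal group), making $|x^G|$ relatively small and hence inflating its contribution to $\what{Q}(G)$. In these borderline cases the crude single-parameter bound $A^2/B$ is too lossy, so I would instead split the sum over prime orders — bounding the semisimple, unipotent, and automorphism contributions separately via several $(A_i, B_i)$ triples and summing $\sum_i A_i^2/B_i$, exactly as in the pattern $\what{Q}(G) < \sum_i a_i^2/b_i < 1/4$ used repeatedly in Lemma \ref{l:ex1}. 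The finitely many residual cases where even this refined estimate gives $\what{Q}(G) \geqs 1/4$ are precisely the linear-group entries of Table \ref{tab:random2}; for those I would compute $Q(G)$ exactly via the double-coset method of \eqref{e:reg} to confirm the listed values of $r$ and $Q(G)$, thereby completing the verification that these are the only genuine exceptions.
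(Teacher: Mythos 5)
Your proposal follows essentially the same route as the paper: fixed-point-ratio estimates via Lemma \ref{l:calc} (with the sum split into several $(a_i,b_i)$ pairs to control graph and field automorphisms) to force $\what{Q}(G)<1/4$ for large $q$, {\sc Magma} computations for the small residual cases, and the double-coset method to pin down the exact values of $r$ and $Q(G)$ for the entries of Table \ref{tab:random2} — the paper simply imports the asymptotic bounds ready-made from the proofs in \cite{Burness2020base} rather than re-deriving them from \cite{BG_book}. The only slip is your claim that $q$ is essentially bounded for the types $3^{1+2}.{\rm Sp}_2(3)$, ${\rm GL}_2(3)$ and $2_{-}^{1+2}.{\rm O}_{2}^{-}(2)$ (it is not — $q$ ranges over an infinite set in each case), but since $|H|$ is absolutely bounded there your $A^2/B$ fallback still closes those cases.
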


\begin{proof}
First assume $n$ is a prime and $H$ is of type ${\rm GL}_{1}(q^n)$. By applying the upper bound on $\what{Q}(G)$ in the proof of \cite[Lemma 6.4]{Burness2020base} we immediately reduce to the cases where  $(n,q) = (7,2)$, or $n=5$ and $q \leqs 5$, or $n=3$ and $q \leqs 19$. With the aid of {\sc Magma}, it is straightforward to compute $Q(G)$ precisely in each of these cases and the result quickly follows (note that the condition $b(G)=2$ implies that $G \ne {\rm L}_{3}(3).2$). In particular, we find that $Q(G) \geqs 1/4$ only if $n = 3$ and $q \leqs 5$ (the precise exceptions are recorded in Table \ref{tab:random2}).

Next assume $n \in \{3,4\}$, $q \geqs 5$ and $H$ is of type ${\rm GL}_1(q)\wr S_n$. First assume $n=3$. By  inspecting the proof of \cite[Lemma 6.5]{Burness2020base} we deduce that $\what{Q}(G)<1/4$ if $q \geqs 43$. If $29 \leqs q \leqs 41$ then $G$ does not contain field automorphisms of order $2$ or $3$, nor graph-field automorphisms of order $2$, so we may set $a_8 = a_9=0$ in the bound on $\what{Q}(G)$ presented in the proof of  \cite[Lemma 6.5]{Burness2020base}. One checks that this modified bound yields $\what{Q}(G)<1/4$. For $7 \leqs q \leqs 27$ we can use {\sc Magma} to show that $Q(G)<1/4$ in the usual manner, with the single exception of the case $G = \Aut(G_0)$ with $q=9$, where $Q(G) = 4093/12285$. Finally, for $q=5$ we calculate that $Q(G) = 199/775$ if $G = G_0$, otherwise $Q(G) = 1379/3875$; both cases are recorded in Table \ref{tab:random2}. Similarly, if $n=4$ then the result follows by combining explicit {\sc Magma} computations for $q \in \{5,7,8\}$ with the upper bound on $\what{Q}(G)$ presented in the proof of \cite[Lemma 6.6]{Burness2020base} for $q \geqs 9$ (in every case we get $Q(G)<1/4$). The case where $n=3$ and $H$ is of type $3^{1+2}.{\rm Sp}_2(3)$ is entirely similar, working with the bound on $\what{Q}(G)$ in the proof of \cite[Lemma 6.11]{Burness2020base}. 

Now let us turn to the relevant groups with $G_0 = {\rm L}_{2}(q)$. If $q=3^f$ with $f \geqs 3$ a prime, then the bound on $\what{Q}(G)$ in the proof of \cite[Lemma 4.9]{Burness2020base} is sufficient if $f \geqs 7$, while the cases $f \in \{3,5\}$ are easily checked using {\sc Magma}. Similarly, if $q=p \geqs 11$ and $H$ is of type $2^{1+2}_{-}.{\rm O}_{2}^{-}(2)$ then the bound in the proof of \cite[Lemma 4.10]{Burness2020base} is good enough if $q \geqs 71$ and we can use {\sc Magma} to handle the cases with $q<71$.

There are four remaining cases to consider. If $G_0 = {\rm L}_3(3)$ with $H$ of type ${\rm O}_3(3)$ then we compute $Q(G) = 19/39$ if $G = G_0$, whereas $Q(G) = 23/39$ for $G = G_0.2$. Next suppose $G_0 = {\rm L}_{4}(3)$ and $H$ is of type ${\rm O}_{4}^{+}(3)$. Here the condition $b(G) = 2$ implies that $G \ne {\rm Aut}(G_0)$ and using {\sc Magma} one checks that $Q(G)>1/4$ in each case (the precise value of $Q(G)$ is recorded in Table \ref{tab:random2}). The case $G_0 = {\rm L}_{6}(3)$ with $H$ of type ${\rm GL}_{2}(3) \wr S_3$ can be handled using {\sc Magma}, working with the function \texttt{MaximalSubgroups} to construct $H$. Finally, suppose $G_0 = {\rm L}_{8}(3)$ and $H$ is of type ${\rm GL}_{2}(3) \wr S_4$. Here the \texttt{MaximalSubgroups} function is ineffective, but we can construct $H$ by observing that $H = N_G(K)$ with $|K|=2^{11}$, as noted in the proof of \cite[Proposition 6.3]{Burness2020base} (also see \cite[Example 2.4]{Burness2020base}). It is straightforward to check that $\what{Q}(G)<1/4$.
\end{proof}

\subsection{Unitary groups}\label{ss:unit}

\begin{prop}\label{p:unit}
The conclusion to Theorem \ref{t:random} holds if $G_0 = {\rm U}_{n}(q)$ with $n \geqs 3$.
\end{prop}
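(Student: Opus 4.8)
The plan is to follow exactly the template established in Propositions \ref{p:lin} and the linear case, adapting it to the unitary groups recorded in Table \ref{tab:cases}. I would partition the argument according to the type of $H$, since each family requires a slightly different balance between the probabilistic bound from \cite{Burness2020base} (which dispatches all sufficiently large $q$) and explicit \textsc{Magma} computation (which handles the finitely many small cases that survive). The five families to treat are: $H$ of type $\GU_1(q^n)$ with $n$ prime; $H$ of type $\GU_1(q)\wr S_n$ with $n\in\{3,4\}$; the reducible-field cases $\GU_2(q)\wr S_{n/2}$ and $\GU_3(q)\wr S_{n/3}$ with $q$ small; and the two sporadic-parameter $\mathcal{C}_6$-type cases $3^{1+2}.\Sp_2(3)$ and $\GU_3(2)$.

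First I would handle the semilinear torus-normaliser case $H$ of type $\GU_1(q^n)$, $n\geqs 3$ prime. Here $|H|$ is roughly $(q^n+1)\cdot 2n$ while every element of prime order has centraliser small relative to $|G_0|$, so the bound on $\what{Q}(G)$ in the relevant lemma of \cite{Burness2020base} should give $\what{Q}(G)<1/4$ for all but a short list of small $(n,q)$; those residual cases (for instance $\UU_3(q)$ with $q$ small, and perhaps $\UU_5(2)$ or $\UU_7(2)$) I would settle directly in \textsc{Magma} by computing $Q(G)$ exactly via the fusion of $H$-classes, reading off the exceptions in Tables \ref{tab:random} and \ref{tab:random2}. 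The $\GU_1(q)\wr S_n$ imprimitive-type cases proceed identically: the asymptotic bound reduces to small $q$, and I would compute $Q(G)$ for each $G\leqs\Aut(G_0)$ over the offending range, recording the numerous $\UU_3$ and $\UU_4$ exceptions that appear in Table \ref{tab:random2}. The cases with $H$ of type $\GU_2(q)\wr S_{n/2}$ (so $q=3$, $n\in\{4,6,8\}$) and $\GU_3(q)\wr S_{n/3}$ (so $q=2$, $n\in\{9,12\}$) have bounded rank and a single fixed field, so these are finite in number and amenable to \textsc{Magma}, though for the larger ranks I would likely need to construct $H$ as $N_G(K)$ for a suitable $p$-subgroup $K$ rather than via \texttt{MaximalSubgroups}, exactly as in the $\mathrm{L}_8(3)$ case. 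The two $\mathcal{C}_6$-type families are each a single small group (or a handful of overgroups) and pose no difficulty computationally.

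The main obstacle I anticipate is not any single case but the bookkeeping for $\UU_3(q)$ and $\UU_4(q)$ at small $q$, where several distinct almost simple overgroups $G$ with the same socle and type of $H$ each yield $Q(G)\geqs 1/4$, and where the structure of $H$ (and even whether $H$ is maximal with $b(G)=2$) depends delicately on which diagonal, field and graph automorphisms $G$ contains. For example, the $\UU_4(3)$ entries require distinguishing the groups $G_0.2$, $G_0.2^2$ and $G_0.4$ of type $\GU_1(3)\wr S_4$, and only some of these have $b(G)=2$; similarly the $\UU_3(5)$ and $\UU_3(8)$ rows involve multiple overgroups. I would therefore be careful, following Remark \ref{r:q14} and the conventions of \cite{Low-Dimensional}, to identify each $G$ by the automorphisms it contains before computing $Q(G)$, and to cross-check against the base-size classification in \cite[Table 2]{Burness2020base} so that only genuine members of $\mathcal{G}\setminus\mathcal{L}$ are retained. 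A secondary technical point is that for the highest-rank unitary cases ($\UU_4(4)$, $\UU_4(5)$, and the $\GU_2(3)\wr S_2$ case for $\UU_4(3)$) the degree $|G:H|$ can be large enough that computing $Q(G)$ exactly via \texttt{DoubleCosetRepresentatives} may be costly; there I would fall back on bounding $\what{Q}(G)$ precisely from the fusion data, which suffices to decide whether $Q(G)\geqs 1/4$ and to record the correct value of $r$ using \eqref{e:QG} and \eqref{e:reg}.
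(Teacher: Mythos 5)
Your overall strategy coincides with the paper's: partition by the type of $H$ listed in Table \ref{tab:cases}, dispatch large $q$ using the upper bounds on $\what{Q}(G)$ extracted from the proofs in \cite{Burness2020base}, and settle the surviving small cases in \textsc{Magma}, constructing $H$ as $N_G(K)$ for a suitable $p$-subgroup $K$ where \texttt{MaximalSubgroups} is impractical. However, there is one case where this plan breaks down and which the paper must treat with a bespoke argument: $G_0 = \UU_{12}(2)$ with $H$ of type $\GU_3(2)\wr S_4$. Here the generic bound from the proof of \cite[Proposition 6.3]{Burness2020base} does not yield $\what{Q}(G)<1/4$, and the group is far too large for a direct computation of $Q(G)$ or of the full fusion data, so declaring the case ``amenable to \textsc{Magma}'' does not suffice. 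The paper closes it by sharpening, by hand, the contribution to $\what{Q}(G)$ from elements of order $3$: viewing $H$ as the stabiliser of an orthogonal decomposition $V=V_1\perp V_2\perp V_3\perp V_4$, one isolates the classes in the awkward range $3\cdot 2^{62}<|x^G|\leqs 2^{69}$ (represented by $[I_8,\omega I_4]$) and bounds $|x^G\cap H|\leqs 42480$ by a direct combinatorial count, which brings the total contribution of order-$3$ elements, and separately of involutions, below $1/20$. Your proposal contains no substitute for this step, so as written it has a genuine gap at $n=12$.

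A second, smaller point: for the large-degree exceptional cases such as $\UU_4(5).2^2$ you propose to ``fall back on bounding $\what{Q}(G)$ precisely from the fusion data'', claiming this decides whether $Q(G)\geqs 1/4$ and yields the correct value of $r$. It does not: $\what{Q}(G)$ is only an upper bound for $Q(G)$, so $\what{Q}(G)\geqs 1/4$ is inconclusive, and $r$ is determined by the exact value of $Q(G)$ via \eqref{e:QG}, not by $\what{Q}(G)$. Since several of these groups genuinely satisfy $Q(G)\geqs 1/4$ and must appear in Table \ref{tab:random2} with their exact values of $r$, you still need the double coset computation \eqref{e:reg} (or an equivalent exact count of regular suborbits); the degrees involved are of order $10^6$, so this is feasible, as the paper confirms. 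Your attention to the delicate identification of the overgroups for $\UU_4(3)$, $\UU_3(5)$ and $\UU_3(8)$, and the cross-check against the base-size classification, is well placed and matches the paper's treatment.
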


\begin{proof}
First assume $n$ is a prime and $H$ is of type $\GU_1(q^n)$. For $n \geqs 5$, one checks that the upper bound on $\what{Q}(G)$ in the proof of \cite[Lemma 6.4]{Burness2020base} is sufficient unless $n=5$ and $q \leqs 5$. Suppose $n=5$, so $q \geqs 3$ by the maximality of $H$. If $q=5$ then it is easy to improve the given bound in \cite{Burness2020base}  in order to show that $\what{Q}(G)<1/4$ (for example, we can use the fact that $|H| \leqs 10(5^5+1)/6$). For $q=4$ we observe that $H = N_G(P)$, where $P$ is a Sylow $41$-subgroup of $G$, so it is straightforward to construct $H$ in {\sc Magma} and verify the bound $\what{Q}(G)<1/4$ (note that it suffices to check this for $G = \Aut(G_0)$). The case $q=3$ can also be checked using {\sc Magma} (using \texttt{MaximalSubgroups} to construct $H$, or noting that $H$ is the normaliser of a Sylow $61$-subgroup). Similarly, if $n=3$ then $q \geqs 4$ and the bound in the proof of \cite[Lemma 6.4]{Burness2020base} is sufficient for $q \geqs 23$ (for $q=32$, we note that $|x^G| \geqs |G_0:{\rm U}_{3}(2)|$ if $x$ is a field automorphism of order $5$); the remaining cases with $q \leqs 19$ can be verified using {\sc Magma}.

Next suppose $G_0 = {\rm U}_{3}(q)$ and $H$ is of type $\GU_1(q) \wr S_3$ with $q \geqs 4$. For $q \geqs 43$ it is easy to check that the upper bound on $\what{Q}(G)$ in the proof of \cite[Lemma 6.5]{Burness2020base} is sufficient. The same estimates are also good enough when $29 \leqs q \leqs 41$, noting that in each case $G$ does not contain any field or graph-field automorphisms of order $2$ or $3$.
For $11 \leqs q \leqs 27$ we can use {\sc Magma} to verify the bound $\what{Q}(G)<1/4$. We find that there are examples with $Q(G) \geqs 1/4$ when $q \leqs 9$; they are easily identified using {\sc Magma} and they are recorded in Table \ref{tab:random2}. The case where $G_0 = {\rm U}_{4}(q)$ and $H$ is of type $\GU_1(q) \wr S_4$ is similar. Here $q \geqs 3$ and the bound on $\what{Q}(G)$ in the proof of \cite[Lemma 6.5]{Burness2020base} is good enough for $q \geqs 9$. If $q \in \{7,8\}$ then one can check that $Q(G)<1/4$ using {\sc Magma}. In the same way, we find that there are exceptions to this bound when $q \in \{3,4,5\}$ and each of these cases is listed in Table \ref{tab:random2}.

Next let us turn to the groups where $n \in \{4,6,8\}$, $q=3$ and $H$ is of type $\GU_2(q) \wr S_{n/2}$. If $n=4$ then $G = G_0$ is the only group with $b(G)=2$ (see \cite[Table 7]{Burness2020base}) and with the aid of {\sc Magma} we calculate that $Q(G) = 187/315$. Next assume $n=6$. Here $H = N_G(K)$ for some subgroup $K$ of $G_0$ of order $2^{10}$ and it is straightforward to check that $\what{Q}(G)<1/4$ (see \cite[Example 2.4]{Burness2020base} and the proof of \cite[Proposition 6.3]{Burness2020base}). Similarly, if $n=8$ then $H = N_G(K)$ with $|K|=2^{13}$ and once again one checks that $\what{Q}(G)<1/4$. (Note that in both cases, it suffices to check the bound for $G = \Aut(G_0)$.)

Now assume $n \in \{9,12\}$, $q=2$ and $H$ is of type $\GU_3(q) \wr S_{n/3}$. As noted in the proof of \cite[Proposition 6.3]{Burness2020base}, if $n=9$ then $H = N_G(K)$ with $|K|=3^8$ and we can use {\sc Magma} to verify the bound $\what{Q}(G)<1/4$. 

For $n=12$ we find that the bound presented in the proof of \cite[Proposition 6.3]{Burness2020base} does not give $\what{Q}(G)<1/4$ and a more accurate estimate is required. To do this, it suffices to improve the upper bound on the contribution to $\what{Q}(G)$ from elements of order $3$. 

As in the proof of \cite[Proposition 6.3]{Burness2020base}, we may view $H$ as the stabiliser in $G$ of an orthogonal decomposition 
\[
V = V_1 \perp V_2 \perp V_3 \perp V_4
\]
of the natural module, where each $V_i$ is a nondegenerate $3$-space. Suppose $x \in H$ has order $3$. If some conjugate of $x$ induces a nontrivial permutation of the $V_i$, then $|x^G|>2^{89}=b_1$ and we note that $|H|<2^{42}=a_1$. Following the argument in \cite{Burness2020base}, the contribution from the remaining elements of order $3$ in $H$ with $|x^G|>2^{69}=b_2$ is less than $a_2^2/b_2$, where $a_2 = 2^{31}$. As explained in the proof of \cite[Proposition 6.3]{Burness2020base}, the contribution from the elements with $|x^G| \leqs 3.2^{62}$ is less than $2\sum_{i=3}^7a_i^2/b_i$, where the integers $a_i$ and $b_i$ are defined as in the proof in \cite{Burness2020base}. Finally, if $3.2^{62} < |x^G| \leqs 2^{69}$ then one can check that $x$ is of the form $[I_8,\omega I_4]$, where $\omega \in \mathbb{F}_4$ is a primitive cube root of unity. Here we calculate 
\[
|x^G \cap H| \leqs 2\binom{4}{2}m + \binom{4}{2}m^2+2\binom{4}{2}m^3+m^4= 42480 = a_0
\]
where $m = \frac{1}{3}|\GU_3(2):\GU_2(2)| = 12$. Therefore, the contribution to $\what{Q}(G)$ from elements of order $3$ is less than
\[
a_1^2/b_1+a_2^2/b_2+2\left(a_0^2/b_0+\sum_{i=3}^7a_i^2/b_i\right)<\frac{1}{20}
\]
where $b_0 = 3.2^{62}$. Finally, the estimates in the proof of \cite[Proposition 6.3]{Burness2020base} imply that the contribution to $\what{Q}(G)$ from involutions is also less than $1/20$ and the result follows.

To complete the proof of the proposition, we may assume $n=3$ and either $q=p \equiv 2 \imod{3}$ and $H$ is of type $3^{1+2}.{\rm Sp}_{2}(3)$, or $q=2^f$ with $f \geqs 3$ a prime and $H$ is a subfield subgroup of type $\GU_3(2)$. Suppose $H$ is of type $3^{1+2}.{\rm Sp}_{2}(3)$. Here the proof of \cite[Lemma 6.11]{Burness2020base} gives the result for $q > 29$ and we can use {\sc Magma} to handle the cases with $q \leqs 29$, noting that there are exceptions to the bound $Q(G)<1/4$ when $q=5$ (as recorded in Table \ref{tab:random2}). Finally, let us assume $H$ is of type $\GU_3(2)$, so $q=2^f$ with $f \geqs 3$ odd. If $f \geqs 7$ then the bound on $\what{Q}(G)$ in the proof of \cite[Lemma 6.10]{Burness2020base} is sufficient, while the cases with $f \in \{3,5\}$ can be handled using {\sc Magma}.
\end{proof}

\subsection{Symplectic groups}\label{ss:symp}

Next assume $G_0 = {\rm PSp}_{n}(q)$ with $n \geqs 4$. Recall that $(n,q) \ne (4,2)$ since $G \not\in \mathcal{L}$. 

\begin{prop}\label{p:symp}
The conclusion to Theorem \ref{t:random} holds if $G_0 = {\rm PSp}_{n}(q)$ with $n \geqs 4$.
\end{prop}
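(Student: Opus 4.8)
The plan is to work through the three families of $(G_0,H)$ with $G_0 = \PSp_n(q)$ recorded in Table~\ref{tab:cases}: the imprimitive $\mathcal{C}_2$-subgroups of type $\Sp_2(q)\wr S_{n/2}$ with $n\in\{6,8\}$ and $q=3$; the $\mathcal{C}_2$-subgroups of type ${\rm O}_2^{\e}(q)\wr S_2$ with $n=4$, $q$ even and $q\geqs 4$; and the $\mathcal{C}_3$-subgroup of type ${\rm O}_2^-(q^2)$, again with $n=4$ and $q\geqs 4$ even (recall that $(n,q)\ne(4,2)$ since $G\notin\mathcal{L}$). In each case the aim is to establish $\what{Q}(G)<1/4$, whence $Q(G)\leqs\what{Q}(G)<1/4$ by \eqref{e:Qhat}, and to isolate the finitely many exceptions, for which I would instead compute $Q(G)$ exactly via \eqref{e:QG} and \eqref{e:reg}.

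The first family comprises only the socles $\PSp_6(3)$ and $\PSp_8(3)$ and their almost simple extensions, so these are amenable to the computational methods of Section~\ref{ss:comp}. For $\PSp_8(3)$ the relevant $H$ of type $\Sp_2(3)\wr S_4$ is likely to be too large for \texttt{MaximalSubgroups} to construct directly, so I would realise $H=N_G(K)$ for a suitable $2$-subgroup $K$ (as in the analogous linear and unitary cases in the proofs of Propositions~\ref{p:lin} and~\ref{p:unit}) and then verify $\what{Q}(G)<1/4$. For $\PSp_6(3)$ I would compute $Q(G)$ precisely using \texttt{DoubleCosetRepresentatives}; this is the case yielding the exception $Q(G)=853/1365$ (with a unique regular suborbit) recorded in Table~\ref{tab:random2}.

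For the two $\PSp_4(q)$ families with $q$ even I would treat large $q$ by inspecting the fixed point ratio estimates in the relevant base-size lemmas of \cite{Burness2020base}, reducing to small $q$ which can then be checked in {\sc Magma}, constructing $G$ and $H$ as in Section~\ref{ss:comp}. The $\mathcal{C}_3$-case ${\rm O}_2^-(q^2)$ should be the easier of the two: here the elements of the field-extension torus of order $q^2+1$ are regular semisimple with very large $G$-classes, so their fixed point ratios are tiny and $\what{Q}(G)<1/4$ follows readily. This family produces no exception, whereas the $\mathcal{C}_2$-case contributes the single exception $\Sp_4(4).4$ with $H$ of type ${\rm O}_2^-(4)\wr S_2$ and $Q(G)=103/153$, whose value (and the count $r=2$) I would record via \eqref{e:QG} and \eqref{e:reg}.

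The main obstacle I anticipate is the $\mathcal{C}_2$-family ${\rm O}_2^{\e}(q)\wr S_2$ in characteristic two. Since $|H|$ is of order $q^2$ (up to a factor of $|{\rm Out}(G_0)|$) while the smallest involution class in $\PSp_4(q)$ — the long root (transvection) class — has size only of order $q^4$, a crude application of Lemma~\ref{l:calc} that lumps all involutions together will not deliver $\what{Q}(G)<1/4$, essentially because the transvection class is too small relative to $|H|$. The remedy is to separate the distinct $G$-classes of involutions, determining precisely which types meet $H$ and bounding $|x^G\cap H|$ for each. The key observation is that the long root elements lying in $H$ are exactly the orthogonal reflections in the two ${\rm O}_2^{\e}(q)$ factors — which are symplectic transvections in characteristic two — and there are only $O(q)$ of these, so the dangerous smallest class in fact contributes only $O(q^{-2})$ to $\what{Q}(G)$. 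A secondary point requiring care is the dependence on the sign $\e$ and on the group $|G:G_0|$ of field and graph-field automorphisms, since these affect both $|H|$ and the available outer classes of prime order and must be tracked to confirm that only $\e=-$, $q=4$ survives as an exception.
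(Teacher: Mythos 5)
Your overall strategy coincides with the paper's: the same three-way case division from Table \ref{tab:cases}, {\sc Magma} for the $q=3$ socles and for small even $q$, the estimates from \cite{Burness2020base} for large $q$, and the same two exceptions ($\PSp_6(3)$ with $r=1$ and ${\rm Sp}_4(4).4$ with $r=2$). Two small remarks on the computational part: for ${\rm PSp}_8(3)$ the paper does construct $H$ directly with \texttt{MaximalSubgroups}; the obstruction there is not building $H$ but evaluating $Q(G)$, which is handled by using \texttt{DoubleCosetCanonical} to exhibit enough regular suborbits ($r\geqs 3113$, resp.\ $1557$) to force $Q(G)<1/4$, rather than by bounding $\what{Q}(G)$.

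The one place where your plan as written would run into trouble is the ${\rm O}_2^{\e}(q)\wr S_2$ case, where you have identified the wrong class as the main danger. The transvection count you give (the $2(q-\e)$ reflections supported on a single ${\rm O}_2^{\e}(q)$ factor) is correct, but that analysis is already absorbed into the bound cited from the proof of \cite[Lemma 6.9]{Burness2020base}, which controls the contribution of all inner elements and all field automorphisms. The genuinely delicate contribution comes from the \emph{involutory graph automorphisms}, whose class has size $|G_0:{}^2B_2(q)|\approx q^5$ while $H\setminus H_0$ could a priori contain up to $|H_0|=8(q-\e)^2$ involutions; the crude estimate $|H_0|^2/|x^G|\approx 64/q$ exceeds $1/4$ at $q=2^7$, and this value of $q$ is far too large for a direct {\sc Magma} check, so it falls between your "large $q$ by cited bounds" and "small $q$ by computer" regimes. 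The paper closes this by a structural count of outer involutions: for $\e=-$ one has $H\leqs (C_{q+1})^2{:}({\rm SD}_{16}\times C_7)$ and every involution of $H$ already lies in $H_0=(C_{q+1})^2{:}D_8$, so the contribution vanishes, while for $\e=+$ the quotient is $D_{16}$ with exactly four involutions outside $D_8$, giving at most $4(q-1)^2$ outer involutions and a contribution of $(4\cdot 127^2)^2/|G_0:{}^2B_2(q)|<1/4$. Your "secondary point" about tracking outer classes gestures at this, but without this specific count the argument does not close at $q=2^7$. The $\C_3$ case is as easy as you say, provided you also note (as the paper does) that the $q^2+1$ unipotent involutions of $H_0=C_{q^2+1}{:}C_4$ all lie in the large class of type $c_2$, not in the transvection class.
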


\begin{proof}
First assume $n \in \{6,8\}$, $q=3$ and $H$ is of type $\Sp_2(q)\wr S_{n/2}$. If $n=6$ then the condition $b(G)=2$ implies that $G = G_0$ and using {\sc Magma} we calculate that $Q(G) = 853/1365$, so this case is listed in Table \ref{tab:random2}. For $n=8$ we use \texttt{AutomorphismGroupSimpleGroup} and \texttt{MaximalSubgroups} to construct $G$ and $H$, and we apply \texttt{DoubleCosetCanonical} to establish the existence of sufficiently many regular $H$-orbits in order to force $Q(G)<1/4$ (see \eqref{e:QG}). Indeed, for $G = G_0$ we get $r \geqs 3113$, while $r \geqs 1557$ for $G = G_0.2$. 

Finally let us assume $G_0 = \Sp_4(q)$ with $q \geqs 4$ even and $H$ of type ${\rm O}_2^\epsilon(q)\wr S_2$ or ${\rm O}_{2}^{-}(q^2)$. Here $H$ is maximal only if $G$ contains graph automorphisms and with the aid of {\sc Magma} one checks that if $q \leqs 2^5$ then either $\what{Q}(G)<1/4$ or $q=4$, $G = \Aut(G_0)$ and $H$ is of type ${\rm O}_2^{-}(q)\wr S_2$. In the latter case we have $Q(G) = 103/153$ as recorded in Table \ref{tab:random2}. For the remainder, we may  assume $q \geqs 2^6$. 

Suppose $H$ is of type ${\rm O}_2^\epsilon(q)\wr S_2$, so $H_0 = (C_{q-\e})^2{:}D_8$. By applying the upper bound in the proof of \cite[Lemma 6.9]{Burness2020base}, we deduce that $\what{Q}(G)<1/4$ if $q \ne 2^7$. So let us assume $q=2^7$ and write $\what{Q}(G) = \a_1+\a_2$, where $\a_1$ is the contribution from involutory graph automorphisms. The proof of \cite[Lemma 6.9]{Burness2020base} gives $\a_2<2^{-6}$, so it remains for us to estimate $\a_1$. If $\e=-$ then $H \leqs (C_{129})^2{:}(SD_{16} \times C_7)$ and it follows that every involution in $H$ is contained in $H \cap G_0 = (C_{129})^2{:}D_{8}$, whence $\a_1=0$ and the result follows. Now assume $\e=+$, so $H \leqs (C_{127})^2{:}(D_{16} \times C_7)$. Since there are exactly $4$ involutions in $D_{16} \setminus D_8$, we deduce that $\a_1 \leqs d^2/b$ with $d = 4.127^2$ and $b = |G_0: {}^2B_2(q)| = 34626060288$. One checks that the resulting bound on $\what{Q}(G)$ is good enough.

To complete the proof, let us assume $q \geqs 2^6$ and $H$ is of type ${\rm O}_{2}^{-}(q^2)$, so  
\[
H_0 = {\rm O}_{2}^{-}(q^2).2 = C_{q^2+1}{:}C_4
\]
and we will estimate the contribution to $\what{Q}(G)$ from the various elements of prime order (the details in this case were omitted in the proof of \cite[Lemma 6.9]{Burness2020base}). First let $x \in H$ be a unipotent involution. Then $x$ embeds in $G$ as an involution of type $c_2$ (in the notation of Aschbacher and Seitz \cite{AS}), whence 
\[
|x^G \cap H| = i_2(H_0) = q^2+1=a_1,\;\; |x^G| = (q^2-1)(q^4-1) = b_1.
\]
If $x$ is semisimple, then $|x^G| \geqs |{\rm Sp}_{4}(q):\GU_1(q^2)| = q^4(q^2-1)^2 = b_2$ and we note that there are at most $a_2=q^2+1$ such elements in $H$. Next suppose $x$ is a field automorphism of odd order. Then $|x^G|>q^{20/3}=b_3$ and $H$ contains fewer than $4(q^2+1)\log q = a_3$ such elements. Finally, suppose $x$ is an involutory field or graph automorphism (note that $G$ cannot contain elements of both types). If $\log q$ is even then every involution in $H$ is contained in $H_0$, so we may assume $\log q$ is odd and $x$ is a graph automorphism. Then $|x^G| = q^2(q+1)(q^2-1) = b_4$ and we note that $|x^G \cap H| \leqs |H_0| = 4(q^2+1) = a_4$. Therefore, by applying Lemma \ref{l:calc} we deduce that
\[
\what{Q}(G) < \sum_{i=1}^3a_i^2/b_i + \a a_4^2/b_4,
\]
where $\a=1$ if $\log q$ is odd, otherwise $\a =0$, and we conclude that $\what{Q}(G)<1/4$.
\end{proof}

\subsection{Orthogonal groups}\label{ss:orth}

In order to complete the proof of Theorem \ref{t:random}, we may assume $G_0 = {\rm P\O}_{n}^{\e}(q)$ with $n \geqs 7$. 

\begin{prop}\label{p:orth}
The conclusion to Theorem \ref{t:random} holds if $G_0 = {\rm P\O}_{n}^{\e}(q)$ with $n \geqs 7$.
\end{prop}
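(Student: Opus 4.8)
The plan is to work through the orthogonal rows of Table~\ref{tab:cases}, following the template established in the proofs of Propositions~\ref{p:lin}, \ref{p:unit} and \ref{p:symp}. By Theorem~\ref{t:baseclass} the cases to treat are: the $7$-dimensional group $G_0 = \O_7(3)$ with $H$ of type ${\rm O}_4^+(3) \perp {\rm O}_3(3)$; the $8$-dimensional plus-type groups with $H$ of type ${\rm O}_4^+(q) \wr S_2$, ${\rm O}_2^{\e}(q) \wr S_4$, ${\rm O}_2^-(q) \times \GU_3(q)$ or ${\rm O}_2^-(q^2) \times {\rm O}_2^-(q^2)$; and the higher-dimensional groups $G_0 = {\rm P\O}_n^+(3)$ with $n \in \{12,16\}$ and $H$ of type ${\rm O}_4^+(3) \wr S_{n/4}$. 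In every case I would bound $Q(G)$ above by $\what{Q}(G) = \sum_i |x_i^G| \cdot \fpr(x_i)^2$ (see \eqref{e:Qhat}), partitioning the prime-order elements of $H$ by type and estimating $|x^G \cap H|$ from the structure of $H$ together with lower bounds on $|x^G|$ from \cite[Chapter 3]{BG_book}, exactly as in the corresponding lemmas of \cite{Burness2020base}.

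First I would dispose of the two genuinely infinite families, namely the $8$-dimensional groups with $H$ of type ${\rm O}_2^{\e}(q) \wr S_4$ or ${\rm O}_2^-(q^2) \times {\rm O}_2^-(q^2)$. Here $H$ is soluble of order bounded by a fixed power of $q$ (the relevant stabilisers are essentially torus normalisers, with $|H_0|$ of shape $(q-\e)^4.[\,\cdot\,]$ or $(q^2+1)^2.[\,\cdot\,]$), whereas the smallest class of a nontrivial element of prime order grows like a fixed positive power of $q$. Applying the estimates in the proof of the relevant lemma of \cite{Burness2020base} (or re-deriving them via Lemma~\ref{l:calc}) gives $\what{Q}(G) \to 0$ as $q \to \infty$, so it only remains to inspect a bounded range of small $q$ computationally. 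This isolates the single exception ${\rm P\O}_8^+(3).S_4$ with $H$ of type ${\rm O}_2^-(3) \wr S_4$ recorded in Table~\ref{tab:random2}.

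The fixed large groups $G_0 = {\rm P\O}_n^+(3)$ with $n \in \{12,16\}$ are too big for direct computation, so here I would argue by hand. Since $q = 3 = p$ there are no field automorphisms, and the prime-order elements of $H = {\rm O}_4^+(3) \wr S_{n/4}$ split into (i) unipotent and semisimple elements lying inside the base group, whose $H$-intersections are controlled by the boundedly many prime-order elements of a single ${\rm O}_4^+(3)$ factor, times the number of blocks permuted; (ii) elements inducing a nontrivial permutation of the $n/4$ blocks; and (iii) involutory graph automorphisms, when $G \not\leqs {\rm PGO}_n^+(3)$. For (i) and (ii) the classes $|x^G|$ are of order at least a fixed power of $q$ close to $q^{n-3}$, so each $|x^G \cap H|^2/|x^G|$ is tiny and the total is comfortably below $1/4$; the delicate part is (iii), where I would invoke the fixed point ratio bounds for graph automorphisms from \cite{Burness2020base} to control the contribution. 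Summing the three pieces should yield $\what{Q}(G) < 1/4$.

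Finally, the remaining cases live over small fields — $G_0 = \O_7(3)$, the $8$-dimensional groups over $q=3$ with $H$ of type ${\rm O}_4^+(3) \wr S_2$, and $\O_8^+(2).3$ with $H$ of type ${\rm O}_2^-(2) \times \GU_3(2)$ — and these I would treat computationally as in Section~\ref{ss:comp}, constructing $G$ and $H$ in {\sc Magma} and computing $\what{Q}(G)$ from the fusion of $H$-classes. Whenever $\what{Q}(G) \geqs 1/4$ I would then compute $Q(G)$ exactly and the number $r$ of regular suborbits via a double-coset enumeration (see \eqref{e:QG} and \eqref{e:reg}), reading off the entries of Tables~\ref{tab:random} and \ref{tab:random2}. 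I expect the main obstacle to be the dimension-$8$ plus-type family over $q=3$: because $D_4$ admits triality, ${\rm Out}(G_0) \cong S_4$ and there are numerous intermediate groups $G$ with $G_0 \leqs G \leqs {\rm Aut}(G_0)$, many of which turn out to be genuine exceptions with $Q(G) \geqs 1/4$. Disentangling the triality-twisted conjugacy of the type-${\rm O}_4^+(3) \wr S_2$ and type-${\rm O}_2^-(3) \wr S_4$ stabilisers, and pushing the double-coset computations through for each such $G$, is where the real work lies.
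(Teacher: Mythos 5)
Your proposal is correct and follows essentially the same route as the paper: for each orthogonal case in Table \ref{tab:cases} one bounds $\what{Q}(G)$ via the fixed point ratio estimates from \cite{Burness2020base} for large $q$ and resorts to {\sc Magma} (including double-coset enumeration for the ${\rm P\O}_8^+(3)$ family) for the small fields, with ${\rm Aut}({\rm P\O}_8^+(3))$ acting on cosets of ${\rm O}_2^-(3)\wr S_4$ emerging as the lone extra exception. The only minor divergence is that the paper handles $n=12$ computationally (constructing $H=N_G(K)$ as in \cite[Example 2.4]{Burness2020base}) and simply cites the existing bound for $n=16$, rather than re-deriving a hand estimate for both.
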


\begin{proof}
By inspecting Table \ref{tab:cases} we observe that either $n$ is even and $\e=+$, or $(n,q) = (7,3)$. First assume $n \in \{12,16\}$, $q=3$ and $H$ is of type ${\rm O}_4^+(q)\wr S_{n/4}$. For $n=16$, the upper bound in the proof of \cite[Proposition 6.3]{Burness2020base} gives $\what{Q}(G)<1/4$. On the other hand, if $n=12$ then we can construct $G$ and $H$ in {\sc Magma} (see \cite[Example 2.4]{Burness2020base}) and it is straightforward to check that $\what{Q}(G)<1/4$. The relevant cases with $G_0 = \O_7(3)$ or $\O_8^{+}(2)$ can also be handled using {\sc Magma} and the exceptions with $Q(G) \geqs 1/4$ are recorded in Table \ref{tab:random2}. 

To complete the proof, we may assume $G_0 = {\rm P\Omega}_{8}^{+}(q)$ with $q \geqs 3$. Suppose $q=3$ and $H$ is of type ${\rm O}_{4}^{+}(3) \wr S_2$, noting that $|G:G_0|<6$ since $b(G)=2$. Even though $|G:H|=14926275$ is large, we can still analyse this case in the usual way using {\sc Magma}, working with a set of $(H,H)$ double coset representatives to compute $r$ (and hence $Q(G)$) via \eqref{e:reg}. The results are presented in Table \ref{tab:random2}.

Next assume $H$ is of type ${\rm O}_{2}^{\e'}(q) \wr S_4$. If $q \in \{3,4\}$ then $\e'=-$ and using {\sc Magma} one can check that either $Q(G)<1/4$, or $q = 3$, $G = \Aut(G_0)$, $r=823$ and 
\[
Q(G) = \frac{17810761}{44778825}.
\]
For example, if $q=3$ and $G = G_0.A_4$ then using \texttt{DoubleCosetCanonical} we can verify the bound $r \geqs 3075$, which forces $Q(G)<1/4$. We thank Eamonn O'Brien for his assistance with the precise calculation of $r$ when $G = \Aut(G_0)$. For $q \geqs 5$, we seek to apply the upper bound on $\what{Q}(G)$ presented in the proof of \cite[Lemma 6.7]{Burness2020base}. If $q \geqs 9$ then
\[
\what{Q}(G) < 2q^{-1}+ q^{-2} + q^{-3}+q^{-7} <\frac{1}{4}
\]
and the result follows. One can check that the bounds in the proof of \cite[Lemma 6.7]{Burness2020base} are also sufficient when $q \in \{7,8\}$, so we may assume $q=5$. Here we have $H = N_G(K)$, where  $K<G_0$ has order $2^9$ if $\e'=+$, otherwise $|K|=3^4$. We now construct $H$ as in \cite[Example 2.4]{Burness2020base} and one checks that $\what{Q}(G)<1/4$.

Finally, let us assume $H$ is of type ${\rm O}_2^-(q^2) \times {\rm O}_2^-(q^2)$ with $q \geqs 3$. If $q \geqs 11$ then the upper bound on $\what{Q}(G)$ in the proof of \cite[Lemma 6.8]{Burness2020base} is sufficient. On the other hand, if $q \leqs 9$ then we can construct $H$ in {\sc Magma}, noting that $H = N_G(K)$ with $K$ a Sylow $\ell$-subgroup of $G_0$ and $\ell$ an odd prime divisor of $q^2+1$. In this way, it is straightforward to check that $\what{Q}(G)<1/4$ and the result follows.
\end{proof}

\vs

This completes the proof of Theorem \ref{t:random}.

\section{Two-dimensional linear groups}\label{ss:psl2}

In this section we turn to the groups in $\mathcal{L}$, so $G_0 = {\rm L}_{2}(q)$, $q \geqs 4$ and $H$ is of type ${\rm GL}_{1}(q) \wr S_2$ or ${\rm GL}_{1}(q^2)$. Note that these special cases coincide with the $\mathcal{C}_2$-actions and $\mathcal{C}_3$-actions of $G$, respectively, where a point stabiliser $H$ is a maximal subgroup of $G$ in the collection labelled $\C_i$ in Aschbacher's subgroup structure theorem \cite{asch}. The goal of this section is to establish our main theorems in these cases (we will handle Theorem \ref{t:main12} in Section \ref{ss:indep}). 

A key ingredient in our proof of Theorem \ref{t:main1} for the groups in $\mathcal{L}$ is the following recent result of Chen and Du \cite{ChenDu2020Saxl}.

\begin{thm}[Chen \& Du, \cite{ChenDu2020Saxl}]\label{t:cd}
Let $G \leqs {\rm Sym}(\O)$ be a finite almost simple primitive group with socle $G_0 = {\rm L}_{2}(q)$ and $b(G) = 2$. Then the Saxl graph $\Sigma(G)$ has diameter $2$.
\end{thm}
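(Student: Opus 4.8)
The plan is to combine the probabilistic tools of Section \ref{ss:prob} with an explicit geometric analysis in the two recalcitrant families. By Dickson's classification of the maximal subgroups of $\L_2(q)$, a point stabiliser $H$ with $b(G)=2$ is, apart from small exceptions, either soluble of type $\mathrm{GL}_1(q)\wr S_2$ or $\mathrm{GL}_1(q^2)$ --- the $\mathcal{C}_2$ and $\mathcal{C}_3$ subgroups comprising $\mathcal{L}$ --- or one of the remaining types (Borel, subfield, or $\{A_4,S_4,A_5\}$-type). For the latter I would first establish that $Q(G)<1/2$: either by invoking Theorem \ref{t:random} and inspecting the finitely many exceptions in Tables \ref{tab:random} and \ref{tab:random2}, or, for the insoluble stabilisers not covered there, by a direct fixed-point-ratio estimate via Lemma \ref{l:calc} (here the fixed point ratios are small and $Q(G)\to 0$). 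Once $Q(G)<1/2$ we have $t(G)\geqs 2$, so Proposition \ref{p:bound}(i) produces a common neighbour for any two vertices and $\Sigma(G)$ has diameter $2$.

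The substance of the proof lies in the families in $\mathcal{L}$, where Remark \ref{r:asymp} shows $Q(G)\to 1/2$ (and $Q(G)\to 1$ for $\PGL_2(q)$ with $H=D_{2(q-1)}$), so Proposition \ref{p:bound} is useless and a constructive argument is forced. In the $\mathcal{C}_2$ case I would identify $\O$ with the set of unordered pairs $\{x,y\}$ of distinct points of the projective line $\PG_1(q)=\F\cup\{\infty\}$, on which $G_0$ acts. The key combinatorial input is an adjacency criterion: two pairs $\a=\{a_1,a_2\}$ and $\b=\{b_1,b_2\}$ form a base for $\PGL_2(q)$ precisely when the four points are distinct and no involution interchanges $a_1\leftrightarrow a_2$ while interchanging $b_1\leftrightarrow b_2$, a condition I would phrase in terms of the cross-ratio $(a_1,a_2;b_1,b_2)$ differing from the finitely many ``bad'' harmonic-type values. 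Given two arbitrary vertices, one then exhibits a third pair $\g$ whose cross-ratios against both $\a$ and $\b$ avoid these bad values; since $q$ is large there are plentiful admissible choices, and the triple transitivity of $\PGL_2(q)$ on $\PG_1(q)$ lets me normalise $\a$ and reduce the existence of $\g$ to a counting estimate.

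The $\mathcal{C}_3$ case I would treat in parallel, now identifying $\O$ with the set of $\s$-conjugate pairs $\{P,P^{\s}\}$ of points in $\PG_1(q^2)\setminus\PG_1(q)$, where $\s\colon x\mapsto x^q$ is the Frobenius; these are the ``imaginary'' points of the line, and $H=D_{2(q+1)}$ is the stabiliser of such a pair. The stabiliser calculus and the resulting adjacency criterion are formally identical to those of $\mathcal{C}_2$ after passing to $\mathbb{F}_{q^2}$, so the common-neighbour construction again reduces to showing that the forbidden cross-ratio values do not exhaust the admissible ones.

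The main obstacle will be the groups with $G\not\leqs\PGL_2(q)$, that is, those containing field or graph-field automorphisms: here a nontrivial element of $G_\a\cap G_\b$ need not be a M\"obius map but may be semilinear, conjugating the four points through a power of $\s$, so the clean cross-ratio criterion must be supplemented by extra conditions arising from each coset $G_0\p^{\,i}$ of field automorphisms. Controlling all of these simultaneously --- and ensuring that the common neighbour $\g$ evades the stabilisers in every semilinear coset --- is the delicate point, and I would expect to fall back on direct computation in {\sc Magma} for the finitely many small values of $q$ (and small $f$, writing $q=p^f$) where the generic counting estimate is not yet conclusive, exactly as in the treatment of $\mathcal{L}$ in Sections \ref{ss:psl2_c2} and \ref{ss:psl2_c3}.
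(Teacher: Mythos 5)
First, a point of order: the paper does not prove this statement at all --- Theorem \ref{t:cd} is imported verbatim from Chen and Du \cite{ChenDu2020Saxl} and used as a black box. Indeed, the in-paper arguments closest to your proposal (Propositions \ref{p:psl2_c2star} and \ref{p:psl2_c3star}, and Theorem \ref{t:further}) prove the \emph{stronger} common-neighbour property \eqref{e:star}, but they invoke Theorem \ref{t:cd} precisely to dispose of non-adjacent pairs, reducing everything to the case where $\{\a,\b\}$ is already a base. So your proposal is an attempt at the one ingredient this paper deliberately does not supply. Your architecture --- probabilistic bounds via Proposition \ref{p:bound} outside $\mathcal{L}$, an explicit analysis of the $\mathcal{C}_2$- and $\mathcal{C}_3$-actions with non-square and subfield conditions playing the role of ``bad cross-ratio values'' --- is exactly the shape of the paper's treatment of the adjacent case, and your anticipation of extra conditions coming from the cosets $G_0\phi^i$ matches condition (iii) of Lemma \ref{l:c2_cond} and \eqref{e:bcon}.

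The genuine gap is the final step in the $\mathcal{L}$ families: ``since $q$ is large there are plentiful admissible choices'' reducing to ``a counting estimate.'' This does not close. For $G = \PSigmaL_2(q)$ in the $\mathcal{C}_2$-action, Corollary \ref{c:c2_val} gives valency $m(q-1)/2$ with $m \leqs (q-1)/2$, so the valency is at most $(q-1)^2/4$, which is strictly less than $|\O|/2 = q(q+1)/4$; the union bound $|N(\a)\cap N(\b)| \geqs 2\,\mathrm{val} - |\O|$ is therefore negative and no naive count of admissible choices can produce a common neighbour. (This is the quantitative content of Remark \ref{r:asymp}: $Q(G)\to 1/2$, so the problem sits exactly on the boundary where counting fails.) One either needs a refined estimate --- in effect a character-sum argument showing that two quadratic-residue conditions and two subfield-avoidance conditions can be satisfied simultaneously, which is delicate because the subfield conditions are not multiplicative characters --- or, as the paper does for the adjacent case, an explicit witness: $\gamma = \{\la e_1-be_2\ra,\la e_1-ce_2\ra\}$ (respectively $c=-b$ in the $\mathcal{C}_3$-setting), whose admissibility is verified by the identity $-de^{-1} = -4(bc^{-1}+cb^{-1}+2)^{-1}$ and a bespoke subfield argument. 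Your sketch names neither mechanism. A smaller inaccuracy: for $G=\PGL_2(q)$ with $H=D_{2(q-1)}$ the adjacency criterion is simply that the two $2$-subsets of ${\rm PG}_1(q)$ meet ($\Sigma(G)\cong J(q+1,2)$), since an involution swapping both pairs always exists when the four points are distinct; the cross-ratio/non-square dichotomy only becomes relevant when $G\cap\PGL_2(q)=G_0$, which is in fact the only case you need (by \cite[Lemma 4.7]{Burness2020base}, $b(G)=2$ forces $G=\PGL_2(q)$ or $G\cap\PGL_2(q)=G_0$, and the Borel case never has $b(G)=2$).
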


This establishes a special case of a conjecture in \cite{SaxlGraph}, which asserts that $\Sigma(G)$ has diameter at most $2$ for every finite primitive permutation group $G$ with $b(G)=2$. In fact, \cite[Conjecture 4.5]{SaxlGraph} states that the following even stronger property holds in this general setting:
\begin{equation}\label{e:star}
\mbox{\emph{Any two vertices in $\Sigma(G)$ have a common neighbour.}} \tag{$\star$}
\end{equation}
In view of Theorem \ref{t:cd}, in order to establish this for the groups we are considering in this section, it suffices to show that if $\{\a,\b\}$ is a base for $G$, then there exists $\gamma \in \O$ such that $\{\a,\gamma\}$ and $\{\b,\gamma\}$ are bases. 

Let us fix some notation. Let $V$ be the natural module for $G_0$ and write $q=p^f$, where $p$ is a prime. Fix a basis $\{e_1,e_2\}$ for $V$ and write $\mathbb{F}_{q}^{\times} = \la \mu \ra$. Let $\delta \in {\rm PGL}_{2}(q)$ be the image (modulo scalars) of the diagonal matrix ${\rm diag}(\mu, 1) \in {\rm GL}_{2}(q)$, which induces a diagonal automorphism on $G_0$. Similarly, let $\phi$ be a field automorphism of order $f$ such that $(ae_1+be_2)^{\phi} = a^pe_1+b^pe_2$ for all $a,b \in \mathbb{F}_q$ and note that 
\[
{\rm Aut}(G_0) = \la G_0, \delta, \phi \ra
\] 
and ${\rm P\Sigma L}_{2}(q) = \la G_0, \phi \ra$.
For $g \in {\rm Aut}(G_0)$, if we write $\ddot{g}$ for the coset $G_0g$, then
\[
{\rm Out}(G_0) = \{ \ddot{g} \,:\, g \in {\rm Aut}(G_0) \} = \la \ddot{\delta} \ra \times \la \ddot{\phi} \ra = C_{(2,q-1)} \times C_{f}.
\]
As before, if $H$ is a subgroup of $G$, then we set $H_0 = H \cap G_0$.

It is convenient to use computational methods to handle the cases where $q$ is small. To this end, we present the following result. Note that ${\rm L}_{2}(9).2 = {\rm M}_{10}$ in part (ii)(b). Also recall that we write $\omega(G)$ for the clique number of $\Sigma(G)$.

\begin{prop}\label{p:comp}
Let $G \leqs {\rm Sym}(\O)$ be a finite almost simple primitive group with socle $G_0 = {\rm L}_{2}(q)$ and point stabiliser $H$ of type ${\rm GL}_{1}(q) \wr S_2$ or ${\rm GL}_{1}(q^2)$. If $b(G) = 2$ and $q \leqs 27$, then the following hold:
\begin{itemize}\addtolength{\itemsep}{0.2\baselineskip}
\item[{\rm (i)}] Property \eqref{e:star} holds.
\item[{\rm (ii)}] $G$ has a unique regular suborbit if and only if one of the following holds:

\vspace{1mm}

\begin{itemize}\addtolength{\itemsep}{0.2\baselineskip}
\item[{\rm (a)}] $G={\rm PGL}_{2}(q)$, $q \geqs 4$, $q \ne 5$ and $H = D_{2(q-1)}$; or
\item[{\rm (b)}] $(G,H) = ({\rm L}_{2}(5),D_6)$ or $({\rm L}_{2}(9).2, 5{:}4)$. 
\end{itemize}

\item[{\rm (iii)}] $\omega(G) \geqs 4$, with equality if and only if $G = {\rm L}_{2}(4) \cong {\rm L}_{2}(5)$ and $H=D_6$. 
\end{itemize}
\end{prop}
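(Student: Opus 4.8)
The plan is to verify all three parts by direct computation with {\sc Magma}, exploiting the fact that for $q \leqs 27$ there are only finitely many groups in play and each is of modest order. The first step is to enumerate the relevant pairs $(G,H)$: using the classification of base-two groups in \cite{Burness2020base} together with the maximality conditions recorded in \cite{Low-Dimensional}, I would list every almost simple $G$ with socle $G_0 = {\rm L}_2(q)$, $q \leqs 27$, possessing a maximal subgroup $H$ of type ${\rm GL}_1(q) \wr S_2$ (the $\C_2$-subgroups) or ${\rm GL}_1(q^2)$ (the $\C_3$-subgroups) and satisfying $b(G) = 2$. Some care is needed here to account for every group lying between $G_0$ and $\Aut(G_0)$ up to conjugacy — for example the several groups with socle ${\rm L}_2(9)$, and the exceptional isomorphisms ${\rm L}_2(4) \cong {\rm L}_2(5) \cong A_5$ and ${\rm L}_2(9) \cong A_6$ — retaining only those for which $H$ is genuinely maximal and $b(G)=2$.

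For each pair I would construct $G$ acting on $\O = G/H$ as in Section \ref{ss:comp}, compute the $H$-orbits on $\O$, and isolate the \emph{regular} suborbits, namely those of length $|H|$. Their union is the neighbourhood $N(\a_0)$ of the base point $\a_0$ stabilised by $H$, and $\Sigma(G)$ is the generalised orbital graph built from these suborbits. Part (ii) is then immediate: the number $r$ of regular suborbits is read off directly, and $G$ has a unique regular suborbit precisely when $r=1$ (equivalently, by \eqref{e:QG}, when $Q(G) = 1 - |H|/n$); comparing against the statement isolates cases (a) and (b).

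For part (i), vertex-transitivity reduces property \eqref{e:star} to checking, for each suborbit representative $\b$, that $N(\a_0) \cap N(\b) \neq \emptyset$, where $N(\b) = N(\a_0)^g$ for any $g \in G$ with $\a_0^g = \b$; alternatively, invoking Theorem \ref{t:cd} one need only treat the $\b$ in regular suborbits (the adjacent pairs), since diameter $2$ already furnishes a common neighbour for every non-adjacent pair. For part (iii) I would compute the clique number $\omega(G)$ of $\Sigma(G)$ directly — the graphs are small enough that {\sc Magma}'s clique routines return $\omega(G)$ at once — confirming $\omega(G) \geqs 4$ in every case, with equality only for $G = {\rm L}_2(4) \cong {\rm L}_2(5)$ and $H = D_6$, where $\Sigma(G)$ is the Johnson graph $J(5,2)$ and $\omega(G)=4$.

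The execution is entirely routine; the one point genuinely demanding attention is the bookkeeping in the enumeration step — correctly identifying all groups $G$ of each shape up to conjugacy in $\Aut(G_0)$, verifying the maximality of $H$, and keeping the various small ${\rm L}_2(q)$ coincidences straight — so that no case is overlooked and none is counted twice. This, rather than any of the computations themselves, is where I expect the main (if modest) difficulty to lie.
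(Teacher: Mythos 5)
Your proposal is correct and matches the paper's proof in essence: both reduce the statement to a finite {\sc Magma} computation over the relevant pairs $(G,H)$ with $q\leqs 27$, checking \eqref{e:star} suborbit-by-suborbit, counting regular suborbits for (ii), and verifying the clique bound for (iii). The only differences are implementation details (the paper works with $(H,H)$ double coset representatives and a random search for cliques, where you propose $H$-orbits on $\O$ and exact clique computation, which are equivalent here), and your observation that Theorem \ref{t:cd} reduces part (i) to adjacent pairs is a valid optimisation, being the same reduction the paper itself uses for general $q$ in Section \ref{ss:psl2}.
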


\begin{proof}
To verify \eqref{e:star}, we proceed as in Section \ref{s:main1}, working with the {\sc Magma} functions \texttt{AutomorphismGroupSimpleGroup} and \texttt{MaximalSubgroups} to construct $G$ and $H$. We then use \texttt{DoubleCosetRepresentatives} to determine a set $R$ of $(H,H)$ double coset representatives and for each $x \in R$ we find an element $y \in G$ such that $H \cap H^y = H^x \cap H^y = 1$, which establishes \eqref{e:star}. In the same way, we can count the number of elements $x \in R$ with $|HxH| = |H|^2$, which coincides with the number of regular suborbits of $G$ (the existence of a unique regular suborbit in (ii)(a) was noted in \cite[Example 2.5]{SaxlGraph}). Finally, we can use the {\sc Magma} code presented in Section \ref{ss:clique} to verify the bound on $\omega(G)$ in part (iii).
\end{proof}

\subsection{$\C_2$-actions}\label{ss:psl2_c2}

Here $H$ is of type ${\rm GL}_{1}(q) \wr S_2$, so $H_0 = D_{2(q-1)/h}$ and $|\O| = \frac{1}{2}q(q+1)$, where $h=(2,q-1)$. We may identify $\O = G/H$ with the set of unordered pairs of distinct $1$-dimensional subspaces of the natural module $V$ for $G_0$. The maximality of $H$ implies that $q \geqs 4$ and $q \ne 5$ (see \cite[Table 8.1]{Low-Dimensional}, for example); in view of Proposition \ref{p:comp}, we may assume that $q>27$. By \cite[Lemma 4.7]{Burness2020base} we have $b(G,H) \leqs 3$, with equality if and only if ${\rm PGL}_{2}(q)<G$.  

As noted in \cite[Example 2.5]{SaxlGraph}, if $G=\PGL_2(q)$ then $\Sigma(G)$ is isomorphic to the \emph{Johnson graph} $J(q+1,2)$; the vertices of this graph correspond to the $2$-element subsets of a set of size $q+1$, with two vertices joined by an edge if they have nonempty intersection. This observation immediately implies that \eqref{e:star} holds, $G$ has a unique regular suborbit and $\Sigma(G)$ has clique number $q$. Therefore, for the remainder of this section we will assume that $q$ is odd and $G \cap {\rm PGL}_{2}(q) = G_0$. Then as noted in the proof of \cite[Lemma 4.7]{Burness2020base}, this implies that one of the following holds:
\begin{itemize}\addtolength{\itemsep}{0.2\baselineskip}
\item[{\rm (a)}] $G=\langle G_0,\phi^j\rangle$ for some $j$ in the range $0\leqs j<f$; or
\item[{\rm (b)}] $G=\langle G_0,\delta\phi^j\rangle$ with $0<j<f$ and $f/(f,j)$ even.
\end{itemize}

\vspace{2mm}

Set $\a,\b \in \O$, where $\a = \{\la e_1 \ra, \la e_2\ra\}$ and $\b = \{\la u \ra, \la v \ra \}$. Let us assume $q$ is odd and suppose $G = {\rm P\Sigma L}_{2}(q) = \la G_0, \phi \ra$. Notice that if $u = e_1$ and $v = be_1+e_2$, then $\a$ and $\b$ are fixed by the image in $G$ of an element 
\[
\begin{pmatrix}
	a&0\\
	0&a^{-1}
	\end{pmatrix}\phi \in \la {\rm SL}_{2}(q), \phi \ra
	\]
with $a^2 = b^{p-1}$. Similarly, the pointwise stabiliser of $\{\a,\b\}$ is nontrivial if $u = e_2$. Therefore, $\{\a,\b\}$ is a base for $G$ only if $\la u \ra = \la e_1+be_2\ra$ and $\la v \ra = \la e_1+ce_2\ra$ for distinct nonzero scalars $b,c \in \mathbb{F}_q$. 

In Lemma \ref{l:c2_cond_PSigmaL} below we present necessary and sufficient conditions on the scalars $b$ and $c$ to ensure that $\{\a,\b\}$ is a base for ${\rm P\Sigma L}_{2}(q)$. To do this, we need the following more general result. Note that the condition in part (iii) is equivalent to the non-containment of $bc^{-1}$ in a proper subfield of $\mathbb{F}_q$.

\begin{lem}\label{l:c2_cond}
Suppose $G\cap \PGL_2(q)=G_0$ with $q$ odd and set 
\[
\alpha=\{\la e_1\ra,\la e_2\ra\},\;\; \beta=\{ \la e_1+be_2 \ra, \la e_1+ce_2\ra\}
\]
with $b \ne c$. Then $\{\alpha,\beta\}$ is a base for $G$ if the following conditions are satisfied:
\begin{itemize}\addtolength{\itemsep}{0.2\baselineskip}
\item[{\rm (i)}] $bc \ne 0$; 
\item[{\rm (ii)}] $-bc^{-1}$ is a non-square in $\mathbb{F}_q$; and
\item[{\rm (iii)}] $b^{p^k-1}\ne c^{p^k-1}$ for all $0<k<f$.
\end{itemize}
\end{lem}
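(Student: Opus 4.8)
The plan is to show that the pointwise stabiliser $G_\alpha \cap G_\beta$ is trivial. I would identify the projective line with $\mathbb{F}_q \cup \{\infty\}$ via $\la e_1 + te_2\ra \leftrightarrow t$ and $\la e_2\ra \leftrightarrow \infty$, so that $\alpha$ corresponds to the pair $\{0,\infty\}$ and $\beta$ to $\{b,c\}$; condition (i), together with $b \ne c$, guarantees that these are four distinct points. Suppose $x \in G_\alpha \cap G_\beta$ is nontrivial and write $x = \bar{A}\phi^k$, where $A \in \GL_2(q)$, $\bar{A}$ is its image in $\PGL_2(q)$, and $0 \leqslant k < f$; set $s = p^k$. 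Since $\phi^k$ fixes $e_1$ and $e_2$, the requirement that $x$ preserve $\{\la e_1\ra, \la e_2\ra\}$ forces $\bar{A}$ to be diagonal or antidiagonal, so on the affine parameter $t$ the element $x$ acts either as $t \mapsto d^{-1}t^{s}$ (diagonal, $\bar{A} = \mathrm{diag}(d,1)$) or as $t \mapsto t_0^{-1}t^{-s}$ (antidiagonal).

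Next I would impose that $x$ also preserves the unordered pair $\{b,c\}$ and split into four sub-cases according to whether $\bar{A}$ fixes or swaps the two subspaces of $\alpha$ and whether $x$ fixes or swaps $b$ and $c$. Writing $\lambda = bc^{-1}$, a direct computation shows that the two sub-cases (diagonal/fix) and (antidiagonal/swap) force $\lambda^{s-1} = 1$, while (diagonal/swap) and (antidiagonal/fix) force $\lambda^{s+1} = 1$; in each case one also reads off an explicit value for $d$ or $t_0$. The only sub-case producing $x = 1$ is the diagonal/fix configuration with $k = 0$.

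To eliminate the remaining possibilities I would use the hypotheses as follows. If $\lambda^{s-1} = 1$ with $0 < k < f$, then $\lambda \in \mathbb{F}_{p^{k}}$, a proper subfield of $\mathbb{F}_q$, contradicting (iii) in its equivalent form. If $\lambda^{s+1} = 1$, then applying Frobenius twice gives $\lambda^{p^{2k}} = \lambda$, so $\lambda \in \mathbb{F}_{p^{\gcd(2k,f)}}$; unless $\gcd(2k,f) = f$ this again contradicts (iii), leaving only $k = f/2$ (with $f$ even). There remain two configurations with $k = 0$. The antidiagonal/swap case is the map $t \mapsto bc\,t^{-1}$, realised by a matrix of determinant $-(bc)^{-1}$, which lies in $G_0 = \PSL_2(q)$ exactly when $-bc^{-1}$ is a square (the determinant and $-bc^{-1}$ differ by the square $b^{-2}$); hence (ii) together with the hypothesis $G \cap \PGL_2(q) = G_0$ forces $x \notin G$, a contradiction. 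The diagonal/swap and antidiagonal/fix cases with $k=0$ both give $\lambda^2 = 1$, hence $\lambda = -1$ (as $b \ne c$), so $-bc^{-1} = 1$ is a square, again contradicting (ii).

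The main obstacle is the single surviving branch $k = f/2$ with $\lambda^{q_0+1} = 1$, where $q_0 = p^{f/2}$, since here (iii) yields no information. The key observation I would exploit is that $\lambda^{q_0+1} = 1$ automatically forces $\lambda^{(q-1)/2} = \big(\lambda^{q_0+1}\big)^{(q_0-1)/2} = 1$, so $\lambda$ is a square in $\mathbb{F}_q$; and because $f$ is even the field $\mathbb{F}_q$ has square order, whence $q \equiv 1 \imod 4$ and $-1$ is a square. Consequently $-bc^{-1} = -\lambda$ is a square, once more contradicting (ii). This closes the last case, so no nontrivial $x$ fixes both $\alpha$ and $\beta$, and $\{\alpha,\beta\}$ is a base for $G$.
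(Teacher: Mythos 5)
Your proof is correct and follows essentially the same route as the paper: both arguments split into the four sub-cases (diagonal/antidiagonal on $\alpha$, fix/swap on $\beta$), derive the relations $(bc^{-1})^{p^k\pm 1}=1$, use condition (iii) to force $k\in\{0,f/2\}$, and then use condition (ii) to eliminate the $k=0$ branches (via the square $-bc^{-1}$, equivalently your determinant computation) and the $k=f/2$ branch (via $q\equiv 1\imod 4$ and $bc^{-1}$ being a square). The only difference is presentational — you work with M\"obius actions on $\mathbb{F}_q\cup\{\infty\}$ while the paper computes directly with vectors and matrices in $\la\GL_2(q),\phi\ra$.
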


\begin{proof}
Suppose $b$ and $c$ satisfy the three given conditions and let us assume 
\[
x=AB^i\phi^j\in\la \GL_2(q),\phi\ra
\]
fixes $\alpha$ and $\beta$, where $A\in\SL_2(q)$, $B={\rm diag}(\mu,1)$, $0\leqs i<q-1$ and $0\leqs j<f$, with $j>0$ if $i>0$. It suffices to show that $x = \pm I_2$. Since $x$ fixes $\a$, the matrix of $A$ with respect to the basis $\{e_1,e_2\}$ is either diagonal or anti-diagonal. 

First assume $x$ fixes the two $1$-spaces comprising $\a$, so $A={\rm diag}(a,a^{-1})$ is diagonal. If $x$ also fixes the two spaces in $\beta$, then 
\begin{align*}
(e_1+be_2)^x&=a\mu^i e_1+a^{-1}b^{p^j}e_2=\eta_1(e_1+be_2)\\
(e_1+ce_2)^x&=a\mu^i e_1+a^{-1}c^{p^j}e_2=\eta_2(e_1+ce_2)
\end{align*}
for some $\eta_1,\eta_2\in\mathbb{F}_q^{\times}$. Therefore 
\begin{equation}\label{e:c21}
a^2\mu^i = b^{p^j-1} = c^{p^j-1}
\end{equation}
and thus (iii) implies that $j=0$, so $i=0$ and $a^2=1$, which gives $x = \pm I_2$ as required.
Similarly, if $x$ interchanges the spaces in $\beta$, then
\begin{equation}\label{e:c22}
a^2\mu^i = b^{p^j}c^{-1} = c^{p^j}b^{-1}.
\end{equation}
Here $b^{p^{2j}-1} = c^{p^{2j}-1}$, so (iii) implies that $2j=0$ or $f$. Suppose $2j=0$, so $i=0$ and $a^2 = bc^{-1} = cb^{-1}$ and thus $bc^{-1}=\pm 1$. But $b \ne c$, so $bc^{-1}=-1$, which is incompatible with (ii). Now assume $2j=f$, so $q \equiv 1 \imod{4}$ and $-1$ is a square in $\mathbb{F}_q$. In addition, \eqref{e:c22} gives $(bc^{-1})^{p^{f/2}+1} =1$, so $bc^{-1}\in\langle \mu^{p^{f/2}-1}\rangle$ and thus $bc^{-1}$ is a square. Therefore, $-bc^{-1}$ is a square, which once again is incompatible with (ii).  
	
	Now assume $A=\begin{pmatrix}
	0&a\\
	-a^{-1}&0
	\end{pmatrix}$ is anti-diagonal. If $x$ fixes both spaces in $\beta$ then
	\begin{align*}
	(e_1+be_2)^x&=ab^{p^j}e_1-a^{-1}\mu^ie_2=\eta_1(e_1+be_2)\\(e_1+ce_2)^x&=ac^{p^j}e_1-a^{-1}\mu^ie_2=\eta_2(e_1+ce_2)
	\end{align*}
	for some $\eta_1,\eta_2\in\mathbb{F}_q^{\times}$. This gives
	\begin{equation}\label{e:c23}
	-a^2\mu^{-i}=b^{-p^j-1}=c^{-p^j-1}.
	\end{equation}
	Here $b^{p^{2j}-1}=c^{p^{2j}-1}$ and thus $2j=0$ or $f$ by (iii). If $2j=0$ then $i=0$ and $-a^2=b^{-2}=c^{-2}$, which implies that $bc^{-1}=\pm 1$. As noted above, this is incompatible with (ii). Now assume $2j=f$, so $q\equiv 1\pmod 4$ and $-1$ is a square in $\mathbb{F}_q$ once again. Then (\ref{e:c23}) gives $(bc^{-1})^{p^{f/2}+1}=1$ and as above we deduce that $bc^{-1}$ is a square. Hence, $-bc^{-1}$ is also a square, which contradicts (ii). 
	
	Finally, suppose $A$ is anti-diagonal as above and assume $x$ interchanges the $1$-spaces in $\beta$. Here we get 
		\begin{equation}\label{e:c24}
	-a^2\mu^{-i}=b^{-p^j}c^{-1}=c^{-p^j}b^{-1},
	\end{equation}
so $b^{p^j-1}=c^{p^j-1}$ and the condition in (iii) implies that $j=0$ and $i=0$. Therefore $-bc^{-1} = (ab)^2$, which is incompatible with (ii).
	
We conclude that if the scalars $b$ and $c$ satisfy the conditions in (i), (ii) and (iii), then $\{\a,\b\}$ is a base. 
\end{proof}

\begin{lem}
	\label{l:c2_cond_PSigmaL}
	Let $G=\PSigmaL_2(q)$ with $q$ odd and set $\alpha$ and $\beta$ as in Lemma \ref{l:c2_cond}. Then $\{\alpha,\beta\}$ is a base for $G$ if and only if the scalars $b$ and $c$ satisfy the conditions {\rm(i)--(iii)} in Lemma \ref{l:c2_cond}.
\end{lem}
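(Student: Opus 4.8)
The plan is to prove Lemma~\ref{l:c2_cond_PSigmaL} by establishing the two implications separately. Lemma~\ref{l:c2_cond} already provides the ``if'' direction: if the scalars $b$ and $c$ satisfy conditions (i)--(iii), then $\{\a,\b\}$ is a base for any $G$ with $G \cap \PGL_2(q) = G_0$, and in particular for $G = \PSigmaL_2(q) = \la G_0, \phi\ra$. So the entire content of the lemma lies in proving the converse: if any one of (i), (ii), (iii) fails, then $\{\a,\b\}$ is \emph{not} a base, i.e.\ there exists a nontrivial element of $G$ fixing both $\a$ and $\b$.

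First I would dispose of the failure of (i). If $bc = 0$, then one of the two $1$-spaces in $\b$ coincides with $\la e_1\ra$ or $\la e_2\ra$, so $\a$ and $\b$ share a common point; the earlier discussion in the text (the cases $u = e_1$ or $u = e_2$) already exhibits a nontrivial stabilising element, so $\{\a,\b\}$ is not a base. Next, suppose (ii) fails, so $-bc^{-1}$ is a square in $\mathbb{F}_q$, say $-bc^{-1} = s^2$. The natural candidate is the purely field-free case $j = 0$: I would look for an anti-diagonal matrix $A = \left(\begin{smallmatrix} 0 & a \\ -a^{-1} & 0\end{smallmatrix}\right) \in \SL_2(q)$ whose image interchanges the two $1$-spaces of $\b$ while fixing $\a$. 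Running the computation in \eqref{e:c24} backwards with $i = j = 0$, the requirement becomes $-a^2 = b^{-1}c^{-1}$, which is solvable for $a$ precisely when $-bc^{-1}$ (equivalently $-(bc)^{-1}$ up to squares) is a square; this produces the desired nontrivial stabilising element. I would verify that this element genuinely fixes $\a$ (automatic, since anti-diagonal matrices permute $\la e_1\ra, \la e_2\ra$) and swaps the spaces of $\b$, hence lies in $G_0 \leqs G$ and destroys the base property.

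Finally, suppose (iii) fails, so $b^{p^k - 1} = c^{p^k - 1}$ for some $0 < k < f$. Now the field automorphism must be used: I would build an element $x = A\phi^k$ (with $A$ diagonal) realising equation \eqref{e:c21} in reverse. Setting $a^2\mu^i = b^{p^k - 1} = c^{p^k - 1}$ with $i = 0$, one needs $a^2 = b^{p^k-1} = c^{p^k-1}$ to be solvable; since $\phi^k \in G$ and $A \in \SL_2(q)$, the resulting $x$ lies in $\PSigmaL_2(q)$ and fixes both spaces of $\b$ as well as $\a$. The one subtlety is solvability of $a^2 = b^{p^k-1}$ in $\mathbb{F}_q$: here I would note that the relevant power is automatically a square, or else pass to the variant where $x$ swaps the two spaces of $\b$ via \eqref{e:c22} with $2k$ in place of $k$, which widens the available choices. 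The \textbf{main obstacle} I anticipate is exactly this bookkeeping in the (iii)-failure case: one must confirm that the failure of (iii) really does yield a solvable equation for the matrix entries \emph{within} $\PSigmaL_2(q)$ (not merely in the larger group $\la \GL_2(q), \phi\ra$), since $\PSigmaL_2(q)$ excludes the diagonal automorphism $\delta$. Tracking which of the four cases in the proof of Lemma~\ref{l:c2_cond} corresponds to each way a condition can fail, and checking the square/non-square constraints are consistent, is where the care is needed; the computations themselves are the same linear-algebra identities \eqref{e:c21}--\eqref{e:c24} read in the reverse direction.
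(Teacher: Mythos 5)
Your proposal is correct and follows essentially the same route as the paper: the ``if'' direction is quoted from Lemma \ref{l:c2_cond}, and the converse is obtained by running equations \eqref{e:c21}--\eqref{e:c24} backwards with $i=0$ to exhibit an explicit nontrivial stabilising element in $\la \SL_2(q),\phi\ra$ for each failed condition. The one solvability worry you flag in the (iii) case resolves exactly as you anticipate, since $q$ is odd and hence $p^k-1$ is even, so $b^{p^k-1}$ is automatically a square.
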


\begin{proof}
By Lemma \ref{l:c2_cond}, it suffices to show that if any of the conditions in (i), (ii) or (iii) fail to hold, then there exists an element $x \ne \pm I_2$ in ${\rm \Sigma L}_{2}(q) = \la \SL_2(q),\phi\ra$ that fixes $\a$ and $\b$. We proceed by inspecting the proof of Lemma \ref{l:c2_cond}, noting that $i=0$ in each of the equations \eqref{e:c21}--\eqref{e:c24}.
	
	As explained in the discussion preceding Lemma \ref{l:c2_cond}, if $bc=0$ then $\{\a,\b\}$ is not a base. Next assume $-bc^{-1}$ is a square in $\mathbb{F}_q$, say $d^2=-bc^{-1}$. Then setting $a=db^{-1}$ gives $-a^2 = b^{-1}c^{-1}$ and we get a solution to \eqref{e:c24} with $j=0$. 
	Finally, suppose $b^{p^k-1} = c^{p^k-1}$ for some $0<k<f$ and choose $a \in \mathbb{F}_q$ with $a^2 = b^{p^k-1}$. Then \eqref{e:c21} is satisfied and we conclude that $x = {\rm diag}(a,a^{-1})\phi^k$ fixes $\a$ and $\b$.
\end{proof}

Let us record three corollaries of Lemma \ref{l:c2_cond_PSigmaL}. The first result allows us to reduce our main problems to the special case $G = \PSigmaL_2(q)$.

\begin{cor}
	\label{c:c2_min}
	Suppose $G\cap \PGL_2(q)=G_0$ and $q$ is odd. Then the Saxl graph $\Sigma(G)$ contains $\Sigma(\PSigmaL_2(q))$ as a subgraph.
\end{cor}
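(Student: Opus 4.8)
The key observation is that the graphs $\Sigma(G)$ and $\Sigma(\PSigmaL_2(q))$ share the common vertex set $\O$, so to prove the containment it suffices to show that every edge of $\Sigma(\PSigmaL_2(q))$ is also an edge of $\Sigma(G)$; equivalently, that every size-two base for $\PSigmaL_2(q)$ is also a base for $G$. It is worth stressing that this is \emph{not} a consequence of a group containment: when $f>1$ the group $G$ need not be a subgroup of $\PSigmaL_2(q)$ (for instance, $G$ may be of the form $\la G_0,\delta\phi^j\ra$), so the claim genuinely rests on the two preceding lemmas rather than on any trivial monotonicity of the base property.

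The plan is to reduce to edges incident to the fixed pair $\a=\{\la e_1\ra,\la e_2\ra\}$ and then apply Lemmas \ref{l:c2_cond} and \ref{l:c2_cond_PSigmaL} in tandem. Since $G_0$ is a nontrivial normal subgroup of the primitive group $G$, it acts transitively on $\O$, and as $G_0$ is contained in both $G$ and $\PSigmaL_2(q)$ it induces automorphisms of both Saxl graphs. Hence, given an arbitrary edge $\{\gamma_1,\gamma_2\}$ of $\Sigma(\PSigmaL_2(q))$, I would first choose $g\in G_0$ with $\gamma_1^g=\a$ and set $\b=\gamma_2^g$, so that $\{\a,\b\}$ is again an edge of $\Sigma(\PSigmaL_2(q))$. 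By the discussion preceding Lemma \ref{l:c2_cond}, a base for $\PSigmaL_2(q)$ forces both $1$-spaces of $\b$ off the coordinate axes, so $\b=\{\la e_1+be_2\ra,\la e_1+ce_2\ra\}$ for distinct nonzero $b,c$; Lemma \ref{l:c2_cond_PSigmaL} then guarantees that $b$ and $c$ satisfy conditions (i)--(iii). Feeding this into the sufficiency direction of Lemma \ref{l:c2_cond}, which applies to \emph{any} $G$ with $G\cap\PGL_2(q)=G_0$ and $q$ odd, I obtain that $\{\a,\b\}$ is a base for $G$, i.e.\ an edge of $\Sigma(G)$. Translating back by $g^{-1}\in G_0\leqs G$, which is an automorphism of $\Sigma(G)$, shows that the original edge $\{\gamma_1,\gamma_2\}$ lies in $\Sigma(G)$, completing the argument.

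There is no serious technical obstacle here: the corollary is essentially the juxtaposition of the \emph{necessary} conditions (i)--(iii) for $\PSigmaL_2(q)$ (Lemma \ref{l:c2_cond_PSigmaL}) with the fact that these same conditions are \emph{sufficient} for every admissible $G$ (Lemma \ref{l:c2_cond}). The only point requiring care is the conceptual one noted above, namely that one cannot simply appeal to $G\leqs\PSigmaL_2(q)$; the whole content of the reduction is that condition (iii), tied to the field automorphism $\phi$, is exactly what controls membership in a base uniformly across all the groups $G$ with $G\cap\PGL_2(q)=G_0$, irrespective of which diagonal-twisted field automorphism generates $G/G_0$.
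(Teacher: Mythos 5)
Your proof is correct and follows essentially the same route as the paper: reduce to a base $\{\a,\b\}$ with $\a=\{\la e_1\ra,\la e_2\ra\}$, extract conditions (i)--(iii) via Lemma \ref{l:c2_cond_PSigmaL}, and feed them into the sufficiency direction of Lemma \ref{l:c2_cond}. The only difference is that you spell out the $G_0$-transitivity reduction to edges through $\a$ (and correctly flag that the statement is not a trivial consequence of group containment in case (b)), both of which the paper leaves implicit.
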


\begin{proof}
Let $\{\a,\b\}$ be a base for $\PSigmaL_2(q)$ with $\a = \{\la e_1\ra, \la e_2\ra\}$ as usual. As explained in the discussion preceding Lemma \ref{l:c2_cond}, we have $\b = \{\la e_1+be_2\ra, \la e_1+ce_2\ra\}$ for nonzero scalars $b$ and $c$, which must satisfy the conditions in parts (i), (ii) and (iii) of Lemma \ref{l:c2_cond} (see Lemma \ref{l:c2_cond_PSigmaL}). Then Lemma \ref{l:c2_cond} implies that $\{\a,\b\}$ is a base for $G$ and the result follows.
\end{proof}

\begin{cor}\label{c:c2_cond}
Let $G=\PSigmaL_2(q)$ with $q$ odd and set 
\[
\beta = \{ \la e_1+be_2 \ra, \la e_1+ce_2\ra\},\;\; \gamma = \{ \la e_1+b'e_2\ra, \la e_1+c'e_2 \ra\}
\]
where $b,c,b',c'$ are nonzero scalars with $b \ne c$ and $b' \ne c'$. Then $\{\beta,\gamma\}$ is a base for $G$ if and only if
\[
	\{b',c'\}=\left\{\frac{b(c-b)+dc}{c-b+d}, \frac{b(c-b)+ec}{c-b+e}\right\}
\]
for scalars $d,e \in \mathbb{F}_q$ with $d,e \ne b-c$ satisfying conditions {\rm(i)--(iii)} in Lemma \ref{l:c2_cond}.
\end{cor}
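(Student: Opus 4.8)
The plan is to reduce the problem to the already-solved special case $\alpha=\{\la e_1\ra,\la e_2\ra\}$ treated in Lemma \ref{l:c2_cond_PSigmaL}, by transporting $\beta$ onto $\alpha$ with a suitable element of $G_0=\PSL_2(q)$. The key observation is that, since $G_0 \leqs G \leqs {\rm Sym}(\O)$, every $g\in G_0$ induces an automorphism of the Saxl graph $\Sigma(G)$: we have $G_{g\xi}=gG_\xi g^{-1}$ for all $\xi\in\O$, so $\{\xi,\eta\}$ is a base if and only if $\{g\xi,g\eta\}$ is a base. Hence if $g\in G_0$ satisfies $g\alpha=\beta$, then $\gamma$ is a base partner of $\beta$ if and only if $g^{-1}\gamma$ is a base partner of $\alpha$, and the latter is governed entirely by Lemma \ref{l:c2_cond_PSigmaL}.

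First I would exhibit such a $g$ explicitly. Parametrising the projective line by the coordinate $t$ in which $\la e_1\ra$, $\la e_2\ra$ and $\la e_1+te_2\ra$ correspond to $t=0$, $t=\infty$ and $t\ne 0,\infty$, I want the M\"obius map sending $0\mapsto b$ and $\infty\mapsto c$; the element realising $d\mapsto \frac{b(c-b)+cd}{c-b+d}$ does this and is represented (with respect to $\{e_1,e_2\}$ and the action convention fixed above) by a matrix of determinant $(c-b)^2$. The crucial point is that $(c-b)^2$ is a nonzero square, since $b\ne c$, so after scaling to determinant $1$ the projective image $g$ lies in $\PSL_2(q)=G_0$ and not merely in $\PGL_2(q)$; this is precisely what keeps the construction inside $G$. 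By construction $g\alpha=\beta$.

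With $g$ in hand I would conclude as follows. By Lemma \ref{l:c2_cond_PSigmaL}, the base partners of $\alpha$ are exactly the points $\{\la e_1+de_2\ra,\la e_1+ee_2\ra\}$ with $d,e$ nonzero and distinct satisfying conditions {\rm (i)--(iii)} of Lemma \ref{l:c2_cond} (a base partner cannot contain $\la e_1\ra$ or $\la e_2\ra$, as it would then share a $1$-space with $\alpha$, so both coordinates are automatically nonzero and finite). Applying $g$ to such a point and computing $g\la e_1+de_2\ra=\la e_1+b'e_2\ra$ gives $b'=\frac{b(c-b)+cd}{c-b+d}$, and likewise $c'=\frac{b(c-b)+ec}{c-b+e}$. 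This is exactly the displayed parametrisation of $\{b',c'\}$, and the restriction $d,e\ne b-c$ is forced because $d=b-c$ is the unique value sent by $g$ to $t=\infty$, i.e. to $\la e_2\ra$, which would make $\gamma$ contain $\la e_2\ra$ rather than have a finite second coordinate.

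The main obstacle is bookkeeping at the two distinguished points of the projective line rather than any deep difficulty. One must verify that $g$ really lies in $G_0$ (the square-determinant computation above), and one must track which auxiliary values of $d$ produce a degenerate $\gamma$: besides $d=b-c$ (giving $b'=\infty$), the value $d=b(b-c)/c$ is the unique value sent by $g$ to $t=0$ and hence yields $b'=0$. This last case is automatically ruled out by the standing hypothesis that $b',c'$ are nonzero, which is why it need not be listed among the stated restrictions. Finally, although condition {\rm (iii)} involves the field structure through $p^k$-th powers, no separate treatment of the field automorphism $\phi$ is needed: because $g$ is $\mathbb{F}_q$-linear, the reduction sends the pair $\{\beta,\gamma\}$ to $\{\alpha,g^{-1}\gamma\}$ and Lemma \ref{l:c2_cond_PSigmaL} applies verbatim, so the conditions are simply read off for the scalars $d$ and $e$.
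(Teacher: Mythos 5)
Your proof is correct and takes essentially the same route as the paper: both arguments transport $\beta$ to $\alpha$ via an explicit element $g\in G_0=\PSL_2(q)$ inducing the M\"obius map $d\mapsto \frac{b(c-b)+cd}{c-b+d}$ (the paper simply writes down the determinant-one matrix directly rather than rescaling one of determinant $(c-b)^2$), and then invoke Lemma \ref{l:c2_cond_PSigmaL} to read off the conditions on $d$ and $e$, with $d,e\ne b-c$ forced by the finiteness of the image coordinates. Your additional remark that the value $d=b(b-c)/c$ is excluded by the standing hypothesis $b',c'\ne 0$ is a correct point that the paper leaves implicit.
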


\begin{proof}
Since $G$ acts primitively on $\O$, it follows that the normal subgroup $G_0$ is transitive. Therefore, $\beta=\alpha^g$ for some $g\in G_0$ and we note that $g$ maps the set of neighbours of $\alpha$ in $\Sigma(G)$ to the set of neighbours of $\beta$. More precisely, we can take $g$ to be the image of the matrix 
\[
\begin{pmatrix}
1&(c-b)^{-1}\\
b&c(c-b)^{-1}
\end{pmatrix} \in {\rm SL}_{2}(q).
\]

Suppose $\{\b,\gamma\}$ is a base, so $\gamma = \delta^g$ for some neighbour $\delta$ of $\a$. By Lemma \ref{l:c2_cond_PSigmaL}, we have
$\delta = \{\la e_1+de_2\ra, \la e_1+ee_2\ra\}$ for scalars $d,e \in \mathbb{F}_q$ satisfying the conditions in (i)--(iii) of Lemma \ref{l:c2_cond} and by applying $g$ we get 
\[
\gamma = \{\la (1+d(c-b)^{-1})e_1+(b+dc(c-b)^{-1})e_2\ra, \la (1+e(c-b)^{-1})e_1+(b+ec(c-b)^{-1})e_2\ra\}.
\]
Here the coefficients $1+d(c-b)^{-1}$ and $1+e(c-b)^{-1}$ are nonzero, so $d,e \ne b-c$ and we deduce that $\gamma$ has the required form. 

Conversely, if $\gamma$ has the given form then $\gamma = \delta^g$ for some $\delta \in \O$ with $\{\a,\delta\}$ a base and it follows that $\{\b,\gamma\}$ is a base.
\end{proof}

\begin{cor}\label{c:c2_val}
Let $G=\PSigmaL_2(q)$ with $q$ odd and let $m$ be the number of non-squares in $\mathbb{F}_q$ that are not contained in any proper subfield of $\mathbb{F}_q$. Then $\Sigma(G)$ has valency $m(q-1)/2$ and thus $G$ has exactly $m/2f$ regular suborbits on $\O$.
\end{cor}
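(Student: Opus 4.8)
The plan is to compute the valency of $\Sigma(G)$ by counting the neighbours of the fixed vertex $\alpha = \{\la e_1\ra, \la e_2\ra\}$ and then to convert this into the number of regular suborbits via the relation $\text{valency} = r\cdot|H|$, where $r$ is the number of regular suborbits. By the discussion preceding Lemma \ref{l:c2_cond} together with Lemma \ref{l:c2_cond_PSigmaL}, a vertex $\beta$ is adjacent to $\alpha$ in $\Sigma(G)$ if and only if $\beta = \{\la e_1+be_2\ra, \la e_1+ce_2\ra\}$ for distinct nonzero scalars $b,c$ satisfying conditions (i)--(iii) of Lemma \ref{l:c2_cond}. Since the $1$-space $\la e_1+be_2\ra$ determines $b$ uniquely, the neighbours of $\alpha$ are in bijection with the unordered pairs $\{b,c\}$ of this type, so the first task is simply to count such pairs.

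First I would parametrise these pairs by the ratio $t = bc^{-1}$. Condition (i) is automatic (both scalars are nonzero), condition (ii) reads ``$-t$ is a non-square'', and condition (iii) reads ``$t$ lies in no proper subfield of $\mathbb{F}_q$'' (using the equivalence recorded just before Lemma \ref{l:c2_cond}); crucially, both depend only on $t$ and are invariant under $t\mapsto t^{-1}$ --- the former because $-t^{-1}$ and $-t$ differ by the square $t^{-2}$, the latter because subfields are closed under inversion. Writing $N$ for the number of admissible ratios $t$, I would count the ordered pairs $(b,c)$ as $N(q-1)$ (choose any nonzero $c$, then set $b=tc$) and divide by $2$; the inversion symmetry guarantees that admissible ratios occur in genuine pairs $\{t,t^{-1}\}$, with no fixed points, since $t=t^{-1}$ forces $t=\pm1$ and both are inadmissible ($t=1$ is barred by $b\ne c$, while $t=-1$ gives $-t=1$ a square). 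This yields $\text{valency} = N(q-1)/2$.

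The key step is then to identify $N$ with $m$. I would use the substitution $s=-t$: condition (ii) becomes ``$s$ is a non-square'', and since every subfield of $\mathbb{F}_q$ is closed under negation, ``$t=-s$ lies in no proper subfield'' is equivalent to ``$s$ lies in no proper subfield''. Hence the admissible ratios biject with the non-squares of $\mathbb{F}_q$ avoiding every proper subfield, of which there are exactly $m$, giving $N=m$. The one point demanding care is the value $t=1$ excluded by $b\ne c$: it corresponds to $s=-1$, and I would verify that this does not disturb the count --- when $f\geqs 2$ the scalar $-1$ lies in the prime subfield $\mathbb{F}_p$ and so is already absent from the set counted by $m$, making the correspondence an exact bijection.

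Finally I would convert the valency into the number of regular suborbits. Each neighbour $\beta$ of $\alpha$ has trivial stabiliser in $H$, namely $G_\alpha\cap G_\beta = 1$, so its $H$-orbit is regular of size $|H|$; conversely every regular suborbit consists of neighbours of $\alpha$, so the neighbourhood of $\alpha$ is exactly the disjoint union of all regular suborbits. Using $|H| = |H_0|\cdot|G:G_0| = (q-1)f$ (here $H_0 = D_{q-1}$ has order $q-1$ and $|G:G_0|=f$), I obtain $r = \text{valency}/|H| = (m(q-1)/2)/((q-1)f) = m/(2f)$, as claimed. I expect the main obstacle to be the bookkeeping in the middle paragraphs --- pinning down the inversion symmetry and the exact factor of $2$ between ordered and unordered pairs, and correctly handling the diagonal value $t=1$, which is the easiest place to introduce an off-by-one error --- rather than any deep new input.
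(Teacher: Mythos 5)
Your proof is correct and takes essentially the same route as the paper's: count the neighbours of $\alpha$ via the ratio $t=bc^{-1}$, identify the admissible ratios with the $m$ relevant non-squares through the substitution $t\mapsto -t$, and divide the $m(q-1)$ ordered pairs first by $2$ and then by $|H|=(q-1)f$. If anything you are more careful than the paper, which divides by $2$ and passes from ``$m$ admissible ratios'' to ``$m$ choices of $\beta$'' without comment; your observation that excluding $t=1$ is harmless only because $-1$ lies in the proper subfield $\mathbb{F}_p$ when $f\geqs 2$ pinpoints the one genuine edge case --- for $q=p$ the reduction preceding Lemma \ref{l:c2_cond} to pairs with $b,c$ nonzero also fails (the stabilising element ${\rm diag}(a,a^{-1})\phi$ constructed there is trivial when $f=1$), so the stated formula should be read with $f\geqs 2$ --- but this caveat applies equally to the paper's own argument and does not affect its use in Proposition \ref{p:psl2_c2reg}.
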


\begin{proof}
We consider the neighbours of $\a = \{ \la e_1 \ra, \la e_2 \ra\}$. Suppose $\beta=\{\la e_1+b_0e_2\ra,\la e_1+e_2\ra\}$. By Lemma \ref{l:c2_cond_PSigmaL}, $\{\a,\beta\}$ is a base if and only if $-b_0$ is a non-square that is not contained in any proper subfield of $\mathbb{F}_q$. Therefore, there are $m$ choices for $\b$. More generally, if $\beta=\{\la e_1+be_2\ra,\la e_1+ce_2\ra\}$ with $c \ne 0$, then $\{\alpha,\beta\}$ is a base if and only if $b=cb_0$ for some $b_0$ as above. Since there are $q-1$ choices for $c$ and we can interchange the two spaces comprising $\b$, we conclude that $\Sigma(G)$ has valency $m(q-1)/2$. Since $|H| = (q-1)f$, it follows that $G$ has precisely $m/2f$ regular suborbits on $\O$.
\end{proof}

We are now in a position to prove our first main result for the $\C_2$-actions of groups with socle $G_0 = {\rm L}_{2}(q)$. The following proposition extends Theorem \ref{t:cd} by establishing the main conjecture of \cite{SaxlGraph} for these groups.

\begin{prop}\label{p:psl2_c2star}
Property \eqref{e:star} holds if $G_0 = {\rm L}_{2}(q)$ and $H$ is of type ${\rm GL}_{1}(q) \wr S_2$.
\end{prop}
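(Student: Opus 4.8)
The plan is to push through the reductions already in place and then read adjacency off the coordinate description. First I would dispose of the small and extremal cases. By Proposition \ref{p:comp} we may assume $q>27$. If $q$ is even then $G_0={\rm PSL}_2(q)={\rm PGL}_2(q)$, and since ${\rm PGL}_2(q)<G$ would force $b(G)=3$ by \cite[Lemma 4.7]{Burness2020base}, the only base-two group is $G=G_0={\rm PGL}_2(q)$; the same observation shows that for $q$ odd the base-two groups are $G={\rm PGL}_2(q)$ together with those satisfying $G\cap{\rm PGL}_2(q)=G_0$. When $G={\rm PGL}_2(q)$ the graph $\Sigma(G)$ is the Johnson graph $J(q+1,2)$, in which two $2$-subsets of $\PG(1,q)$ are adjacent exactly when they meet in one point; since $q+1\geqs 5$, any two $2$-subsets admit a third meeting each of them in a single point, so \eqref{e:star} is immediate. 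For the remaining groups Corollary \ref{c:c2_min} shows that $\Sigma(\PSigmaL_2(q))$ is a spanning subgraph of $\Sigma(G)$, so any common neighbour in the former is a common neighbour in the latter. Hence it suffices to prove \eqref{e:star} for $G=\PSigmaL_2(q)$ with $q$ odd and $q>27$.

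Next I would invoke Theorem \ref{t:cd}: as $\Sigma(G)$ has diameter $2$, any two non-adjacent vertices already share a neighbour, so the entire problem reduces to the adjacent case, namely producing for each base $\{\alpha,\beta\}$ a vertex $\gamma$ with $\{\alpha,\gamma\}$ and $\{\beta,\gamma\}$ both bases. Using $G_0$-transitivity I set $\alpha=\{\langle e_1\rangle,\langle e_2\rangle\}$. The essential point is to interpret Lemma \ref{l:c2_cond_PSigmaL} via cross-ratios: writing a $1$-space other than $\langle e_2\rangle$ as $\langle e_1+ze_2\rangle$ and recording its coordinate $z$, conditions (ii) and (iii) say precisely that $\{\alpha,\{\langle e_1+se_2\rangle,\langle e_1+te_2\rangle\}\}$ is a base if and only if $-s/t$ is a non-square in $\mathbb{F}_q$ and $s/t$ lies in no proper subfield. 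Since the cross-ratio of $\{\langle e_1\rangle,\langle e_2\rangle\}$ and $\{s,t\}$ is $s/t$, adjacency of two arbitrary pairs becomes a condition on their cross-ratio. Call $\lambda$ \emph{admissible} if $-\lambda$ is a non-square and $\lambda$ is not contained in a proper subfield of $\mathbb{F}_q$; this set is stable under $\lambda\mapsto\lambda^{-1}$ and, because $-1\in\mathbb{F}_p$, its size equals the valency parameter $m$ of Corollary \ref{c:c2_val}. In these terms I must exhibit a pair $\gamma$ whose cross-ratios with both $\alpha$ and $\beta$ are admissible.

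Concretely I would parametrise the neighbours of $\alpha$ as $\{\langle e_1+se_2\rangle,\langle e_1+te_2\rangle\}$ with $s/t$ admissible, use Corollary \ref{c:c2_cond} to write the neighbours of $\beta$ as the images of such pairs under the fixed $g\in G_0$ with $\alpha^g=\beta$, and then search for scalars making both cross-ratios admissible at once; equivalently, one prescribes admissible target values $\lambda(\alpha,\gamma)$ and $\lambda(\beta,\gamma)$, which pins $\gamma$ down to finitely many candidates, and checks that a genuine pair $\gamma$ (disjoint from $\alpha$ and $\beta$) results. The main obstacle is meeting the two arithmetic conditions simultaneously. The square-class condition (ii) is the delicate one, since it constrains the two cross-ratios jointly; I expect to settle it either by an explicit choice of $\gamma$ or, failing a fully uniform construction, by a short count showing that the number of neighbours of $\beta$ whose cross-ratio with $\alpha$ falls in the wrong square class is strictly less than the valency $m(q-1)/2$. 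The subfield condition (iii) only excludes a lower-order set of values of $\lambda$ (those lying in $\mathbb{F}_{p^{f/\ell}}$ for primes $\ell\mid f$), which is negligible once $q$ is large; this is exactly why the cases $q\leqs 27$ were removed at the outset via Proposition \ref{p:comp}.
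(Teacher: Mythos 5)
Your reductions are all correct and match the paper's: eliminate $q\leqs 27$ via Proposition \ref{p:comp}, handle $G=\PGL_2(q)$ through the Johnson graph $J(q+1,2)$, pass to $G=\PSigmaL_2(q)$ via Corollary \ref{c:c2_min}, and use Theorem \ref{t:cd} to reduce to finding, for each base $\{\a,\b\}$, a vertex $\gamma$ adjacent to both. The cross-ratio reformulation of Lemma \ref{l:c2_cond_PSigmaL} is also a fair restatement. But at exactly the point where the real work begins you stop: you write that you ``expect to settle it either by an explicit choice of $\gamma$ or \ldots by a short count,'' and neither alternative is carried out. That explicit choice is the entire content of the paper's proof. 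Writing $\b=\{\la e_1+be_2\ra,\la e_1+ce_2\ra\}$, the paper takes $\gamma=\{\la e_1-be_2\ra,\la e_1-ce_2\ra\}$, notes $\{\a,\gamma\}$ is a base by Lemma \ref{l:c2_cond}, and then — this is the delicate part — verifies via Corollary \ref{c:c2_cond} that the scalars $d=\tfrac{2b(b-c)}{b+c}$ and $e=\tfrac{b^2-c^2}{2c}$ realise $\gamma$ as the image of a neighbour of $\a$ under the translating element $g$, that $-de^{-1}=-4(bc^{-1}+cb^{-1}+2)^{-1}$ is a non-square, and that $de^{-1}$ lies in no proper subfield (the last via an explicit computation with $\eta=bc^{-1}+cb^{-1}$ that crucially uses the hypothesis that $-bc^{-1}$ is a non-square to rule out the case $2k=f$). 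None of this is guessable from the setup, and without it you have only restated the problem.

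Your fallback counting argument also does not obviously work. The valency of $\Sigma(\PSigmaL_2(q))$ is $m(q-1)/2$ with $m\leqs (q-1)/2$, so the valency is less than half of $|\O|=\tfrac{1}{2}q(q+1)$ and the crude ``two neighbourhoods must overlap'' count fails. A refined count of neighbours of $\b$ whose cross-ratio with $\a$ is simultaneously admissible amounts to counting points on a curve subject to two quadratic-character conditions plus a subfield exclusion; this needs Weil-type estimates with explicit constants, is not ``short,'' and would still have to be reconciled with the threshold $q>27$ coming from Proposition \ref{p:comp}. As it stands the proposal is a correct framing with the central construction missing.
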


\begin{proof}
We may assume $q>27$. Recall that $b(G)=2$ if and only if $G$ does not contain $\PGL_2(q)$ as a proper subgroup. If $G=\PGL_2(q)$, then  $\Sigma(G)$ is isomorphic to the Johnson graph $J(q+1,2)$ and we immediately deduce that \eqref{e:star} holds (as noted in \cite[Example 3.9]{SaxlGraph}).
	
For the remainder, we may assume that $G\cap \PGL_2(q)=G_0$ and $q$ is odd. In view of Corollary \ref{c:c2_min}, we only need to consider the group $G = \PSigmaL_2(q)$. Fix $\alpha=\{\la e_1\ra, \la e_2\ra \}$ as before. By Theorem \ref{t:cd}, it suffices to show that if $\{\alpha,\beta\}$ is a base, then there exists $\gamma \in \O$ such that both $\{\alpha,\gamma\}$ and $\{\beta,\gamma\}$ are bases. 

By Lemma \ref{l:c2_cond_PSigmaL} we have $\beta=\{\la e_1+be_2\ra,\la e_1+ce_2\ra\}$, where $b,c \in \mathbb{F}_q$ are nonzero scalars such that $-bc^{-1}$ is a non-square and is not contained in any proper subfield of $\mathbb{F}_q$. Set $\gamma=\{\la e_1-be_2\ra,\la e_1-ce_2\ra \} \in \O$ and note that $\{\a,\gamma\}$ is a base by Lemma \ref{l:c2_cond}. By Corollary \ref{c:c2_cond}, $\{\beta,\gamma\}$ is a base if and only if there exists $d,e \in \mathbb{F}_q$ with $d,e \ne b-c$ such that 
\begin{equation}\label{e:uw}
	\{-b,-c\}=\left\{\frac{b(c-b)+dc}{c-b+d},\frac{b(c-b)+ec}{c-b+e}\right\}
\end{equation}
and $d,e$ satisfy the conditions in parts (i), (ii) and (iii) in Lemma \ref{l:c2_cond}. 
	
Set $d=\frac{2b(b-c)}{b+c}$ and $e=\frac{b^2-c^2}{2c}$. Then $d,e \ne b-c$, \eqref{e:uw} holds and $de \ne 0$. In addition,
	\[
	-de^{-1}=-bc^{-1}\left(\frac{2c}{b+c}\right)^2=-4(bc^{-1}+cb^{-1}+2)^{-1}
	\] 
	and we immediately deduce that $-de^{-1}$ is a non-square in $\mathbb{F}_q$. 

Finally, we claim that $de^{-1}$ is not contained in a proper subfield of $\mathbb{F}_q$. To do this, it suffices to show that $\eta=bc^{-1}+cb^{-1}$ is not contained in such a subfield. With this aim in mind, it will be useful to observe that 
\begin{align*}
\eta^{p^k}-\eta & =(bc^{-1})^{p^k}+(bc^{-1})^{-p^k}-bc^{-1}-(bc^{-1})^{-1} \\
& = (bc^{-1})^{-p^k}((bc^{-1})^{p^k+1}-1)((bc^{-1})^{p^k-1}-1)
\end{align*}
for $1 \leqs k < f$, so $\eta$ is contained in the subfield $\mathbb{F}_{p^k}$ of $\mathbb{F}_q$ if and only if this expression is $0$. Now since $b$ and $c$ satisfy the condition in part (iii) of Lemma \ref{l:c2_cond}, it follows that $(bc^{-1})^{p^k-1}-1 \ne 0$, whence $\eta \in \mathbb{F}_{p^k}$ if and only if $(bc^{-1})^{p^k+1}=1$. If the latter equality holds, then $bc^{-1}\in\mathbb{F}_{p^{2k}}$ and thus $2k=f$. In particular, this implies that both $-1$ and $bc^{-1}$ are squares, which contradicts (ii) in Lemma \ref{l:c2_cond}. 

This justifies the claim and we conclude that $d$ and $e$ satisfy the conditions in parts (i), (ii) and (iii) of Lemma \ref{l:c2_cond}. In particular, $\{\beta,\gamma\}$ is a base and the result follows.
\end{proof}

Next we turn to the problem of determining when $G$ has a unique regular suborbit on $\O$. We will need the following number-theoretic result, where $\phi$ and $\gamma= 0.57721...$ denote Euler's totient function and Euler's constant, respectively.

\begin{lem}\label{l:euler}
For every integer $n \geqs 3$,
\[
\phi(n) > \frac{n}{e^{\gamma}\log\log n+\frac{3}{\log\log n}}.
\]
\end{lem}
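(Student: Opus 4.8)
The final statement is Lemma~\ref{l:euler}, which asserts the explicit lower bound
\[
\phi(n) > \frac{n}{e^{\gamma}\log\log n+\frac{3}{\log\log n}}
\]
for every integer $n \geqs 3$. This is a classical and well-known inequality in analytic number theory, essentially due to Rosser and Schoenfeld, so the natural plan is to cite it rather than reprove it from scratch. Indeed, the cleanest approach is to invoke Theorem~15 of Rosser and Schoenfeld's 1962 paper \emph{Approximate formulas for some functions of prime numbers} (Illinois J.\ Math.\ \textbf{6}, 64--94), which establishes precisely this bound for all $n \geqs 3$. I would therefore present the proof as a one-line reference to that result, since it is exactly the stated inequality and reproving it would be a substantial and unnecessary detour from the paper's main line of argument.

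If instead a self-contained argument were wanted, the plan would be to derive the bound from Mertens' third theorem together with explicit error terms. The key identity is $\phi(n)/n = \prod_{p \mid n}(1 - 1/p)$, and the extremal case occurs when $n$ is (essentially) a primorial, i.e.\ a product of the first several primes, since spreading the prime factors over the smallest primes minimises the product. First I would reduce to primorials by a monotonicity/extremality observation: among all $n$ with a fixed number of distinct prime factors, or below a fixed bound, the minimum of $\phi(n)/n$ is attained at primorials. Then I would apply the effective form of Mertens' theorem, $\prod_{p \leqs x}(1 - 1/p)^{-1} > e^{\gamma}\log x$ with a controlled error, to bound $\prod_{p \mid n}(1-1/p)$ from below in terms of $\log\log n$.

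The main obstacle in a self-contained proof is controlling the error term in Mertens' estimate well enough to produce exactly the constant $3$ appearing in the denominator rather than some weaker constant, and handling the finitely many small values of $n$ (say $n < 10^{10}$ or wherever the asymptotic estimate takes over) by direct verification. This is exactly the delicate numerical work that Rosser and Schoenfeld carried out, which is why citing their Theorem~15 is by far the preferable route here. I expect the author takes precisely this path, stating the lemma as a convenient packaging of the Rosser--Schoenfeld bound for use in the subsequent counting arguments (for instance, in bounding the quantity $m$ of Corollary~\ref{c:c2_val} from below, where a lower bound on $\phi(q)$-type quantities is needed to guarantee the existence of enough suitable non-square scalars). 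Accordingly my proposed proof is simply: \emph{This is a well-known explicit estimate; see \cite[Theorem~15]{RosserSchoenfeld}.}
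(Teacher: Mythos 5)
Your proposal matches the paper exactly: the paper's entire proof of Lemma \ref{l:euler} is the single line ``See \cite[Theorem 15]{RS}'', i.e.\ a citation of Rosser and Schoenfeld's Theorem 15, which is precisely your primary plan. The additional sketch of a self-contained Mertens-type argument is unnecessary but does no harm.
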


\begin{proof}
See \cite[Theorem 15]{RS}.
\end{proof}

\begin{prop}\label{p:psl2_c2reg}
Suppose $G_0 = {\rm L}_{2}(q)$ and $H$ is of type ${\rm GL}_{1}(q) \wr S_2$. Then $G$ has a unique regular suborbit if and only if $G = {\rm PGL}_{2}(q)$ and $q \geqs 4$, $q \ne 5$.
\end{prop}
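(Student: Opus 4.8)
The plan is to single out $G=\PGL_2(q)$, which will account for the unique regular suborbit, and then to show that every other group in this $\C_2$-family has at least two. By Proposition \ref{p:comp}(ii) we may assume $q>27$, so in particular the maximality conditions $q\geqs 4$, $q\ne 5$ hold automatically. Throughout I write $r$ for the number of regular suborbits of $G$, and recall from \eqref{e:QG} that the valency of $\Sigma(G)$ equals $r|H|$, since the neighbours of a fixed vertex are precisely the points lying in regular $H$-orbits.

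First suppose $G=\PGL_2(q)$. By \cite[Example 2.5]{SaxlGraph}, $\Sigma(G)$ is the Johnson graph $J(q+1,2)$, which has valency $2(q-1)$. As $|H|=|D_{2(q-1)}|=2(q-1)$, this forces $r=2(q-1)/|H|=1$, establishing the forward implication.

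For the converse, assume $b(G)=2$ and $G\ne\PGL_2(q)$; the goal is to prove $r\geqs 2$. If $q$ is even then $G_0=\PGL_2(q)$, and \cite[Lemma 4.7]{Burness2020base} gives $b(G)=3$ as soon as $G_0<G$; hence no group $G\ne\PGL_2(q)$ with $b(G)=2$ arises and there is nothing to do. So assume $q$ is odd. Since $b(G)=2$ forces $\PGL_2(q)\not<G$, we have $G\cap\PGL_2(q)=G_0$, whence $G/G_0$ embeds in $\langle\ddot\phi\rangle\cong C_f$ and $|H|=(q-1)|G:G_0|\leqs (q-1)f$. If $G=G_0$ then $H=D_{q-1}$ and the expression for $Q(G)$ recorded in Remark \ref{r:asymp} gives, via \eqref{e:QG},
\[
r=\frac{(1-Q(G))|\O|}{|H|}=\frac{q+a}{4}\geqs 2,\qquad a\in\{5,7\}.
\]
Otherwise $G_0<G$, which forces $f\geqs 2$. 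By Corollary \ref{c:c2_min} the graph $\Sigma(\PSigmaL_2(q))$ is a subgraph of $\Sigma(G)$, so by Corollary \ref{c:c2_val} the valency $v$ of $\Sigma(G)$ satisfies $v\geqs m(q-1)/2$, where $m$ is the number of non-squares of $\mathbb{F}_q$ lying in no proper subfield. Therefore
\[
r=\frac{v}{|H|}\geqs \frac{m(q-1)/2}{(q-1)f}=\frac{m}{2f}.
\]

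It remains to prove $m\geqs 4f$, and this number-theoretic estimate is the main obstacle. Here I would observe that each of the $\phi(q-1)$ primitive elements of $\mathbb{F}_q^{\times}$ is a non-square (as $q-1$ is even) and generates $\mathbb{F}_q$ as a field, hence lies in no proper subfield; thus $m\geqs\phi(q-1)$. Combining the lower bound for $\phi(q-1)$ in Lemma \ref{l:euler} with the inequality $f\leqs\log q$ (valid since $q\geqs 3^f$), one checks that $\phi(q-1)\geqs 4f$ holds for all sufficiently large $q$, and the finitely many odd values $27<q$ below the resulting threshold are verified directly. This gives $r\geqs m/2f\geqs 2$ and completes the argument.
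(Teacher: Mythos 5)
Your proof is correct and follows essentially the same route as the paper: reduce to $q>27$ via Proposition \ref{p:comp}, observe that $\Sigma(\PGL_2(q))\cong J(q+1,2)$ gives $r=1$, and for the converse use Corollaries \ref{c:c2_min} and \ref{c:c2_val} to get $r\geqs m/2f$, then bound $m\geqs \phi(q-1)\geqs 4f$ via primitive elements and Lemma \ref{l:euler}. The separate treatment of $G=G_0$ via the formula in Remark \ref{r:asymp} is harmless but unnecessary, since the $m/2f$ bound already covers that case.
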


\begin{proof}
In view of Proposition \ref{p:comp}, we may assume $q>27$ and we recall that $G = {\rm PGL}_2(q)$ has a unique regular suborbit. For the remainder we may assume $G \cap {\rm PGL}_{2}(q) = G_0$ and our aim is to show that $G$ has at least two regular suborbits. By Corollary \ref{c:c2_min}, we may assume that $G = \PSigmaL_2(q)$, in which case $G$ has $m/2f$ regular suborbits by Corollary \ref{c:c2_val}, where $m$ is the number of non-squares in $\mathbb{F}_q$ that are not contained in any proper subfield of $\mathbb{F}_q$. Any primitive element of $\mathbb{F}_q$ has this property and there are $\phi(q-1)$ such elements in $\mathbb{F}_q$. By applying the lower bound in Lemma \ref{l:euler} we deduce that $\phi(q-1) \geqs 4f$ for all $q>27$ and the result follows.
\end{proof}

Finally, we turn to the clique number of $\Sigma(G)$. If $G = {\rm PGL}_{2}(q)$ then $\omega(G) = q$ since $\Sigma(G)$ is isomorphic to the Johnson graph $J(q+1,2)$, whence $\omega(G) \geqs q$ if $G = G_0$. For $5<q<1000$ we have used the computational approach described in Section \ref{ss:clique} to verify the bound $\omega({\rm P\Sigma L}_{2}(q)) \geqs 5$ (in view of Corollary \ref{c:c2_min}, this implies that $\omega(G) \geqs 5$ whenever $5<q<1000$ is odd and $G \cap {\rm PGL}_{2}(q) = G_0$). We expect $\omega(G) \geqs 5$ for all $q>5$, but we have not been able to prove this.

\begin{rem}\label{r:c2_clique}
Let us say more about the difficulties that arise when trying to construct a clique of size $5$ when $G = \PSigmaL_2(q)$. By Proposition \ref{p:psl2_c2star} we see that $\omega(G)\geqs 3$. More precisely, $\{\alpha,\beta,\gamma\}$ is a clique of size $3$, where 
\[
\alpha = \{\la e_1\ra,\la e_2\ra\}, \; \beta = \{\la e_1+be_2\ra,\la e_1+ce_2\ra\}, \; \gamma = \{\la e_1-be_2\ra,\la e_1-ce_2\ra\}
\]
and $b,c \in \mathbb{F}_{q}^{\times}$ satisfy the conditions in parts (i), (ii) and (iii) of Lemma \ref{l:c2_cond}. Similarly, if we choose different scalars $b',c' \in \mathbb{F}_q^{\times}$ then we can construct another clique $\{\a,\b',\gamma'\}$, where $\beta' = \{\la e_1+b'e_2\ra,\la e_1+c'e_2\ra\}$ and $\gamma' = \{\la e_1-b'e_2\ra,\la e_1-c'e_2\ra\}$. Then 
$\{\a,\b,\b',\gamma,\gamma'\}$ is a clique of size $5$ if $\{\beta, \beta'\}$ and $\{\beta, \gamma'\}$ are bases. So in view of Corollary \ref{c:c2_cond}, we need to find scalars $d,e \ne b-c$ satisfying conditions (i)--(iii) in Lemma \ref{l:c2_cond} such that 
	\[
	\{b',c'\}=\left\{\frac{b(c-b)+dc}{c-b+d}, \frac{b(c-b)+ec}{c-b+e}\right\},
	\] 
	together with another pair of scalars $d',e' \ne b-c$ satisfying the same conditions with 
	\[
	\{-b',-c'\}=\left\{\frac{b(c-b)+d'c}{c-b+d'}, \frac{b(c-b)+e'c}{c-b+e'}\right\}.
	\] 
Here the main difficulty arises in verifying the required conditions in Lemma \ref{l:c2_cond}. For example, if we fix $b'$ and $c'$, then we need to show that $de^{-1}$ is not contained in a proper subfield of $\mathbb{F}_q$, which is not an easy condition to check. At the same time, we also need to verify the corresponding condition for $d'e'^{-1}$, which is an additional complication.
\end{rem}

\subsection{$\C_3$-actions}\label{ss:psl2_c3}

In this section we assume $H$ is of type ${\rm GL}_{1}(q^2)$, so $H_0 = D_{2(q+1)/h}$ and $|\O| = \frac{1}{2}q(q-1)$, where $h=(2,q-1)$. By Proposition \ref{p:comp} we may assume $q>27$ and we note that \cite[Lemma 4.8]{Burness2020base} gives $b(G) \leqs 3$, with equality if and only if ${\rm PGL}_{2}(q) \leqs G$. Therefore, we may assume 
$q$ is odd and $G\cap \PGL_2(q)=G_0$, so either 
\begin{itemize}\addtolength{\itemsep}{0.2\baselineskip}
\item[{\rm (a)}] $G=\langle G_0,\phi^j\rangle$ for some $j$ in the range $0\leqs j<f$; or
\item[{\rm (b)}] $G=\langle G_0,\delta\phi^j\rangle$ with $0<j<f$ and $f/(f,j)$ even.
\end{itemize}

\vspace{2mm}

Following \cite{Burness2020base}, it will be helpful to identify $G_0$ with the unitary group $X_0 = \UU_2(q)$ and $H$ with a maximal subgroup of type $\GU_1(q)\wr S_2$. We may then identify $\O$ with the set of orthogonal pairs of nondegenerate $1$-dimensional subspaces of the natural module $U$ for $X_0$, which is defined over $\mathbb{F}_{q^2}$. As in the proof of \cite[Lemma 4.8]{Burness2020base}, fix an orthonormal basis $\{u,v\}$ for $U$ and set $\a = \{\la u \ra, \la v \ra \} \in \O$. For each nonzero scalar $b \in \mathbb{F}_{q^2}$ with $b^{q+1}\ne -1$ we define
\[
\omega_b=\{\la u+bv\ra,\la u-b^{-q}v\ra\} \in \O.
\]
Then 
\[
\O = \{\a\} \cup \{ \omega_{b} \,:\, b \in \mathbb{F}_{q^2}^{\times},\, b^{q+1} \ne -1\}
\]
and we note that $\omega_{b} = \omega_{-b^{-q}}$. We will abuse notation by writing $\phi$ for the field automorphism of $X_0$ that corresponds to the map $\eta \mapsto \eta^{p}$ on $\mathbb{F}_{q^2}$ and we will assume that  
\[
(au+bv)^{\phi} = a^pu + b^pv
\]
for all $a,b \in \mathbb{F}_{q^2}$. We define ${\rm \Sigma U}_2(q) = \la {\rm SU}_{2}(q),\phi\ra$ and ${\rm P\Sigma U}_{2}(q) = \la X_0, \phi \ra = X_0.f$, noting that $X_0 \cap \la \phi \ra = \la \phi^f \ra$. In this setting, the two cases we need to consider are as described in (a) and (b) above, with $G_0$ replaced by $X_0$. Note that in (b),  the diagonal automorphism $\delta$ is the image of a diagonal matrix ${\rm diag}(\l^{q-1},1) \in {\rm GU}_{2}(q)$ with respect to the basis $\{u,v\}$ for $U$, where $\mathbb{F}_{q^2}^{\times} = \la \l \ra$. 

We begin with the following result, which is the analogue of Lemma \ref{l:c2_cond} for the $\C_3$-actions we are considering here. Note that the sufficient condition in the lemma is equivalent to the non-containment of $b^{\frac{1}{2}(q+1)}$ in a proper subfield of $\mathbb{F}_{q^2}$.

\begin{lem}\label{l:c3_cond}
	Suppose $G\cap \PGL_2(q)=G_0$ with $q$ odd. Then $\{\alpha,\omega_b\}$ is a base for $G$ if 
	\begin{equation}\label{e:bcon}
	b^{\frac{1}{2}(q+1)(p^k-1)} \ne 1 
	\end{equation}
	for all $0<k<2f$.
\end{lem}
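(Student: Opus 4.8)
The plan is to transpose the argument of Lemma \ref{l:c2_cond} to the unitary model. It suffices to prove that the pointwise stabiliser of $\{\a,\omega_b\}$ in $G$ is trivial, so I would suppose that some element
\[
x = AB^i\phi^j \in \la \GU_2(q),\phi\ra
\]
fixes both $\a$ and $\omega_b$, where $A \in \SU_2(q)$, $B = {\rm diag}(\l^{q-1},1)$ is the lift of the diagonal automorphism $\delta$, $0 \leqs i < q+1$ and $0 \leqs j < 2f$ (recall that $\phi$ has order $2f$ on $\mathbb{F}_{q^2}$). Since $u^\phi = u$ and $v^\phi = v$, the field automorphism $\phi^j$ fixes each of $\la u\ra$ and $\la v\ra$, so $x$ fixes $\a = \{\la u\ra,\la v\ra\}$ if and only if the linear part $AB^i$ is monomial with respect to $\{u,v\}$; that is, diagonal or anti-diagonal. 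This is the precise analogue of the dichotomy in the proof of Lemma \ref{l:c2_cond}, and the goal is to show that \eqref{e:bcon} forces $x=\pm I_2$, hence trivial action on $\O$.

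Next I would split into four cases, according to whether $AB^i$ is diagonal or anti-diagonal and whether $x$ fixes or interchanges the two $1$-spaces $\la u+bv\ra$ and $\la u-b^{-q}v\ra$ comprising $\omega_b$. Writing $AB^i = {\rm diag}(a_1,a_2)$ with $a_1=a\l^{i(q-1)}$, $a_2=a^{-1}$ and $a^{q+1}=1$ in the diagonal case, and comparing coefficients exactly as in the derivations of \eqref{e:c21}--\eqref{e:c24}, each case produces a relation expressing the ratio $a_1/a_2$ (or its anti-diagonal counterpart $-a^2\l^{-i(q-1)}$) as a monomial in $b$, $b^{p^j}$ and $b^{-q}$; for instance the diagonal case fixing both spaces gives $a_1/a_2 = b^{p^j-1}$ and $a_1/a_2 = b^{-q(p^j-1)}$ simultaneously.

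The key step is to raise each such relation to the power $\tfrac12(q+1)$. Using $a^{q+1}=1$ (which annihilates the $\SU_2(q)$-scalar $a$) together with $\l^{(q^2-1)/2}=-1$ (as $\l$ is primitive), the left-hand side $(a_1/a_2)^{(q+1)/2}$ collapses to the sign $(-1)^i$, while the right-hand side becomes a power of $\b_0 := b^{\frac12(q+1)}$. In the prototypical case this yields the clean identity $\b_0^{\,p^j-1} = (-1)^i$; the swapping and anti-diagonal cases give analogous identities in which the exponent of $\b_0$ is $p^j+1$ or carries an extra Frobenius twist by $q$. By the remark preceding the statement, hypothesis \eqref{e:bcon} says precisely that $\b_0$ generates $\mathbb{F}_{q^2}$, i.e. lies in no proper subfield. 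Thus an identity $\b_0^{p^j-1}=1$ with $0<j<2f$ would place $\b_0$ in $\mathbb{F}_{p^j}$, a contradiction, forcing $j=0$; the surviving relations then degenerate to $a^2=1$ and $i=0$, so $x=\pm I_2$ as required.

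I expect the main obstacle to be the sign cases $\b_0^{\,p^j\pm 1}=-1$ and the companion factor $(-1)^i$ thrown off by the $\l$-powers, which cannot be excluded from the bare relations in isolation. Disposing of them requires invoking the standing hypothesis $G\cap \PGL_2(q)=G_0$: in case (b) this couples the diagonal exponent $i$ to the field exponent $j$ and imposes the arithmetic constraint that $f/(f,j)$ be even. As in the closing paragraphs of the proof of Lemma \ref{l:c2_cond}, a relation $\b_0^{\,p^j-1}=-1$ gives $\b_0^{\,p^{2j}}=\b_0$, hence $\b_0\in\mathbb{F}_{p^{2j}}$, and the constraints linking $i$ and $j$ then either contradict the subfield condition outright or force $2j\equiv 0\imod{2f}$, which again collapses $x$ to a scalar. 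Checking that this sign-and-coupling bookkeeping is watertight across all four cases, and that the Frobenius twist by $q$ in the swapping cases likewise drives $\b_0$ into a proper subfield, is the delicate part of the argument.
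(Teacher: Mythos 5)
Your proposal follows essentially the same route as the paper's proof: the same four-case analysis according to whether the linear part is diagonal or anti-diagonal and whether the two spaces of $\omega_b$ are fixed or swapped, the same device of raising each coefficient relation to the power $\frac{1}{2}(q+1)$ so that $a^{q+1}=1$ and $\lambda^{\frac{1}{2}(q^2-1)}=-1$ reduce everything to powers of $b^{\frac{1}{2}(q+1)}$, and the same squaring trick to absorb the residual signs into the subfield condition \eqref{e:bcon}. Two details in your bookkeeping need attention: you must normalise $j\ne f$ (since $\phi^f\in X_0$ it can be absorbed into the linear part), as otherwise the step ``$2j\equiv 0\imod{2f}$ forces $j=0$'' fails; and in the swapping cases the endgame after $i=j=0$ is the relation $b^{q+1}=\pm 1$, which is ruled out by $\omega_b\in\O$ (so $b^{q+1}\ne -1$) together with \eqref{e:bcon} (so $b^{q+1}\ne 1$), rather than by the hypothesis $G\cap\PGL_2(q)=G_0$, whose only role is to force $i=0$ once $j=0$.
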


\begin{proof}
Suppose $b$ satisfies the condition in \eqref{e:bcon} for all $0<k<2f$ and let us assume 
\[
x = AB^i\phi^j \in \la \GU_2(q),\phi\ra
\]
fixes $\a$ and $\omega_b$, where $A \in {\rm SU}_{2}(q)$, $B={\rm diag}(\l^{q-1},1)$, $0\leqs i<q+1$, $0 \leqs j < 2f$ with $j \ne f$, and $j>0$ if $i>0$. In order to prove that $\{\a,\omega_b\}$ is a base for $G$, it suffices to show that if $x$ fixes $\a$ and $\omega_b$, then $i=j = 0$ and $A = \pm I_2$. So let us assume $x$ fixes $\a$ and $\omega_b$, which means that  $A$ is either diagonal or anti-diagonal with respect to the basis $\{u,v\}$ for the natural ${\rm SU}_{2}(q)$-module $U$.
	
	First assume $A={\rm diag}(a,a^{-1})$ is diagonal, so $a^{q+1}=1$. If $x$ fixes the two spaces in $\omega_b$, then 
	\begin{align*}
	(u+bv)^x & = a\lambda^{i(q-1)}u+a^{-1}b^{p^j}v=\eta_1(u+bv)\\
	(u-b^{-q}v)^x & = a\lambda^{i(q-1)}u-a^{-1}b^{-qp^j}v=\eta_2(u-b^{-q}v)
	\end{align*}
	for some $\eta_1,\eta_2\in\mathbb{F}_{q^2}^{\times}$, whence 
	\begin{equation}\label{e:c31}
	a^2\lambda^{i(q-1)}=b^{p^j-1}=b^{q(1-p^j)}.
	\end{equation}
	Since $a^{q+1}=1$ we get $(b^{q+1})^{(p^j-1)}=1$, which implies $(b^{q+1})^{\frac{1}{2}(p^{2j}-1)}= 1$ and thus $2j=0$ or $2f$ are the only possibilities. But we are assuming $j\ne f$, hence $j=0$ and thus $i=0$. Therefore, \eqref{e:c31} gives $a^2=1$ and we conclude that $x=\pm I_2$.
	
	Similarly, if $x$ interchanges the spaces in $\omega_b$, then
	\begin{align*}
	(u+bv)^x & = a\lambda^{i(q-1)}u+a^{-1}b^{p^j}v = \eta_1(u-b^{-q}v) \\
	(u-b^{-q}v)^x & = a\lambda^{i(q-1)}u-a^{-1}b^{-qp^j}v=\eta_2(u+bv)
	\end{align*}
	for some $\eta_1,\eta_2\in\mathbb{F}_{q^2}^{\times}$ and we deduce that 
	\begin{equation}\label{e:c32}
		-a^2\lambda^{i(q-1)}=b^{p^j+q}=b^{-qp^j-1}.
	\end{equation}
	In particular, since $a^{q+1}=1$, it follows that 
	\[
	(b^{q+1})^{\frac{1}{2}(p^{j+f}+1)}=(-1)^{\frac{1}{2}(q+1)}\lambda^{\frac{1}{2}i(q^2-1)}=(-\lambda^{i(q-1)})^{\frac{1}{2}(q+1)}
	\]
	and thus
	\[
	(b^{q+1})^{\frac{1}{2}(p^{2j}-1)}=(b^{q+1})^{\frac{1}{2}(p^{2(j+f)}-1)}=(-\lambda^{i(q-1)})^{\frac{1}{2}(q+1)(p^{j+f}-1)}=1.
	\]
	Since \eqref{e:bcon} holds and $j \ne f$ we deduce that $j=0$ is the only possibility, implying $i=0$. Then \eqref{e:c32} gives $b^{q+1}=b^{-q-1}$ and thus $b^{q+1}=\pm 1$. By construction we have $b^{q+1} \ne -1$ (since $\omega_b \in \O$), while \eqref{e:bcon} implies that $b^{q+1} \ne 1$. Therefore, we have reached a contradiction and this case does not arise. 
	
	Now let us assume $x$ interchanges the two spaces in $\a$, so 
	\[
	A=\begin{pmatrix}
	0&a\\
	-a^{-1}&0
	\end{pmatrix}
	\]
	is anti-diagonal and $a^{q+1}=1$. If $x$ fixes the spaces in $\omega_b$, then  
	\begin{align*}
	(u+bv)^x & = ab^{p^j}u-a^{-1}\lambda^{i(q-1)}v = \eta_1(u+bv)\\
	(u-b^{-q}v)^x & = -ab^{-qp^j}u-a^{-1}\lambda^{i(q-1)}v = \eta_2(u-b^{-q}v)
	\end{align*}
	for some $\eta_1,\eta_2\in\mathbb{F}_{q^2}^{\times}$ and we get  
	\begin{equation}\label{e:c33}
	-a^2\lambda^{-i(q-1)}=b^{-p^j-1}=b^{q+qp^j}.
	\end{equation}
	This implies that $(b^{q+1})^{\frac{1}{2}(p^{2j}-1)}=1$, which leads to a contradiction as above. Finally, suppose $x$ interchanges the two spaces in $\omega_b$. Here we get  
	\begin{equation}\label{e:c34}
	a^2\lambda^{i(q-1)}=b^{q-p^j}=b^{qp^j-1}
	\end{equation}
	and thus $(b^{q+1})^{\frac{1}{2}(p^{f+j}-1)}=\lambda^{\frac{1}{2}i(q^2-1)}=\pm 1$ and so $(b^{q+1})^{\frac{1}{2}(p^{2(f+j)}-1)}=1$. It follows that $j=0$ and $i=0$, so $(b^{q+1})^{\frac{1}{2}(p^f-1)}=1$ and this is incompatible with \eqref{e:bcon}. 
\end{proof}

\begin{lem}\label{l:c3_cond_PSigmaL}
	Let $G=\PSigmaL_2(q)$ with $q$ odd. Then $\{\alpha,\omega_b\}$ is a base for $G$ if and only if (\ref{e:bcon}) holds for all $0<k<2f$.
\end{lem}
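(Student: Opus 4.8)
The plan is to leverage Lemma \ref{l:c3_cond}, which already gives one implication: if \eqref{e:bcon} holds for all $0<k<2f$ then $\{\alpha,\omega_b\}$ is a base. So I would only need to prove the converse, exactly as Lemma \ref{l:c2_cond_PSigmaL} was deduced from Lemma \ref{l:c2_cond}. Concretely, assuming \eqref{e:bcon} fails for some $0<k<2f$, I would exhibit an element $x = A\phi^j \ne \pm I_2$ in ${\rm \Sigma U}_{2}(q) = \la {\rm SU}_{2}(q),\phi\ra$ fixing both $\alpha$ and $\omega_b$, which shows $\{\alpha,\omega_b\}$ is not a base for $G = \PSigmaL_2(q)$. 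Throughout I would set $i=0$ (as $G$ has no diagonal automorphisms) and reuse the four relations \eqref{e:c31}--\eqref{e:c34} from the proof of Lemma \ref{l:c3_cond}.

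The construction I have in mind uses the diagonal, fix-both-spaces configuration governed by \eqref{e:c31}, taking $j = k$. The crux is the observation that the failure of \eqref{e:bcon} is precisely the solvability condition for $a^2 = b^{p^k-1}$ inside the cyclic group $T = \{a\in\mathbb{F}_{q^2}^\times : a^{q+1}=1\}$ of norm-one elements. Since $T$ has even order $q+1$, the squares $T^2$ form its index-two subgroup $\{y \in T : y^{(q+1)/2}=1\}$; writing $N = b^{q+1}\in\mathbb{F}_q^\times$, the hypothesis $N^{(p^k-1)/2}=1$ shows both $b^{p^k-1}\in T$ (its $(q+1)$-th power is $N^{p^k-1}=1$) and $b^{p^k-1}\in T^2$ (its $(q+1)/2$-th power is $N^{(p^k-1)/2}=1$). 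Thus I can choose $a\in T$ with $a^2=b^{p^k-1}$; the same relation $N^{p^k-1}=1$ gives $b^{p^k-1}=b^{q(1-p^k)}$, so both equalities of \eqref{e:c31} hold with $i=0$ and $j=k$, and $x = {\rm diag}(a,a^{-1})\phi^k$ then fixes $\alpha$ and $\omega_b$.

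The hard part will be confirming that $x \ne \pm I_2$ in $G$. When $k\not\equiv 0 \imod f$ this is immediate, since $\phi^k$ contributes a genuine field-automorphism factor. The delicate case is $k\equiv 0\imod f$ — forced, for instance, when $q=p$ is prime and $k=f$ is the only admissible index — because then $\phi^f\in X_0$, so $x$ lies in the socle and nontriviality cannot be read off from the field-automorphism part (this is exactly why $j=f$ is excluded in the parametrisation used for Lemma \ref{l:c3_cond}). To settle this uniformly I would track the induced action on $\Omega$: a short computation shows that $x$ sends $\omega_c\mapsto \omega_{c'}$ with $c' = a^{-2}c^{p^k}=b^{1-p^k}c^{p^k}$, which returns $c'=b$ when $c=b$, consistent with $x$ fixing $\omega_b$. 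Since $0<k<2f$, the map $c\mapsto b^{1-p^k}c^{p^k}$ is not the identity; nor is it the involution $c\mapsto -c^{-q}$; and these are the only two parameter maps fixing every $\omega_c$ (recall $\omega_c=\omega_{-c^{-q}}$). Hence $x$ moves some point of $\Omega$, and as $G$ acts faithfully on $\Omega$ we obtain $x\ne\pm I_2$, completing the converse and thus the lemma.
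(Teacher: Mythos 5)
Your overall strategy coincides with the paper's: take the forward implication from Lemma \ref{l:c3_cond} for free, and prove the converse by converting a failure of \eqref{e:bcon} at some $0<k<2f$ into an explicit nontrivial element of $G$ fixing $\alpha$ and $\omega_b$ via the relations \eqref{e:c31}--\eqref{e:c34}. For $k\ne f$ your construction is essentially the paper's, except that your detour through squares in the norm-one torus $T$ is unnecessary: the paper simply takes $a=b^{(p^k-1)/2}$, which already satisfies $a^{q+1}=b^{\frac{1}{2}(q+1)(p^k-1)}=1$ --- this is exactly the failed condition --- so the required square root in $T$ is handed to you explicitly. The genuine divergence is at $k=f$. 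There the paper switches to the anti-diagonal relation \eqref{e:c34} with $i=j=0$ and $a=b^{(q-1)/2}$, producing an element of ${\rm SU}_2(q)$ that interchanges the two lines of $\alpha$ and is therefore visibly not $\pm I_2$; you instead persist with the diagonal element ${\rm diag}(a,a^{-1})\phi^f$, which lies in the socle, and must then prove nontriviality by hand.

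That last step is where your argument has a gap as written. You assert that the induced parameter map $\sigma\colon c\mapsto b^{1-p^k}c^{p^k}$ is neither the identity nor $c\mapsto -c^{-q}$, and that ``these are the only two parameter maps fixing every $\omega_c$'', hence $x$ moves a point. But a permutation fixing every $\omega_c$ need only satisfy $\sigma(c)\in\{c,-c^{-q}\}$ \emph{pointwise}; it need not coincide globally with one of the two maps, so your dichotomy does not follow. The conclusion is nevertheless correct and the repair is a short count: for $k=f$ the equation $b^{1-q}c^{q}=c$ forces $c^{q-1}=b^{q-1}$, i.e.\ $c\in b\mathbb{F}_q^{\times}$ (exactly $q-1$ solutions), while $b^{1-q}c^{q}=-c^{-q}$ gives $c^{2q}=-b^{q-1}$, which has at most $\gcd(2q,q^2-1)=2$ solutions; since there are $q^2-q-2>q+1$ admissible parameters $c$ for $q>3$, some $\omega_c$ is moved and $x\ne 1$ in ${\rm Sym}(\Omega)$. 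With that counting argument inserted (or by simply adopting the paper's anti-diagonal element when $k=f$), your proof is complete.
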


\begin{proof}
	By Lemma \ref{l:c3_cond}, it suffices to show that if the condition in \eqref{e:bcon} fails to hold, then $\{\alpha,\omega_b\}$ is not a base for $G$. 
	So let us assume $k$ is an integer such that $0<k<2f$ and 
	\[
	b^{\frac{1}{2}(q+1)(p^k-1)}=1.
	\]
	If $k \ne f$ then by setting $j=k$ we deduce that \eqref{e:c31} holds with $a = b^{(p^k-1)/2}$ and $i=0$, otherwise \eqref{e:c34} holds with $a = b^{(q-1)/2}$ and $i=j = 0$. In both cases we conclude that 
	$\{\a,\omega_b\}$ is not a base and the result follows.
\end{proof}

\begin{rem}\label{r:c3g0}
By inspecting the proofs of Lemmas \ref{l:c3_cond} and \ref{l:c3_cond_PSigmaL}, we deduce that if $G = G_0$ and $q$ is odd, then $\{\a,\omega_b\}$ is a base for $G$ if and only if $b$ is a non-square in $\mathbb{F}_{q^2}$. The same criterion was established in the proof of \cite[Theorem 10]{BurnessHarper} for $q \equiv 3 \imod{4}$ and we note that a very similar argument can be used to reach the same conclusion when $q \equiv 1 \imod{4}$. In addition, we refer the reader to \cite[Lemma 7.9]{BurnessHarper} for a complete list of the subdegrees of $G=G_0$ when $q \geqs 11$ is odd.
\end{rem}

We can now reduce our main problems to the special case $G = \PSigmaL_2(q)$.

\begin{cor}\label{c:c3_min}
	Suppose $G\cap \PGL_2(q) = G_0$ and $q$ is odd. Then the Saxl graph $\Sigma(G)$ contains $\Sigma(\PSigmaL_2(q))$ as a subgraph.
\end{cor}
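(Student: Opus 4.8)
The plan is to argue exactly as in the proof of Corollary~\ref{c:c2_min}, exploiting the fact that $\Sigma(\PSigmaL_2(q))$ and $\Sigma(G)$ share the common vertex set $\O$. Since a subgraph on a fixed vertex set is determined by its edge set, it suffices to verify that every base for $\PSigmaL_2(q)$ is also a base for $G$; that is, every edge of $\Sigma(\PSigmaL_2(q))$ is an edge of $\Sigma(G)$.

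First I would reduce to edges incident to the distinguished vertex $\alpha = \{\la u\ra,\la v\ra\}$. As $G$ is primitive, its socle $X_0 = G_0$ is transitive on $\O$, and $G_0$ lies in both $G$ and $\PSigmaL_2(q)$. Given any base $\{\mu,\nu\}$ for $\PSigmaL_2(q)$, choose $g\in G_0$ with $\mu^g=\alpha$; then $\{\alpha,\nu^g\}$ is again a base for $\PSigmaL_2(q)$, and it suffices to show it is a base for $G$, since applying $g^{-1}\in G_0\leqs G$ transports the conclusion back to $\{\mu,\nu\}$. Using the parametrisation $\O=\{\alpha\}\cup\{\omega_b\}$ and noting $\nu^g\neq\alpha$, we have $\nu^g=\omega_b$ for some admissible scalar $b$, so the problem reduces to showing that if $\{\alpha,\omega_b\}$ is a base for $\PSigmaL_2(q)$, then it is a base for $G$.

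This last step is now immediate from the two preceding lemmas. By Lemma~\ref{l:c3_cond_PSigmaL}, the hypothesis that $\{\alpha,\omega_b\}$ is a base for $\PSigmaL_2(q)$ forces condition \eqref{e:bcon} to hold for all $0<k<2f$. But \eqref{e:bcon} is precisely the sufficient criterion supplied by Lemma~\ref{l:c3_cond}, which applies to \emph{any} $G$ with $G\cap\PGL_2(q)=G_0$ and $q$ odd, and in particular to the group at hand, whether it is of shape (a) or (b). Hence $\{\alpha,\omega_b\}$ is a base for $G$, and the result follows.

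I do not anticipate any genuine obstacle: the substantive work has already been carried out in Lemmas~\ref{l:c3_cond} and~\ref{l:c3_cond_PSigmaL}, and the corollary simply juxtaposes the ``necessary and sufficient for $\PSigmaL_2(q)$'' statement with the ``sufficient for all $G$'' statement. The only point requiring a moment's care is the vertex-transitivity reduction to edges through $\alpha$, which is legitimate precisely because the transporting element can be taken in the common subgroup $G_0$. It is worth emphasising that in case (b) the group $G$ is \emph{neither} contained in \emph{nor} contains $\PSigmaL_2(q)$ — indeed, $\PSigmaL_2(q)\leqs G$ would force $\delta\in G$, contradicting $G\cap\PGL_2(q)=G_0$ — so one cannot invoke a mere subgroup inclusion, and the comparison must proceed through the edge criteria of the lemmas rather than through any containment of the groups themselves.
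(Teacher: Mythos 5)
Your proposal is correct and follows essentially the same route as the paper: reduce to edges through $\alpha$ via the transitivity of the common socle, then combine the necessary-and-sufficient criterion of Lemma \ref{l:c3_cond_PSigmaL} with the sufficiency statement of Lemma \ref{l:c3_cond}. The paper's proof is a one-line version of exactly this argument (leaving the reduction to edges through $\alpha$ implicit), and your closing observation that $\PSigmaL_2(q)$ need not be contained in $G$ in case (b) correctly identifies why one must compare edge criteria rather than invoke a subgroup inclusion.
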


\begin{proof}
	If $\{\alpha,\omega_b\}$ is a base for $\PSigmaL_2(q)$, then Lemma \ref{l:c3_cond_PSigmaL} implies that \eqref{e:bcon} holds and thus $\{\alpha,\omega_b\}$ is a base for $G$ by Lemma \ref{l:c3_cond}.
\end{proof}

The following technical result is a key observation. 

\begin{cor}\label{c:c3_cond}
	Let $G=\PSigmaL_2(q)$ with $q$ odd and let $b,c \in \mathbb{F}_{q^2}^{\times}$ be scalars such that $b^{q+1}, c^{q+1} \ne -1$ and $c \not\in \{b,-b^{-q}\}$. Then $\{\omega_b,\omega_{c}\}$ is a base for $G$ if and only if 
\[
\frac{ba_1^{-2}(b+b^{-q})+b^{-q}d}{a_1^{-2}(b+b^{-q})-d} \in \{c,-c^{-q}\}
\]
for scalars $a_1,d \in \mathbb{F}_{q^2}$ satisfying all of the following conditions:
	\begin{itemize}\addtolength{\itemsep}{0.2\baselineskip}
\item[{\rm (i)}] $a_1^{q+1} = 1+b^{q+1}$;	
\item[{\rm (ii)}] $d^{q+1}\ne -1$ and $d \not\in \{a_1^{-2}(b+b^{-q}),-b^{q+1}a_1^{-2}(b+b^{-q})\}$;
\item[{\rm (iii)}] $d^{\frac{1}{2}(q+1)(p^k-1)} \ne 1$ for all $0<k<2f$.
\end{itemize}
\end{cor}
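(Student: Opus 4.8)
The plan is to follow the template of Corollary~\ref{c:c2_cond}, exploiting the transitivity of the socle $X_0$ on $\O$ (which holds since $G$ is primitive) to carry the neighbourhood of $\a$ in $\Sigma(G)$ onto the neighbourhood of $\omega_b$ by a single element of $G_0$. Since every element of $G$ induces an automorphism of $\Sigma(G)$, mapping bases to bases, this reduces the problem to understanding the neighbours of $\a$, which are already described by Lemma~\ref{l:c3_cond_PSigmaL}.

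First I would produce an explicit $g \in \SU_2(q)$ with $\a^g = \omega_b$. Since $\{u,v\}$ is orthonormal, the vectors $u+bv$ and $u-b^{-q}v$ spanning $\omega_b$ are orthogonal, with norms $1+b^{q+1}$ and $1+b^{-(q+1)}$ respectively. Choosing $a_1$ with $a_1^{q+1}=1+b^{q+1}$—this is condition~(i), and such $a_1$ exists because the norm map $\mathbb{F}_{q^2}^{\times}\to\mathbb{F}_q^{\times}$ is onto while $1+b^{q+1}\in\mathbb{F}_q^{\times}$ (as $b^{q^2}=b$ and $b^{q+1}\ne -1$)—one normalises $u+bv$ to a vector of norm $1$, and the remaining entries of $g$ are then forced by unitarity together with $\det g=1$. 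Concretely,
\[
g=\begin{pmatrix} a_1^{-1} & -\dfrac{a_1}{b+b^{-q}} \\[1mm] a_1^{-1}b & \dfrac{a_1 b^{-q}}{b+b^{-q}} \end{pmatrix},
\]
whose columns span the two $1$-spaces of $\omega_b$. As $g\in G_0\leqs G$, left multiplication by $g$ maps the neighbours of $\a$ bijectively onto the neighbours of $\omega_b$. Hence $\{\omega_b,\omega_c\}$ is a base if and only if $\omega_c=\omega_d^{\,g}$ for some neighbour $\omega_d$ of $\a$, and by Lemma~\ref{l:c3_cond_PSigmaL} these neighbours are exactly the $\omega_d\in\O$ for which $d$ satisfies \eqref{e:bcon} for all $0<k<2f$, i.e.\ condition~(iii).

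The core of the argument is then a direct computation of $\omega_d^{\,g}$. Applying $g$ to the coordinate vector $(1,d)^{\top}$ of $\la u+dv\ra$ and dividing through by the first entry writes the image as $\la u+c'v\ra$ with
\[
c'=\frac{ba_1^{-2}(b+b^{-q})+b^{-q}d}{a_1^{-2}(b+b^{-q})-d},
\]
after substituting the second-column scale $-a_1/(b+b^{-q})$ and clearing $a_1$. Since $g$ preserves the Hermitian form, $\omega_d^{\,g}$ is an orthogonal pair of nondegenerate $1$-spaces, so it equals $\omega_{c'}$; using $\omega_{c'}=\omega_{-c'^{-q}}$, the equality $\omega_c=\omega_d^{\,g}$ is equivalent to $c'\in\{c,-c^{-q}\}$, which is the displayed membership condition.

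Finally I would reconcile the side conditions. The requirement $d^{q+1}\ne -1$ in~(ii) is just $\omega_d\in\O$. The two excluded values $d_1=a_1^{-2}(b+b^{-q})$ and $d_2=-b^{q+1}a_1^{-2}(b+b^{-q})$ are precisely those making the denominator and numerator of $c'$ vanish; a short check using $a_1^{q+1}=1+b^{q+1}$ gives $d_1^{q+1}=b^{-(q+1)}$ and hence $d_2=-d_1^{-q}$, so $\omega_{d_1}=\omega_{d_2}$ is the single point $\a^{g^{-1}}$, whose image under $g$ is $\a$ rather than an admissible $\omega_c$. Discarding them, and noting that the hypotheses $c^{q+1}\ne -1$ and $c\notin\{b,-b^{-q}\}$ guarantee that $\omega_c$ is a point of $\O$ distinct from $\omega_b$, yields the stated biconditional. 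I expect the main obstacle to be purely computational: executing the matrix action on both $1$-spaces of $\omega_d$, simplifying $c'$ into the asserted closed form, and correctly matching the degenerate scalars with condition~(ii); no new conceptual input beyond Lemmas~\ref{l:c3_cond} and~\ref{l:c3_cond_PSigmaL} and the template of Corollary~\ref{c:c2_cond} is required.
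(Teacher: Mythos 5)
Your proposal is correct and follows essentially the same route as the paper: the same explicit matrix $g\in\SU_2(q)$ with $\a^g=\omega_b$ (the paper writes its second column as $a_2^{-1}(u-b^{-q}v)$ with $a_2=-(b+b^{-q})a_1^{-1}$, which is exactly your entry $-a_1/(b+b^{-q})$), the same transport of the neighbourhood of $\a$ via Lemma \ref{l:c3_cond_PSigmaL}, and the same computation of $\la u+dv\ra^g$ yielding the displayed fraction, with the excluded values in (ii) arising as the vanishing of its numerator and denominator. Your extra observation that $d_2=-d_1^{-q}$, so that the two excluded scalars give the single point $\a^{g^{-1}}$, is a correct refinement of the paper's brief remark that $\omega_c\ne\a$ forces both coefficients to be nonzero.
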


\begin{proof}
Choose $a_1 \in \mathbb{F}_{q^2}$ such that $a_1^{q+1} = 1+b^{q+1}$ and set $a_2 = -(b+b^{-q})a_1^{-1}$ (note that $a_1$ exists since $1+b^{q+1} \in \mathbb{F}_q$). Then $a_2^{q+1} = 1+b^{-(q+1)}$ and we deduce that both $a_1^{-1}(u+bv)$ and $a_2^{-1}(u-b^{-q}v)$ are unit vectors. Let $g \in X_0 = {\rm U}_2(q)$ be the image of the matrix
	\[
	A=
	\begin{pmatrix}
	a_1^{-1}&a_2^{-1}\\
	ba_1^{-1}&-b^{-q}a_2^{-1}
	\end{pmatrix} \in {\rm SU}_{2}(q),
	\]
	which is expressed in terms of the basis $\{u,v\}$ for $U$. Note that 
	$\alpha^g = \omega_b$.
	
	First assume $\{\omega_b,\omega_{c}\}$ is a base for $G$, so $\omega_{c} = \omega_d^g$ for some neighbour $\omega_d$ of $\alpha$ in $\Sigma(G)$. Then $d^{q+1} \ne -1$ and Lemma \ref{l:c3_cond_PSigmaL} implies that $d$ satisfies the condition in (iii). By applying $g$ we get
	\[
	(u+dv)^g = (a_1^{-1}+a_2^{-1}d)u+(ba_1^{-1}-b^{-q}a_2^{-1}d)v.
	\]
	Since $\omega_c\ne \alpha$, the coefficients of $u$ and $v$ in this expression are nonzero and we deduce that $d$ satisfies the remaining conditions in (ii). In particular,  
	\begin{align*}
	\langle u+dv\rangle^g & = \left\la u+\frac{ba_1^{-1}a_2-b^{-q}d}{a_1^{-1}a_2+d}v\right\ra
	= \left\la u+\frac{ba_1^{-2}(b+b^{-q})+b^{-q}d}{a_1^{-2}(b+b^{-q})-d}v\right\ra \\
	\langle u-d^{-q}v\rangle ^g & = \left\la u+\frac{ba_1^{-2}(b+b^{-q})-b^{-q}d^{-q}}{a_1^{-2}(b+b^{-q})+d^{-q}}v\right\ra
	\end{align*}
	and we conclude that if $\{\omega_b,\omega_c\}$ is a base for $G$ then all of the required conditions are satisfied.
		
	Conversely, if $c$ has the given form for scalars $a_1$ and $d$ satisfying all of the given conditions, then $\{\alpha,\omega_d\}$ is a base for $G$ (via the condition in (iii)) and $\omega_c = \omega_d^g$ for some $g \in G_0$ with $\omega_b = \a^g$. Therefore, $\{\omega_b,\omega_c\}$ is also a base for $G$.
\end{proof}

We are now in a position to extend Theorem \ref{t:cd} by establishing \eqref{e:star} for the $\C_3$-actions of groups with socle ${\rm L}_2(q)$.

\begin{prop}\label{p:psl2_c3star}
Property \eqref{e:star} holds if $G_0 = {\rm L}_{2}(q)$ and $H$ is of type ${\rm GL}_{1}(q^2)$.
\end{prop}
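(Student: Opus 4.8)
The plan is to mirror the argument used for the $\C_2$-actions in Proposition~\ref{p:psl2_c2star}. First I would dispose of the small cases and reduce the group: by Proposition~\ref{p:comp} we may assume $q>27$, and since $b(G)=2$ forces $q$ odd with $G\cap\PGL_2(q)=G_0$, Corollary~\ref{c:c3_min} shows that $\Sigma(\PSigmaL_2(q))$ is a subgraph of $\Sigma(G)$ on the same vertex set $\O$. As property~\eqref{e:star} asserts only the \emph{existence} of a common neighbour, it is inherited by any supergraph on the same vertices, so it suffices to establish~\eqref{e:star} for $G=\PSigmaL_2(q)$. By Theorem~\ref{t:cd} the graph $\Sigma(G)$ already has diameter~$2$, so every pair of non-adjacent vertices has a common neighbour; using the transitivity of $X_0$ to move one endpoint of an edge to $\alpha=\{\la u\ra,\la v\ra\}$, the only remaining task is: given a base $\{\alpha,\omega_b\}$ (so $b$ satisfies~\eqref{e:bcon}), produce $\gamma\in\O$ adjacent to both $\alpha$ and $\omega_b$.

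By analogy with the $\C_2$ case, where $\gamma$ was obtained by negating the relevant scalars, the natural candidate is $\gamma=\omega_{-b}$. I would first check that $\{\alpha,\omega_{-b}\}$ is a base: since $q$ is odd the exponent $\tfrac12(q+1)(p^k-1)$ is even, so $(-b)^{\frac12(q+1)(p^k-1)}=b^{\frac12(q+1)(p^k-1)}$ and~\eqref{e:bcon} for $-b$ is equivalent to~\eqref{e:bcon} for $b$ (this also shows $b^{q+1}\ne 1$, whence $\omega_{-b}\ne\omega_b$). The substance is to verify that $\{\omega_b,\omega_{-b}\}$ is a base, which I would do via Corollary~\ref{c:c3_cond} with $c=-b$. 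Choosing $a_1$ with $a_1^{q+1}=1+b^{q+1}$ and solving the defining relation for $c=-b$ yields the explicit scalar
\[
d = 2a_1^{-2}\,\frac{b(b^{q+1}+1)}{b^{q+1}-1}.
\]
Conditions (i) and (ii) of Corollary~\ref{c:c3_cond} are then routine: (i) holds by the choice of $a_1$, and for (ii) the requirement $d^{q+1}\ne -1$ together with the two excluded values for $d$ all reduce to $b^{q+1}\ne -1$, which holds since $\omega_b\in\O$, while $b^{q+1}\ne 1$ from~\eqref{e:bcon} makes $d$ well defined and $\gamma\ne\omega_b$.

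The crux, and the step I expect to be the main obstacle, is condition~(iii): that $d^{\frac12(q+1)(p^k-1)}\ne 1$ for all $0<k<2f$, equivalently that $\theta:=d^{(q+1)/2}$ generates $\mathbb{F}_{q^2}$. Writing $\beta=b^{(q+1)/2}$, using $\beta^2=b^{q+1}\in\mathbb{F}_q$ and evaluating $(q+1)/2$-th powers of elements of $\mathbb{F}_q$ via the quadratic character, I would compute
\[
\theta = d^{(q+1)/2} = \pm\frac{2\beta}{\beta^2-1}.
\]
Since $\{\alpha,\omega_b\}$ is a base, $\beta$ generates $\mathbb{F}_{q^2}$, and a direct manipulation gives $\theta^{p^k}=\theta$ if and only if $(\beta-\beta^{p^k})(\beta^{p^k+1}+1)=0$. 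The first factor never vanishes for $0<k<2f$ because $\beta$ generates $\mathbb{F}_{q^2}$; ruling out the second factor is the delicate point. Here $\beta^{p^k+1}=-1$ forces $\beta^{2(p^k+1)}=1$, and since $p$ is odd we have $2(p^k+1)\mid p^{2k}-1$, so $\beta\in\mathbb{F}_{p^{2k}}\cap\mathbb{F}_{p^{2f}}=\mathbb{F}_{p^{2\gcd(k,f)}}$, which is incompatible with $\beta$ generating $\mathbb{F}_{q^2}$ unless $k=f$; and for $k=f$ the fact that $\beta\notin\mathbb{F}_q$ gives $\beta^q=-\beta$, so $\beta^{p^f+1}=\beta^{q+1}=-\beta^2\ne -1$. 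This establishes~(iii), so $\{\omega_b,\omega_{-b}\}$ is a base, $\gamma=\omega_{-b}$ is the required common neighbour, and~\eqref{e:star} follows. The main difficulty throughout is precisely this number-theoretic verification of the subfield condition, exactly as in the $\C_2$ analysis where one must control $\eta=bc^{-1}+cb^{-1}$.
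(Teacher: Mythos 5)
Your proposal is correct and follows essentially the same route as the paper: reduce to $G=\PSigmaL_2(q)$ via Corollary \ref{c:c3_min}, invoke Theorem \ref{t:cd} to reduce to finding a common neighbour of an adjacent pair, take $\gamma=\omega_{-b}$, and verify the hypotheses of Corollary \ref{c:c3_cond} with exactly the same scalar $d$ (your $2a_1^{-2}b(b^{q+1}+1)/(b^{q+1}-1)$ is the paper's $2ba_1^{-2}(b+b^{-q})/(b-b^{-q})$). The only differences are cosmetic: you check $d^{q+1}\ne-1$ by direct computation rather than the paper's geometric argument, and your way of excluding $\beta^{p^k+1}=-1$ (via $2(p^k+1)\mid p^{2k}-1$ and $\beta^q=-\beta$) is a slightly cleaner packaging of the same subfield argument.
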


\begin{proof}
We may assume $q>27$ (see Proposition \ref{p:comp}) and we recall that $b(G)=2$ if and only if $q$ is odd and $G\cap \PGL_2(q) = G_0$. In view of Corollary \ref{c:c3_min}, we may assume that $G = \PSigmaL_2(q)$. By Theorem \ref{t:cd}, it suffices to show that if $\{\a,\omega_b\}$ is a base for $G$, then there exists $c \in \mathbb{F}_{q^2}^{\times}$ with $c^{q+1}\ne -1$ such that both $\{\a,\omega_{c}\}$ and $\{\omega_b,\omega_{c}\}$ are also bases. Note that $b^{q+1} \ne -1$ and $b$ satisfies the condition in \eqref{e:bcon} for all $0<k<2f$ (see Lemma \ref{l:c3_cond_PSigmaL}).

We claim that all of the above properties hold with $c = -b$. Clearly, we have $c^{q+1} = b^{q+1} \ne -1$ and $c^{\frac{1}{2}(q+1)(p^k-1)} \ne 1$ for all $0<k<2f$, so $\{\alpha,\omega_{c}\}$ is a base. It remains to prove that $\{\omega_b,\omega_{c}\}$ is a base.

As in Corollary \ref{c:c3_cond}, fix a scalar $a_1 \in \mathbb{F}_{q^2}^{\times}$ such that $a_1^{q+1} = 1+b^{q+1}$ and set 
\[
d = \frac{2ba_1^{-2}(b+b^{-q})}{b-b^{-q}} \in \mathbb{F}_{q^2}^{\times}.
\]
Then 
\[
c = \frac{ba_1^{-2}(b+b^{-q})+b^{-q}d}{a_1^{-2}(b+b^{-q})-d}
\]
and it remains to show that $d$ satisfies all the conditions in parts (ii) and (iii) of Corollary \ref{c:c3_cond}.

If $d\in \{a_1^{-2}(b+b^{-q}),-b^{q+1}a_1^{-2}(b+b^{-q})\}$ then $b^{q+1}=-1$, which is a contradiction. In addition, if we write $\alpha^g = \omega_b$ as in the proof of Corollary \ref{c:c3_cond}, then $\la u+dv\ra = \la u+cv\ra^{g^{-1}}$ and $\la u-d^{-q}v\ra = \la u-c^{-q}v\ra^{g^{-1}}$. Since $c^{q+1}\ne -1$, we have $u+cv\ne u-c^{-q}v$ and thus $u+dv\ne u-d^{-q}v$. Therefore, $d^{q+1}\ne -1$ and we conclude that $d$ satisfies all of the conditions in part (ii) of Corollary \ref{c:c3_cond}.

Finally, we need to show that 
\[
d^{\frac{1}{2}(q+1)} =\pm 2\left(b^{\frac{1}{2}(q+1)} - b^{-\frac{1}{2}(q+1)}\right)^{-1}
\]
is not contained in a proper subfield of $\mathbb{F}_{q^2}$. Set $e = b^{\frac{1}{2}(q+1)}$, which is not in a proper subfield by Lemma \ref{l:c3_cond_PSigmaL}, and note that it suffices to show that $e-e^{-1}$ is also not contained in a proper subfield. Fix an integer $0<k<2f$ and observe that 
\[
(e-e^{-1})^{p^k}-(e-e^{-1})=e^{-p^k}(e^{p^k+1}+1)(e^{p^k-1}-1),
\]
so $e-e^{-1} \in \mathbb{F}_{p^k}$ if and only if this expression is $0$. In view of \eqref{e:bcon}, this holds if and only if $e^{p^k+1} = -1$. So let us assume this relation holds. Then $e^{p^{2k}-1}=(-1)^{p^k-1} = 1$ and thus $2k = 0$ or $2f$. If $2k=2f$ then $k = f$ and so $e^{q+1} = e^{p^k+1} = -1$. This implies that
\[
-1 = e^{q+1} = b^{\frac{1}{2}(q+1)^2} = b^{\frac{1}{2}(q^2+1)}b^q = -b^{q+1}
\] 
and so $b$ is a square, which is incompatible with \eqref{e:bcon}. Therefore $k = 0$, so $e^2 = -1$
and $e-e^{-1} = -2e^{-1}$. The result now follows since Lemma \ref{l:c3_cond_PSigmaL} implies that $e^{-1}$ is not contained in a proper subfield of $\mathbb{F}_{q^2}$.
\end{proof}

\begin{prop}\label{p:psl2_c3reg}
Suppose $G_0 = {\rm L}_{2}(q)$ and $H$ is of type ${\rm GL}_{1}(q^2)$. Then $G$ has a unique regular suborbit if and only if $G= {\rm L}_{2}(5)$ or ${\rm L}_{2}(9).2 = {\rm M}_{10}$.
\end{prop}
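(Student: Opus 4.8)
The plan is to follow the strategy already used to prove Proposition \ref{p:psl2_c2reg} in the $\C_2$ setting, transporting it to the $\C_3$ actions via the tools developed in this subsection. First, Proposition \ref{p:comp} disposes of all cases with $q \leqs 27$, and these yield precisely the two exceptions ${\rm L}_2(5)$ and ${\rm L}_2(9).2 = {\rm M}_{10}$ listed in the statement, so I may assume $q > 27$. Recall from the discussion at the start of Section \ref{ss:psl2_c3} (based on \cite[Lemma 4.8]{Burness2020base}) that $b(G) = 2$ forces $q$ to be odd and $G \cap \PGL_2(q) = G_0$; if instead $\PGL_2(q) \leqs G$ then $b(G) = 3$ and $G$ has \emph{no} regular suborbit, so such groups are irrelevant here. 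Since both exceptions occur with $q \leqs 27$, it therefore suffices to show that every group $G$ with $q > 27$, $q$ odd and $G \cap \PGL_2(q) = G_0$ has \emph{at least two} regular suborbits.

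The next step is to reduce to $G = \PSigmaL_2(q)$, exactly as in the $\C_2$ case. By Corollary \ref{c:c3_min} (whose hypothesis covers all the relevant $G$, including those of shape $\la G_0, \delta\phi^j\ra$) we have $\Sigma(\PSigmaL_2(q)) \subseteq \Sigma(G)$, so the valency of $\Sigma(G)$ is at least that of $\Sigma(\PSigmaL_2(q))$. On the other hand $|H| = (q+1)\,|G:G_0| \leqs (q+1)f$, with equality when $G = \PSigmaL_2(q)$, since $G \cap \PGL_2(q) = G_0$ forces $G/G_0$ to embed in $\la \ddot{\phi} \ra \cong C_f$. As the number of regular suborbits of $G$ equals the valency of $\Sigma(G)$ divided by $|H|$, this quantity is minimised over the relevant groups by $\PSigmaL_2(q)$, and so it is enough to prove that $\PSigmaL_2(q)$ itself has at least two regular suborbits.

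It then remains to bound below the valency of $\Sigma(\PSigmaL_2(q))$. By Lemma \ref{l:c3_cond_PSigmaL} (together with the reformulation of \eqref{e:bcon} recorded before Lemma \ref{l:c3_cond}), $\{\a,\omega_b\}$ is a base if and only if $c := b^{\frac12(q+1)}$ lies in no proper subfield of $\mathbb{F}_{q^2}$. The assignment $b \mapsto c$ is a homomorphism of $\mathbb{F}_{q^2}^{\times}$ onto a cyclic subgroup $C$ of order $2(q-1)$, with fibres of size $\frac12(q+1)$, and recall that $b \mapsto \omega_b$ is two-to-one since $\omega_b = \omega_{-b^{-q}}$. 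Now a generator $c$ of $C$ has order $2(q-1)$, which cannot divide $p^m-1$ for any proper subfield $\mathbb{F}_{p^m}$ of $\mathbb{F}_{q^2}$: the largest such subfield is $\mathbb{F}_q$ and $2(q-1) \nmid q-1$. Hence every generator of $C$ lies in no proper subfield, and the same order bound gives $c^2 \ne -1$, so each $b$ in the corresponding fibre satisfies $b^{q+1}\ne -1$ and defines a genuine point $\omega_b \in \O$. Since $q$ is odd there are $\phi(2(q-1)) = 2\phi(q-1)$ such generators, each carrying a fibre of size $\frac12(q+1)$; after the two-to-one identification this produces at least $\frac14(q+1)\cdot 2\phi(q-1)$ neighbours of $\a$. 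Dividing by $|H| = (q+1)f$ shows that $\PSigmaL_2(q)$ has at least $\phi(q-1)/(2f)$ regular suborbits.

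Finally, Lemma \ref{l:euler} yields $\phi(q-1) \geqs 4f$ for all $q > 27$ --- the very inequality already exploited in the $\C_2$ case --- so $\PSigmaL_2(q)$ has at least two regular suborbits and the result follows. I expect the main obstacle to be the valency estimate of the third paragraph: whereas in the $\C_2$ setting one counts non-squares directly inside $\mathbb{F}_q$, here the base criterion is phrased through the power $b^{\frac12(q+1)}$, so one must work inside the image subgroup $C$ of order $2(q-1)$ in $\mathbb{F}_{q^2}^{\times}$, carefully track both the two-to-one parametrisation $b \mapsto \omega_b$ and the constraint $b^{q+1}\ne -1$, and verify that generators of $C$ avoid every proper subfield. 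Once this bookkeeping is settled, the number-theoretic input is identical to the $\C_2$ argument.
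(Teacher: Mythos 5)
Your proposal is correct and follows the same overall strategy as the paper: reduce to $q>27$ odd and $G=\PSigmaL_2(q)$ via Proposition \ref{p:comp} and Corollary \ref{c:c3_min}, bound the valency of $\Sigma(G)$ from below using the base criterion of Lemma \ref{l:c3_cond_PSigmaL}, divide by $|H|=(q+1)f$, and finish with Lemma \ref{l:euler}. The one genuine difference is in the valency count. The paper takes the quick route of counting only the $\phi(q^2-1)$ primitive elements $\l$ of $\mathbb{F}_{q^2}$ (each of which trivially satisfies \eqref{e:bcon}), which forces it to treat the case $q=p$ separately by citing the exact subdegree computation in \cite[Lemma 7.9]{BurnessHarper}. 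You instead work with the full criterion: $\{\a,\omega_b\}$ is a base precisely when $b^{(q+1)/2}$ generates outside every proper subfield, and since the image subgroup $C$ has order $2(q-1)$, exceeding $|\mathbb{F}_q^\times|$, every generator of $C$ qualifies; pulling back along the $(q+1)/2$-to-one power map and the two-to-one parametrisation $b\mapsto\omega_b$ gives valency at least $\tfrac{1}{2}(q+1)\phi(q-1)$, hence $r\geqs \phi(q-1)/2f$. This is a sharper bound than the paper's $\phi(q^2-1)/2f(q+1)$, it reuses exactly the inequality $\phi(q-1)\geqs 4f$ already verified for Proposition \ref{p:psl2_c2reg}, and it removes both the case split at $q$ prime and the external citation. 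The bookkeeping you flag (fibre sizes, the identification $\omega_b=\omega_{-b^{-q}}$, and the check that $c^2\ne -1$ so that $\omega_b\in\O$) is all handled correctly.
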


\begin{proof}
In view of Proposition \ref{p:comp} and Corollary \ref{c:c3_min}, we may assume $q>27$ is odd and $G=\PSigmaL_2(q)$. Let $r$ be the number of regular suborbits of $G$. If $q$ is a prime then $G = G_0$ and \cite[Lemma 7.9]{BurnessHarper} gives $r=(q-\ell)/4$, where $q \equiv \ell \imod{4}$ with $\ell \in \{1,3\}$. For the remainder, we may assume $q>p$.

Let $\l$ be a primitive element of $\mathbb{F}_{q^2}$. Then by Lemma \ref{l:c3_cond}, we see that $\{\a,\omega_\l\}$ is a base for $G$.
Therefore, the valency of $\Sigma(G)$ is at least $\phi(q^2-1)/2$, where $\phi$ is Euler's function and thus $r \geqs \phi(q^2-1)/2f(q+1)$ since $|H|=f(q+1)$. By applying the lower bound in Lemma \ref{l:euler} we deduce that  
\[
\frac{\phi(q^2-1)}{2f(q+1)} \geqs 2
\]
for $q>27$ and the desired result follows.
\end{proof}

Finally, we consider the clique number $\omega(G)$ of $\Sigma(G)$. Our main result is the following, which establishes a strong form of Theorem \ref{t:main11} when $G = G_0$ is simple. 

\begin{prop}
	\label{p:psl2_c3cliq_G0}
	Suppose $G = \LL_2(q)$ and $H$ is of type $\GL_1(q^2)$, where $q$ is odd. Then $\omega(G)\geqs \frac{1}{2}(q-1)$.
\end{prop}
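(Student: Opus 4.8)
The plan is to produce a clique of size $\tfrac{1}{2}(q-1)$ as a single orbit of a well-chosen cyclic subgroup. Retaining the unitary set-up of this section, I identify $G = \LL_2(q)$ with $X_0 = \UU_2(q)$ and let $C \leqs G$ be the cyclic subgroup of order $\tfrac{1}{2}(q-1)$ obtained as the image of a maximal torus of $\SU_2(q)$ of order $q-1$; the preimage of $C$ is the pointwise stabiliser in $\SU_2(q)$ of two distinct \emph{isotropic} $1$-spaces $\langle w_1\rangle, \langle w_2\rangle$ of $U$, which are its two eigenlines. Since $C$ is abelian and acts on $\O$ as a group of graph automorphisms of $\Sigma(G)$, the orbit $\Delta = \alpha^C$ is a clique whenever $\{\alpha,\alpha^z\}$ is a base for all $1 \ne z \in C$. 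First I would fix a base point $\alpha \in \O$ with $C \cap G_\alpha = 1$: the only possible nontrivial intersection is an involution (note $\gcd(\tfrac{1}{2}(q-1), q+1) \leqs 2$), and one checks that the unique involution of $C$ (present precisely when $q \equiv 1 \imod 4$) fails to fix a generic $\alpha$. For such $\alpha$ the orbit $\Delta$ has size $|C| = \tfrac{1}{2}(q-1)$.

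The second step is to record a workable base criterion for $G_0$. Running the argument in the proof of Corollary \ref{c:c3_cond} verbatim, but invoking Remark \ref{r:c3g0} (so that $\{\alpha,\omega_d\}$ is a base if and only if $d$ is a non-square) in place of Lemma \ref{l:c3_cond_PSigmaL}, one obtains the following: for nonzero $b, c \in \mathbb{F}_{q^2}$ with $b^{q+1}, c^{q+1} \ne -1$, the pair $\{\omega_b, \omega_c\}$ is a base for $G_0$ if and only if
\[
\frac{(c-b)(b+b^{-q})}{c+b^{-q}}
\]
is a non-square in $\mathbb{F}_{q^2}$. Indeed, solving the relation in Corollary \ref{c:c3_cond} gives $d = (c-b)a_1^{-2}(b+b^{-q})(c+b^{-q})^{-1}$, and the factor $a_1^{-2}$ is automatically a square since $a_1^{q+1} = 1 + b^{q+1} \ne 0$. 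By the transitivity of $C$ it then remains only to parametrise $\Delta$ and to verify that this expression is a non-square for each pair of its members.

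To complete the verification I would pass to the coordinate $\psi = (b - i_1)(b - i_2)^{-1}$ adapted to the two isotropic fixed points $i_1, i_2$ of $C$; in this coordinate $C$ acts by $\psi \mapsto \kappa\psi$ with $\kappa$ ranging over the nonzero squares of $\mathbb{F}_q$, so the members of $\Delta$ are indexed by these $\kappa$, and the map $\sigma \colon b \mapsto -b^{-q}$ becomes $\psi \mapsto e\psi^q$ for a fixed $e$ with $e^{q+1}=1$. \textbf{The main obstacle is precisely this final non-square verification}: one must show that, for a single suitable starting point and for every nontrivial square $\kappa$, the displayed expression evaluates to a non-square in $\mathbb{F}_{q^2}$. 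I expect the cleanest route is to reformulate the condition group-theoretically: a pair of points of $\O$ \emph{fails} to form a base if and only if some involution of $G_0$ fixes both of them. This holds because a non-identity element of a point stabiliser $G_\alpha \cong D_{q+1}$ of odd order is regular semisimple, hence fixes a unique point of $\O$, while $|G_\alpha| = q+1$ is coprime to $p$; so the only elements capable of lying in two distinct point stabilisers are involutions. The task then becomes to show that no involution of $G_0$ simultaneously fixes $\alpha$ and $\alpha^z$ for $1 \ne z \in C$, which can be settled by a short fixed-point count on $\PG_1(q^2)$ using that every involution of $G_0$ has either a pair of orthogonal nondegenerate eigenlines (when $q \equiv 3 \imod 4$) or a pair of isotropic eigenlines (when $q \equiv 1 \imod 4$). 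Once this is established the orbit $\Delta$ is a clique and $\omega(G) \geqs \tfrac{1}{2}(q-1)$ follows at once.
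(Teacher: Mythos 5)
Your reduction steps are all sound: the observation that $\Delta=\alpha^C$ is a clique provided $\{\alpha,\alpha^z\}$ is a base for every $1\ne z\in C$; the criterion that $\{\omega_b,\omega_c\}$ is a base for $G_0$ if and only if $(c-b)(b+b^{-q})(c+b^{-q})^{-1}$ is a non-square in $\mathbb{F}_{q^2}$ (which does follow from Corollary \ref{c:c3_cond} and Remark \ref{r:c3g0} exactly as you say); and the fact that a pair of points fails to be a base precisely when a common involution fixes both. The problem is that the one step you leave unproved -- the existence of a starting point $\alpha$ whose whole $C$-orbit passes the pairwise test -- is not just hard to verify: it is \emph{false} when $q\equiv 1\imod{4}$. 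Carrying out your own parametrisation (normalising so that $\sigma$ acts as $\psi\mapsto-\psi^q$), write $\alpha=\{\psi_0,-\psi_0^q\}$, put $\theta=\psi_0^{1-q}$ (so $\theta^{q+1}=1$) and $u=\theta+\theta^{-1}\in\mathbb{F}_q$. Simplifying the non-square criterion for the pair $\{\psi_0,-\psi_0^q\}$, $\{\kappa\psi_0,-\kappa\psi_0^q\}$ shows that $\{\alpha,\alpha^z\}$ is a base if and only if $-(\kappa^2+u\kappa+1)$ is a non-square in $\mathbb{F}_q$, where $z$ acts as $\psi\mapsto\kappa\psi$. When $q\equiv 1\imod{4}$ this forces $\chi_0(\kappa^2+u\kappa+1)=-1$ for every one of the $(q-3)/2$ nonzero squares $\kappa\ne 1$, where $\chi_0$ is the quadratic character of $\mathbb{F}_q$. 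But the number of nonzero squares $\kappa$ with $\chi_0(\kappa^2+u\kappa+1)=-1$ equals $\frac{1}{4}\bigl(q+1-\sum_{\kappa}\chi_0(\kappa^3+u\kappa^2+\kappa)\bigr)$, and the Weil bound on this nondegenerate cubic character sum caps it at $\frac{1}{4}(q+1+2\sqrt{q})<\frac{1}{2}(q-3)$ once $q\geqs 17$ (the degenerate values $u=\pm 2$ make $\kappa^2+u\kappa+1$ a perfect square and fail outright since $-1$ is a square). Already for $q=17$ one can check by hand that every admissible $u$ fails, so no orbit of the split torus is a clique; the construction cannot be rescued by a cleverer choice of $\alpha$, and a fixed-point count on $\PG_1(q^2)$ cannot circumvent the quadratic-residue obstruction, since membership of the relevant involution in $\PSU_2(q)$ rather than $\PGU_2(q)$ is exactly that residue condition.

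For $q\equiv 3\imod{4}$ your construction does work: taking $\psi_0\in\mathbb{F}_q^{\times}$ (so $u=2$) turns $\kappa^2+u\kappa+1$ into $(\kappa+1)^2$, every condition holds, and the orbit you obtain coincides with a clique of the form used in the paper. The paper's proof sidesteps the parity of $q$ entirely by taking $\{\alpha\}\cup\{\omega_{bx}:x\in\mathbb{F}_q^{\times},\,(bx)^{q+1}\ne -1\}$ for a fixed non-square $b$ -- a ``line'' $b\mathbb{F}_q^{\times}$ rather than a torus orbit -- and checking via Corollary \ref{c:c3_cond} that the scalar $d$ attached to a pair $\{\omega_b,\omega_{by}\}$ is $b$ times a square times an element of $\mathbb{F}_q$, hence automatically a non-square; no character-sum issue arises. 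You would need to replace your candidate clique by one of this kind to complete the argument for $q\equiv 1\imod{4}$.
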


\begin{proof}
	Let $b$ be a non-square in $\mathbb{F}_{q^2}$ with $b^{q+1}\ne -1$. Then $bx$ is a non-square for all $x \in \mathbb{F}_q$ and thus $\{\alpha,\omega_{bx}\}$ is a base for $G$ if and only if $(bx)^{q+1}\ne -1$ (see Remark \ref{r:c3g0}). We claim that
	\[
	C=\{\a\}\cup \{\omega_{bx} \, : \, x\in\mathbb{F}_q^\times, \, (bx)^{q+1}\ne -1\}
	\]
	is a clique in $\Sigma(G)$ with $|C| \geqs \frac{1}{2}(q-1)$. 
	
	To see this, first note that if $x \in \mathbb{F}_q^\times$ and $(bx)^{q+1} = -1$, then $b^{q+1} = -x^{-2}$ and there are at most two possibilities for $x$. Let us also observe that if $y = -b^{-(q+1)}x^{-1}$ then $by = -(bx)^{-q}$ and thus $\omega_{bx} = \omega_{by}$. This shows that $|C| \geqs 1+\frac{1}{2}(q-3) = \frac{1}{2}(q-1)$. 
	
	To complete the proof of the proposition, it suffices to show that $\{\omega_{bx},\omega_{by}\}$ is a base for all distinct points $\omega_{bx}, \omega_{by}$ in $C$. To this end, set $c = bx$ and note that $c$ is a non-square such that $c^{q+1}\ne -1$ and $by = cyx^{-1}\in c\mathbb{F}_q^\times$. In this way, the problem is reduced to showing that $\{\omega_b,\omega_{by}\}$ is a base for all $y\in\mathbb{F}_q^\times$ with $(by)^{q+1}\ne -1$ and $by \ne -b^{-q}$ (the latter condition forces $\omega_b \ne  \omega_{by}$).
		
	By Corollary \ref{c:c3_cond}, it suffices to show that there exist scalars $a_1,d \in \mathbb{F}_{q^2}^{\times}$ such that 
	\begin{equation}\label{e:bzz}
	by = \frac{ba_1^{-2}(b+b^{-q})+b^{-q}d}{a_1^{-2}(b+b^{-q})-d},
	\end{equation}
	where $a_1^{q+1} = 1+b^{q+1}$ and $d$ satisfies the conditions in parts (ii) and (iii) of the corollary. In fact, in view of 
	Remark \ref{r:c3g0}, we can replace the condition in (iii) by the property that $d$ is a non-square.
	
	Let $a_1 \in \mathbb{F}_{q^2}^{\times}$ be any scalar such that $a_1^{q+1} = 1+b^{q+1}$, noting that $a_1$ exists since $b^{q+1} \in \mathbb{F}_q$. Since $by \ne -b^{-q}$, it follows that $b^{-(q+1)}+y \ne 0$ and we may define
	\[
	d = \frac{ba_1^{-2}(1+b^{-(q+1)})(y-1)}{b^{-(q+1)}+y}.
	\]
	Since $b^{q+1} \ne -1$ we see that $d \ne a_1^{-2}(b+b^{-q})$ and by rearranging we deduce that \eqref{e:bzz} holds. 
	In particular, since $by \ne 0$ we have $d \ne -b^{q+1}a_1^{-2}(b+b^{-q})$. If $d^{q+1}=-1$, then $(y-1)^2 = -b^{q+1}(b^{-(q+1)}+y)^2$, which forces $(b^{q+1}y^2+1)(b^{q+1}+1) = 0$. But this is false because $b^{q+1}y^2 = (by)^{q+1}\ne -1$ and $b^{q+1}\ne -1$, whence  $d^{q+1}\ne -1$ and $d$ satisfies all of the conditions in part (ii) of Corollary \ref{c:c3_cond}.
	Finally, let us observe that $a_1^{-2}$ is a square and $\frac{(1+b^{-(q+1)})(y-1)}{b^{-(q+1)}+y}\in\mathbb{F}_q$, which is also a square. Therefore, $d$ is a non-square since $b$ is a non-square. We conclude that $\{\omega_b,\omega_{by}\}$ is a base for $G$ and this completes the proof of the proposition.
\end{proof}

\begin{cor}
	\label{c:psl2_c3cliq_G0}
	If $G = G_0 = \LL_2(q)$ and $H$ is of type $\GL_1(q^2)$, then either $\omega(G)\geqs 5$, or $q = 5$ and $\omega(G) = 4$.
\end{cor}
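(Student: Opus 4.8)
The plan is to derive this as a direct consequence of Proposition \ref{p:psl2_c3cliq_G0}, using Proposition \ref{p:comp} to pin down the small cases. The first step is to reduce to $q$ odd. The statement only makes sense when $b(G)=2$, since $\Sigma(G)$ (and hence $\omega(G)$) is defined only in that case. For $G=G_0$ we have $\PGL_2(q)\leqs G$ precisely when $q$ is even, because then $\PGL_2(q)=\PSL_2(q)=G_0$; so \cite[Lemma 4.8]{Burness2020base} yields $b(G)=3$ for even $q$. Therefore I may assume $q$ is odd, which is exactly the hypothesis of Proposition \ref{p:psl2_c3cliq_G0}.

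Next I would apply Proposition \ref{p:psl2_c3cliq_G0} to obtain $\omega(G)\geqs \tfrac{1}{2}(q-1)$. Since $\tfrac{1}{2}(q-1)\geqs 5$ whenever $q\geqs 11$, this immediately gives $\omega(G)\geqs 5$ for every odd $q\geqs 11$, disposing of all but finitely many cases in one stroke.

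It then remains to treat odd $q\leqs 9$. Here primitivity forces the point stabiliser $H$ (of type $\GL_1(q^2)$, so $H=D_{q+1}$ when $G=G_0$ and $q$ is odd) to be maximal in $\PSL_2(q)$; as $D_{q+1}$ fails to be maximal for $q\in\{7,9\}$, the only surviving case is $q=5$, where $H=D_6$. This lies in the range $q\leqs 27$ covered by Proposition \ref{p:comp}, whose part (iii) gives $\omega(G)\geqs 4$ with equality exactly when $G=\LL_2(5)$ and $H=D_6$. Hence $\omega(G)=4$ here, and combining with the previous paragraph yields the stated dichotomy.

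I expect no serious obstacle beyond this bookkeeping: the only genuine subtlety is that the linear bound $\tfrac{1}{2}(q-1)$ of Proposition \ref{p:psl2_c3cliq_G0} does not reach $5$ until $q=11$, so the finitely many smaller odd primitive cases must be examined by hand, and it is precisely among these that the single exception $G=\LL_2(5)$ with $\omega(G)=4$ emerges. Both the reduction to odd $q$ and the exact value of $\omega(G)$ at $q=5$ rest on results already in place (\cite[Lemma 4.8]{Burness2020base} and Proposition \ref{p:comp}), so no new estimate or construction is required.
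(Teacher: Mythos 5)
Your proposal is correct and uses the same two ingredients as the paper: Proposition \ref{p:psl2_c3cliq_G0} for large $q$ and the computational Proposition \ref{p:comp} for small $q$ (the paper simply invokes Proposition \ref{p:comp} for all $q\leqs 27$ and the lower bound $\tfrac{1}{2}(q-1)$ for $q>27$, so your maximality discussion for $q\in\{7,9\}$ is unnecessary but harmless). This is essentially the paper's argument with a different cutoff.
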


\begin{proof}
By Proposition \ref{p:comp} we may assume $q>27$. Now apply Proposition \ref{p:psl2_c3cliq_G0}.
\end{proof}

\begin{rem}\label{r:c3_clique}
As for the $\C_2$-actions from the previous section, we have been unable to extend Corollary \ref{c:psl2_c3cliq_G0} to the base-two groups with $G \ne G_0$. Using a computational approach (see Section \ref{ss:clique}), it is straightforward to show that $\omega({\rm P\Sigma L}_{2}(q)) \geqs 5$ when  $5<q<1000$ and we expect $\omega(G) \geqs 5$ for all $q>5$. 

In order to illustrate some of the difficulties that arise, let us assume $G = \PSigmaL_2(q)$. As explained in the proof of Proposition \ref{p:psl2_c3star}, if $b \in \mathbb{F}_{q^2}$ is a scalar such that $b^{q+1} \ne -1$ and \eqref{e:bcon} holds for all $0<k<2f$, then $\{\alpha,\omega_b,\omega_{-b}\}$ a clique of size $3$. Similarly, we can choose a different scalar $c$ such that $\{\a,\omega_c, \omega_{-c}\}$ is a clique and we can seek to combine these sets to construct a clique $\{\alpha,\omega_b,\omega_{-b},\omega_c,\omega_{-c}\}$ of size $5$. For this approach to work, we need $\{\omega_b,\omega_c\}$ and $\{\omega_b,\omega_{-c}\}$ to be bases, which imposes various conditions on $c$.  For example, we need the scalars
\[
d=\frac{(c-b)a_1^{-2}(b+b^{-q})}{b^{-q}+c},\;\;  e = \frac{-(b+c)a_1^{-2}(b+b^{-q})}{b^{-q}-c}
\]
to satisfy all the conditions in parts (ii) and (iii) of Corollary \ref{c:c3_cond}, where $a_1 \in \mathbb{F}_{q^2}$ is chosen so that $a_1^{q+1} = 1+b^{q+1}$. These conditions are difficult to check and we have been unable to establish the existence of scalars $b$ and $c$ with the desired properties in full generality.
\end{rem}

\subsection{An extension}\label{ss:further}

In this section we take the opportunity to establish \cite[Conjecture 4.5]{SaxlGraph} for all base-two almost simple primitive groups $G$ with socle $G_0 = {\rm L}_{2}(q)$. This extends the main theorem of \cite{ChenDu2020Saxl}, which shows that $\Sigma(G)$ has diameter $2$.

\begin{thm}\label{t:further}
Let $G \leqs {\rm Sym}(\O)$ be a base-two almost simple primitive permutation group with socle $G_0 = {\rm L}_{2}(q)$. Then any two vertices in $\Sigma(G)$ have a common neighbour.
\end{thm}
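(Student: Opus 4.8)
The overall plan is to feed the soluble-stabiliser case into our existing work and then dispose of the few remaining configurations in which $H$ is insoluble. If $H$ is soluble then $G \in \mathcal{G}$, so \eqref{e:star} is exactly Theorem \ref{t:main1} and there is nothing to do. Thus I would assume $H$ is insoluble and invoke the description of the maximal subgroups of ${\rm L}_2(q)$ in \cite[Tables 8.1 and 8.2]{Low-Dimensional}: the insoluble possibilities are that $H$ is of type $A_5$, or that $H$ is a subfield subgroup of type ${\rm PSL}_2(q_0)$ with $q = q_0^r$ for an odd prime $r$, or of type ${\rm PGL}_2(q_0)$ with $q = q_0^2$. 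In every case I would use Theorem \ref{t:cd} to reduce \eqref{e:star} to its only content beyond ``diameter at most $2$'', namely: for every base $\{\alpha,\beta\}$ there exists $\gamma \in \O$ such that $\{\alpha,\gamma\}$ and $\{\beta,\gamma\}$ are both bases.

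For $H$ of type $A_5$ and for the subfield subgroups of type ${\rm PSL}_2(q_0)$ (so $q = q_0^r$ with $r \geqslant 3$), the probabilistic machinery of Section \ref{ss:prob} is effective. Here $|H|$ is either bounded (the $A_5$ case) or of order $q^{3/r}$, whereas the minimal size of a nontrivial conjugacy class of $G$ grows like $q^2$; a routine application of Lemma \ref{l:calc} then shows that $\what{Q}(G) \to 0$ as $q \to \infty$. Consequently $\what{Q}(G) < 1/2$ for all but finitely many $q$, and \eqref{e:star} follows from Proposition \ref{p:bound}(i). The finitely many remaining groups (all admissible $q$ below an explicit bound) would be settled computationally, exactly as in Proposition \ref{p:comp}: construct $G$ and $H$ with \texttt{AutomorphismGroupSimpleGroup} and \texttt{MaximalSubgroups}, run over a set of $(H,H)$ double coset representatives, and for each representative $x$ exhibit $y$ with $H \cap H^y = H^x \cap H^y = 1$.

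The genuine obstacle is the family with $G_0 = {\rm PSL}_2(q_0^2)$ and $H$ of type ${\rm PGL}_2(q_0)$, where $|H|^2$ is comparable to $|G|$. Here the probabilistic approach breaks down: a direct count shows $\what{Q}(G)$ is of order $q_0$, so Proposition \ref{p:bound} yields nothing, and, as for the $\C_2$- and $\C_3$-actions, a constructive argument is forced. I would identify $G_0$ with the group of the projective line ${\rm PG}(1,q_0^2)$ and $\O$ with the set of Baer sublines — the images of ${\rm PG}(1,q_0)$ under $G_0$ — so that a point stabiliser is the setwise stabiliser of a Baer subline. After reducing to the smallest base-two group in the family (an analogue of Corollary \ref{c:c2_min}), I would fix a base point $\alpha$, coordinatise the remaining Baer sublines, and prove a base-criterion in the spirit of Lemma \ref{l:c2_cond}: explicit necessary and sufficient conditions — a nondegeneracy condition, a non-square condition, and a ``not contained in a proper subfield of $\mathbb{F}_{q_0^2}$'' condition — for a pair of Baer sublines to form a base. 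With this in hand, the common neighbour $\gamma$ would be produced by an explicit formula, mirroring the choice of $\gamma$ in the proof of Proposition \ref{p:psl2_c2star}.

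I expect essentially all of the difficulty to lie in this last family: obtaining a workable coordinate description of pairs of Baer sublines and then verifying the base-criterion. As in Remarks \ref{r:c2_clique} and \ref{r:c3_clique}, the delicate point will be checking that the ratio arising in the construction is a non-square lying in no proper subfield of $\mathbb{F}_{q_0^2}$; once this is secured the argument runs parallel to the $\C_2$-analysis of Section \ref{ss:psl2_c2}, and the small cases excluded by the asymptotics are cleared with {\sc Magma}.
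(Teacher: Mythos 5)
Your reduction to the insoluble case and your handling of the type $A_5$ subgroups and the subfield subgroups with $q=q_0^r$, $r\geqslant 5$, agree with the paper, but you have misidentified the ``genuine obstacle'', and the plan you build around it cannot work. For $H$ a subfield subgroup with $q=q_0^2$, the paper shows that $b(G)\geqslant 3$, so these groups are excluded by the hypothesis of the theorem and the case is vacuous. Indeed, for $p$ odd we have $|\O|=\frac{1}{2}q_0(q_0^2+1)$ and hence $|G|\geqslant|G_0|>|\O|^2$, which forces $b(G)\geqslant 3$; the same count works for $p=2$ and $G\ne G_0$, while for $p=2$ and $G=G_0$ one reads off from the known subdegrees (see \cite[p.354]{FI}) that there is no regular suborbit. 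Your own observation that $|H|^2$ is comparable to $|G|$ is precisely the point: in fact $|H|^2>|G|$ here, so no pair of Baer sublines is a base, the ``base-criterion'' you propose to establish has an empty solution set, and there is no common neighbour to construct. The correct move is a one-line base-size count, not a constructive analysis in the spirit of Section \ref{ss:psl2_c2}.

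Separately, you underestimate the subfield case $q=q_0^3$, which is where the real work lies. With $|H|$ of order $q_0^3\log q$ and minimal nontrivial class size of order $q^2=q_0^6$ (e.g.\ $\frac{1}{2}(q^2-1)$ for unipotent elements), the ``routine'' application of Lemma \ref{l:calc} with $A=|H|$ returns a bound of order $(\log q)^2$, which does not tend to $0$ and is useless. The paper instead splits the prime order elements by type: it uses the exact count $q_0-1$ of unipotent elements of $H$, sums the contributions of the semisimple classes of each order $m$ dividing $q_0^2-1$ exactly (exploiting the fusion of $G_0$-classes under field automorphisms), and bounds the field automorphism contributions separately, before concluding that $\what{Q}(G)<1/2$ for $q_0\geqslant 7$ and clearing $q_0\in\{4,5\}$ by machine. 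With these two corrections --- eliminate $k=2$ by a base-size argument rather than a Saxl graph construction, and replace the crude estimate by a refined one when $k=3$ --- your outline matches the paper's proof.
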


\begin{proof}
The groups with $q \leqs 11$ can be handled using {\sc Magma} \cite{Magma} (see Section \ref{s:main1} below), so let us assume $q >11$. By combining Proposition \ref{p:bound} with Theorem \ref{t:random}, we may assume that either $G \in \mathcal{L}$ or a point stabiliser $H$ is insoluble. So in view of Propositions \ref{p:psl2_c2star} and \ref{p:psl2_c3star}, we may assume $H$ is insoluble. By inspecting \cite[Tables 8.1 and 8.2]{Low-Dimensional}, one of the following holds:
\begin{itemize}\addtolength{\itemsep}{0.2\baselineskip}
\item[{\rm (a)}] $H = S_5$ or $A_5$, $q \in \{p,p^2\}$ and $p \equiv \pm 1, \pm 3 \imod{10}$; 
\item[{\rm (b)}] $H$ is a subfield subgroup of type ${\rm GL}_{2}(q_0)$, where $q = q_0^k$, $k$ is a prime and $q_0 \geqs 4$.
\end{itemize}

First consider case (a) and recall that we write $i_m(H)$ for the number of elements of order $m$ in $H$. In view of Proposition \ref{p:bound}, it suffices to show that $\what{Q}(G)<1/2$ (see Section \ref{ss:prob}). We refer the reader to \cite[Section 3.2]{BG_book} for detailed information on the conjugacy classes of elements of prime order in $G$. 

Let $x \in H$ be an element of prime order $m$ and note that $m \in \{2,3,5\}$. If $m=2$ then $|x^G| \geqs \frac{1}{2}q^{1/2}(q+1) = b_1$ (minimal if $x$ is an involutory field automorphism) and we note that $i_2(H) \leqs 25 = a_1$. Similarly, if $m \in \{3,5\}$ then $|x^G| \geqs q(q-1) = b_2$ and $i_3(H)+i_5(H) = 44 = a_2$. Therefore, Lemma \ref{l:calc} implies that
\begin{equation}\label{e:qq}
\what{Q}(G) \leqs a_1^2/b_1+a_2^2/b_2
\end{equation}
and we deduce that $\what{Q}(G)<1/2$ if $q \geqs 197$. The remaining cases with $q<197$ can be verified using {\sc Magma}.

Finally, let us consider the subfield subgroups in (b). First assume $k=2$. We claim that $b(G) \geqs 3$. If $p$ is odd then
\[
|\O| = |G_0: {\rm PGL}_{2}(q_0)| = \frac{1}{2}q_0(q_0^2+1)
\]
and the claim follows since $b(G) \geqs \log_{|\O|}|G|>2$. Similarly, if $p=2$ and $G \ne G_0$ then $|\O| = q_0(q_0^2+1)$ and
\[
b(G) \geqs \frac{\log |G|}{\log |\O|} \geqs \frac{\log 2q_0^2(q_0^4-1)}{\log q_0(q_0^2+1)} > 2.
\]
Finally, suppose $p=2$ and $G = G_0$. Here $\log_{|\O|}|G|<2$ and the subdegrees of $G$ are recorded in \cite[p.354]{FI}. By inspection, we see that $G$ does not have a regular suborbit and thus $b(G) \geqs 3$.

Next assume $k \geqs 5$ and let $x \in H$ be an element of prime order. If $x$ is an involutory field automorphism of $G_0$, then $|x^G| \geqs \frac{1}{2}q^{1/2}(q+1) = b_1$ and we note that there are at most $a_1 = q_0^{1/2}(q_0+1)$ such elements in $H$. In each of the remaining cases, we have $|x^G| \geqs \frac{1}{2}q(q-1) = b_2$ and we observe that $|H| \leqs q_0(q_0^2-1)\log q = a_2$. By applying Lemma \ref{l:calc} we deduce that \eqref{e:qq} holds and this gives $\what{Q}(G)<1/2$ as required.

To complete the proof, we may assume $k=3$. This case requires a more refined treatment. First let $x \in H \cap {\rm PGL}_2(q)$ be an element of prime order $m$. If $x$ is unipotent (so $m=p$) then $|x^G| \geqs \frac{1}{2}(q^2-1) = b_1$ and we note that there are exactly $a_1 = q_0-1$ such elements in $H$. Similarly, if $x$ is a semisimple involution then $|x^G| \geqs \frac{1}{2}q(q-1) = b_2$ and we have $i_2({\rm PGL}_2(q_0)) = q_0^2 = a_2$. Next suppose $m \ne p$ and $m \geqs 3$, so $m$ divides $q_0^2-1$ and there are $\frac{1}{2}(m-1)$ distinct $G_0$-classes of such elements in $G$ (and the same number of ${\rm L}_{2}(q_0)$-classes in $H \cap {\rm PGL}_{2}(q)$). If $\{x_1, \ldots, x_t\}$ is a set of representatives of the distinct $G$-classes of these elements, then there exist positive integers $k_i$ such that $\sum_i k_i = \frac{1}{2}(m-1)$ and 
\[
|x_i^G \cap H| = k_iq_0(q_0+\e),\;\; |x_i^G| = k_iq_0^3(q_0^3+\e),
\]
where $\e =1$ if $m$ divides $q_0-1$, otherwise $\e=-1$ (here $|x_i^{G_0}| = |x_i^{{\rm PGL}_2(q)}| = q_0^3(q_0^3+\e)$ and $k_i$ denotes the number of distinct $G_0$-classes that are fused under the action of field automorphisms in $G$). Therefore, the contribution to $\what{Q}(G)$ from elements of order $m$ is equal to
\[
\sum_{i=1}^t\frac{(k_iq_0(q_0+\e))^2}{k_iq_0^3(q_0^3+\e)} = \frac{1}{2}(m-1)\cdot\frac{(q_0+\e)^2}{q_0(q_0^3+\e)}.
\]
If $m$ divides $q_0+1$ then $m-1 \leqs q_0$ and there are at most $\log(q_0+1)$ possibilities for $m$, so the total contribution to $\what{Q}(G)$ from these elements is at most 
\[
f_1(q_0) = \log(q_0+1) \cdot \frac{1}{2}q_0 \cdot \frac{(q_0-1)^2}{q_0(q_0^3-1)}.
\]
Similarly, the contribution from the elements with $m$ dividing $q_0-1$ is no more than
\[
f_2(q_0) = \log(q_0-1) \cdot \frac{1}{2}(q_0-2) \cdot \frac{(q_0+1)^2}{q_0(q_0^3+1)}.
\]

Finally, let us assume $x \in G$ is a field automorphism of order $m$. As above, if $m=2$ then $|x^G| \geqs \frac{1}{2}q^{1/2}(q+1) = b_3$ and there are at most $a_3 = q_0^{1/2}(q_0+1)$ of these elements in $H$. Next suppose $m=3$. Here $|x^G| \geqs q_0^2(q_0^4+q_0^2+1) = b_4$ and we may assume $H = C_G(x)$, which implies that $H$ contains at most 
\[
2\left(1+i_3({\rm L}_2(q_0))\right) \leqs 2\left(1+\frac{|{\rm GL}_2(q_0)|}{(q_0-1)^2}\right) = 2q_0(q_0+1)+2 = a_4
\]
such elements. If $m=5$ then $|x^G|>q_0^{36/5} = b_5$ and there are at most $8q_0^{12/5} =  a_5$ of these elements in $H$. Finally, if $m \geqs 7$ then $|x^G|>q_0^{54/7} = b_6$ and we observe that $|H| \leqs q_0(q_0^2-1)\log q = a_6$.

Set $\a=1$ if $q_0 = q_1^2$ for some $q_1$, otherwise $\a=0$. Similarly, let $\b = 1$ if $q_0 = q_1^5$ and $\gamma = 1$ if $q_0 = q_1^m$ for some prime $m \geqs 7$ (otherwise $\b=0$ and $\gamma=0$, respectively). Then by bringing all of the above estimates together, we conclude that
\[
\what{Q}(G) < f_1(q_0)+f_2(q_0) + \left(a_1^2/b_1+a_2^2/b_2+ a_4^2/b_4\right) + \a a_3^2/b_3+\b a_5^2/b_5 + \gamma a_6^2/b_6
\]
and one checks that this upper bound is less than $1/2$ for all $q_0 \geqs 7$. Finally, the cases with $q_0 \in \{4,5\}$ can be checked using {\sc Magma}.
\end{proof}

\section{Proof of Theorem \ref{t:main1}}\label{s:main1}

In this section we complete the proof of Theorem \ref{t:main1}. Let $G \leqs {\rm Sym}(\O)$ be a permutation group in $\mathcal{G}$ with socle $G_0$ and point stabiliser $H$. Recall that our goal is to show that the Saxl graph $\Sigma(G)$ has the following property:
\[
\mbox{\emph{Any two vertices in $\Sigma(G)$ have a common neighbour}} \tag{$\star$}
\]
which immediately implies that $\Sigma(G)$ has diameter $2$. In view of Propositions \ref{p:bound}, \ref{p:psl2_c2star} and \ref{p:psl2_c3star}, we may assume $G \in \mathcal{G}\setminus \mathcal{L}$ and $Q(G) \geqs 1/2$, so the relevant groups can be determined by inspecting Tables \ref{tab:random} and \ref{tab:random2} (see Theorem \ref{t:random}). In every one of these cases, we can verify property \eqref{e:star} using {\sc Magma}. 

To do this, we first construct $G$ and $H$ using the functions 
\[
\mbox{\texttt{AutomorphismGroupSimpleGroup} and \texttt{MaximalSubgroups}.}
\]
Next we identify a set $R$ of $(H,H)$ double coset representatives and then for each $x \in R$ we seek an element $y \in G$ (by random search) such that $H \cap H^y = H^x \cap H^y=1$. Notice that \eqref{e:star} holds if and only if such an element $y$ exists for each $x \in R$. As demonstrated by the following example, it is easy to implement this approach in {\sc Magma}.

\begin{ex}
Suppose $G_0 = \O_{8}^{+}(2)$, $G = G_0.3$ and $H$ is of type ${\rm O}_{2}^{-}(2) \times \GU_3(2)$. Here $Q(G) = 2071/2800>1/2$ and so this is one of the cases we need to consider. We proceed as follows, noting that $G$ has a unique conjugacy class of maximal subgroups of order $11664$:

\vspace{2mm}

{\small
\begin{verbatim}
G:=AutomorphismGroupSimpleGroup("O+",8,2);
S:=LowIndexSubgroups(G,2);
G:=S[1];
M:=MaximalSubgroups(G:OrderEqual:=11664);
H:=M[1]`subgroup;
R,T:=DoubleCosetRepresentatives(G,H,H);
z:=0;
for x in R do 
  if exists(y){y : y in G | #(H meet H^y) eq 1 and #(H^x meet H^y) eq 1} 
    then z:=z+1; 
  end if;
end for;  
z eq #R;
\end{verbatim}}
\end{ex}

\vspace{1mm}

\noindent This returns \texttt{true} and we conclude that \eqref{e:star} holds. An entirely similar approach is effective for all of the relevant groups in Tables \ref{tab:random} and \ref{tab:random2}. 

\section{Proof of Theorems \ref{t:main11} and \ref{t:main12}}\label{s:main11}

Let $G \leqs {\rm Sym}(\O)$ be a permutation group in $\mathcal{G}$ with socle $G_0$ and point stabiliser $H$. Let $\omega(G)$ and $\a(G)$ denote the clique and independence numbers of $\Sigma(G)$, respectively.

\subsection{Clique number}\label{ss:clique} In view of Proposition \ref{p:bound} and Theorem \ref{t:random}, together with our work in Sections \ref{ss:psl2_c2} and \ref{ss:psl2_c3}, it remains to verify the bound $\omega(G) \geqs 5$ for the groups appearing in Tables \ref{tab:random} and \ref{tab:random2}. With the aid of {\sc Magma}, this is a straightforward exercise, working with a suitable permutation representation of $G$ and $H$. For example, the following function for checking the bound $\omega(G) \geqs 5$ can be used effectively in every case. This uses the fact that $\omega(G) \geqs 5$ if and only if there exist elements $x_2, \ldots, x_5$ in $G$ such that $H^{x_i} \cap H^{x_j} = 1$ for all distinct $i,j \in \{1, \ldots, 5\}$, where $x_1 = 1$.

\vspace{2mm}

{\small
\begin{verbatim}
clique:=function(G,H);
exists(x2){y : y in G | #(H meet H^y) eq 1};
exists(x3){y : y in G | #(H meet H^y) eq 1 and #(H^x2 meet H^y) eq 1};
exists(x4){y : y in G | #(H meet H^y) eq 1 and #(H^x2 meet H^y) eq 1 
  and #(H^x3 meet H^y) eq 1};
exists(x5){y : y in G | #(H meet H^y) eq 1 and #(H^x2 meet H^y) eq 1 
  and #(H^x3 meet H^y) eq 1 and #(H^x4 meet H^y) eq 1};
return "omega(G) is at least 5";
end function;
\end{verbatim}}

\vs

\subsection{Independence number}\label{ss:indep}

Now let us turn to the independence number $\a(G)$ of $\Sigma(G)$. Here we apply work of Magaard and Waldecker \cite{MagaardIndependence2,MagaardIndependence3} and we begin with the following trivial observation.

\begin{lem}\label{l:pt}
Let $G \leqs {\rm Sym}(\O)$ be a permutation group and let $c$ be a positive integer. Then $\a(G) \leqs c$ only if every $(c+1)$-point stabiliser in $G$ is trivial. 
\end{lem}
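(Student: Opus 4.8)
The lemma claims that for a permutation group $G \leqs {\rm Sym}(\O)$ and a positive integer $c$, if $\a(G) \leqs c$ then every $(c+1)$-point stabiliser in $G$ is trivial. Here $\a(G)$ is the independence number of the Saxl graph $\Sigma(G)$, i.e.\ the maximal size of a co-clique — a set of pairwise non-adjacent vertices. Recall that two vertices of $\Sigma(G)$ are adjacent precisely when they form a base, so a co-clique is a set of points no two of which form a base. The statement is flagged as "trivial," so I should expect a short contrapositive argument rather than any substantial machinery.

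**The plan.** I would argue by contraposition. Suppose some $(c+1)$-point stabiliser is nontrivial, say there exist points $\a_1, \ldots, \a_{c+1} \in \O$ with $\bigcap_{i=1}^{c+1} G_{\a_i} \ne 1$. My goal is to produce a co-clique of size $c+1$, which shows $\a(G) \geqs c+1$, contradicting $\a(G) \leqs c$. The key observation is that if a nontrivial element $x$ fixes all of $\a_1, \ldots, \a_{c+1}$, then $x$ fixes each pair $\{\a_i, \a_j\}$ pointwise, so no two of the $\a_i$ can form a base; in other words, the $\a_i$ are pairwise non-adjacent in $\Sigma(G)$. One small subtlety to address is that the $\a_i$ should be distinct (a co-clique is a set, and we need $c+1$ distinct vertices): since a one-point stabiliser $G_{\a_i}$ certainly contains the nontrivial $x$, the points need not a priori be distinct, but a $(c+1)$-point stabiliser is understood to be the stabiliser of $c+1$ distinct points, so this is automatic from the convention.

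**Carrying it out.** Concretely, the steps are: (1) Assume for contradiction there is a nontrivial $(c+1)$-point stabiliser, and fix distinct $\a_1, \ldots, \a_{c+1}$ and a nontrivial $x \in \bigcap_i G_{\a_i}$. (2) Observe that for any $i \ne j$, the element $x$ lies in $G_{\a_i} \cap G_{\a_j}$, the pointwise stabiliser of $\{\a_i, \a_j\}$; since $x \ne 1$, the set $\{\a_i, \a_j\}$ is not a base, so $\a_i$ and $\a_j$ are non-adjacent in $\Sigma(G)$. (3) Hence $\{\a_1, \ldots, \a_{c+1}\}$ is an independent set of size $c+1$, giving $\a(G) \geqs c+1 > c$, contrary to hypothesis. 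The contrapositive then yields the claim.

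**Anticipated obstacle.** There is essentially no obstacle here — the result is genuinely a one-line unwinding of the definitions of base, Saxl graph adjacency, and point stabiliser. The only thing requiring any care is the bookkeeping convention that "$(c+1)$-point stabiliser" refers to the stabiliser of $c+1$ \emph{distinct} points, which is exactly what guarantees the independent set has the full size $c+1$ rather than collapsing. I would state the proof in the contrapositive form above, keeping it to a couple of sentences, since this lemma is merely a setup device for importing the Magaard--Waldecker bounds on point stabilisers to deduce lower bounds on $\a(G)$ in the proof of Theorem \ref{t:main12}.
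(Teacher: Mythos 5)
Your proof is correct and is exactly the intended argument: the paper states this as a ``trivial observation'' and supplies no proof, and your contrapositive unwinding (a nontrivial element fixing $c+1$ distinct points witnesses that they are pairwise non-adjacent, hence form a co-clique of size $c+1$) is the only reasonable route.
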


\begin{prop}\label{p:ag2}
Let $G \leqs {\rm Sym}(\O)$ be a group in $\mathcal{G}$ with socle $G_0$ and point stabiliser $H$. Then $\a(G) = 2$ if and only if $G = A_5$ and $\O$ is the set of $2$-element subsets of $\{1, \ldots, 5\}$.
\end{prop}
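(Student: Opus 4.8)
The plan is to prove the equivalence by combining the elementary lower bound $\a(G)\geqs 2$, the necessary condition supplied by Lemma \ref{l:pt}, and the classification of Magaard and Waldecker \cite{MagaardIndependence2,MagaardIndependence3}. First I would record that $\a(G)\geqs 2$ for \emph{every} $G \in \mathcal{G}$. Indeed, such a group is almost simple, so it is not a Frobenius group: the socle $G_0$ is the unique minimal normal subgroup and is nonabelian simple, hence $G$ has no nontrivial nilpotent normal subgroup to act as a Frobenius kernel. Since $\Sigma(G)$ is a complete graph if and only if $G$ is Frobenius, it follows that $\Sigma(G)$ has at least one pair of non-adjacent vertices, and thus $\a(G)\geqs 2$. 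Consequently the entire content of the proposition is to decide when $\a(G)\leqs 2$, that is, when $\Sigma(G)$ has no independent set of size $3$.

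For the forward implication, suppose $\a(G)=2$. Applying Lemma \ref{l:pt} with $c=2$, every $3$-point stabiliser of $G$ on $\O$ is trivial; equivalently, no nonidentity element of $G$ fixes three or more points of $\O$. The finite primitive permutation groups with this property --- that every nontrivial element has at most two fixed points --- are exactly the ones treated by Magaard and Waldecker, so I would intersect their classification in \cite{MagaardIndependence2,MagaardIndependence3} with the collection $\mathcal{G}$. Using the description of the base-two almost simple groups with soluble stabilisers from \cite{Burness2020base} together with the socle restrictions in \eqref{e:excl}, this should cut the possibilities down to a short explicit list. In particular, for the geometric $\C_2$- and $\C_3$-actions of groups with socle $G_0={\rm L}_{2}(q)$ one checks directly that an involution (or a suitable semisimple element) fixes more than two points as soon as $q$ is not tiny --- for the $\C_2$-action on pairs of points of the projective line an involution already fixes $1+\tfrac{1}{2}(q-1)\geqs 3$ pairs once $q\geqs 5$ --- while the remaining families in $\mathcal{G}$ contain elements of prime order with many fixed points, and so none of these satisfies the fixed-point bound.

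Finally, for each surviving candidate I would compute $\a(G)$ precisely. Here it is crucial to note that the condition from Lemma \ref{l:pt} is necessary but \emph{not} sufficient: an independent triple $\{\a,\b,\gamma\}$ can arise from three \emph{different} pairwise common stabilisers rather than from a single element fixing all three points, so triviality of all $3$-point stabilisers does not by itself force $\a(G)=2$. This last step, carried out directly or with {\sc Magma}, should show that every candidate other than $G=A_5$ acting on the $2$-element subsets of $\{1,\ldots,5\}$ in fact admits such a triple and hence has $\a(G)\geqs 3$. For the converse, when $G=A_5$ and $\O$ is the set of $2$-subsets, the graph $\Sigma(G)$ is the Johnson graph $J(5,2)$, the complement of the Petersen graph; its independence number equals the clique number of the Petersen graph, which is $2$, giving $\a(G)=2$. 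The main obstacle is precisely the interplay in this final step: extracting the Magaard--Waldecker list inside $\mathcal{G}$ and then bridging the gap between ``every $3$-point stabiliser is trivial'' and ``$\a(G)=2$'' by exhibiting an independent triple for every candidate except $A_5$ on $2$-subsets.
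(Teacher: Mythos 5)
Your proposal is correct and follows essentially the same route as the paper: apply Lemma \ref{l:pt} to reduce to groups in which every $3$-point stabiliser is trivial, invoke the Magaard--Waldecker classification \cite{MagaardIndependence2,MagaardIndependence3} to whittle this down to a short list inside $\mathcal{G}$ (the paper uses Theorems 3.20 and 1.3 of \cite{MagaardIndependence2}, splitting into $G=G_0$ and $G\ne G_0$), check the few survivors directly or with {\sc Magma}, and identify $\Sigma(A_5)$ on $2$-subsets with $J(5,2)$ to get $\a(G)=2$ in the exceptional case. Your additional observation that $\a(G)\geqs 2$ always holds because an almost simple group is never Frobenius, and your explicit warning that trivial $3$-point stabilisers are necessary but not sufficient, are both correct and consistent with what the paper does implicitly.
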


\begin{proof}
First assume $G = G_0$ is simple. We apply \cite[Theorem 3.20]{MagaardIndependence2}, which gives a complete list of all the finite simple transitive groups with the property that every $3$-point stabiliser is trivial. The theorem implies that $G = {\rm L}_2(q)$, ${}^2B_2(q)$ or ${\rm L}_{3}(4)$, and by inspection (using \cite[Theorem 2]{Burness2020base}) we deduce that $G = {\rm L}_{2}(4)$ with $H = D_6$ is the only example with $G$ primitive and $b(G)=2$. Here $\Sigma(G)$ is the Johnson graph $J(5,2)$ and $\a(G)=2$ (note that $G$ is permutation isomorphic to $A_5$ acting on the set of $2$-element subsets of $\{1, \ldots, 5\}$).

Now assume $G \ne G_0$. Then $G_0 \leqs {\rm Sym}(\O)$ is a transitive group and every $3$-point stabiliser is trivial, so as above the possibilities for $G_0$ and $\O$ are described in \cite[Theorem 3.20]{MagaardIndependence2} and by appealing to \cite[Theorem 1.3]{MagaardIndependence2} we see that $G_0 = {\rm L}_{2}(q)$ with $q=p^f$. More precisely, either $p$ is odd and $G = {\rm PGL}_{2}(q)$ or $G_0.2= \la G_0, \delta\phi^{f/2}\ra$ (in the notation of Section \ref{ss:psl2}), or $p=2$, $f$ is a prime and $G = \Aut(G_0)$. The cases with $q \in \{4,5\}$ can be checked using {\sc Magma} \cite{Magma} and we find that there are no groups $G\ne G_0$ with $\alpha(G) = 2$. Finally, let us assume $q\geqs 7$, in which case the relevant possibilities are labelled (a)--(d) in \cite[Theorem 3.20(1)]{MagaardIndependence2}. In (a) and (d), it is easy to check that $b(G_0)>2$, while the groups in (b) and (c) are imprimitive. Therefore, none of these cases arise and the proof is complete.
\end{proof}

\begin{prop}\label{p:ag3}
Let $G \leqs {\rm Sym}(\O)$ be a group in $\mathcal{G}$ with socle $G_0$ and point stabiliser $H$. Then $\a(G) \ne 3$.
\end{prop}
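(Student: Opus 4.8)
The plan is to follow the same strategy that succeeds in Proposition \ref{p:ag2}, but now ruling out $\a(G) = 3$ rather than characterising $\a(G) = 2$. By Lemma \ref{l:pt}, if $\a(G) = 3$ then every $4$-point stabiliser in $G$ is trivial, so I would invoke the classification of finite simple transitive groups with this property. This is precisely the content of the companion work \cite{MagaardIndependence3} (the analogue for $4$-point stabilisers of the $3$-point result \cite[Theorem 3.20]{MagaardIndependence2} used in the previous proposition). First I would split into the cases $G = G_0$ and $G \ne G_0$, reducing in the latter case to the observation that $G_0 \leqs {\rm Sym}(\O)$ is itself transitive with trivial $4$-point stabilisers, so the socle and its action are still governed by the classification in \cite{MagaardIndependence3}.

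The key step is then to intersect the list of candidate pairs $(G_0, \O)$ produced by \cite{MagaardIndependence3} with the membership condition $G \in \mathcal{G}$: that is, $G$ is almost simple primitive with $b(G) = 2$ and soluble point stabiliser $H$. I expect the classification to force $G_0$ into a very short list (on the analogy with Proposition \ref{p:ag2}, the likely survivors are again among ${\rm L}_2(q)$, ${}^2B_2(q)$ and ${\rm L}_3(4)$, possibly with one or two further small groups appearing because the $4$-point condition is weaker than the $3$-point condition). For each surviving candidate I would check against \cite[Theorem 2]{Burness2020base} and \cite[Tables 8.1, 8.2]{Low-Dimensional} whether there is actually a primitive action with soluble stabiliser and $b(G) = 2$, and whether that action is primitive as opposed to imprimitive. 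The cases surviving these structural tests should be only finitely many small groups, each with $q$ small enough to be settled directly in {\sc Magma} by computing $\a(G)$ explicitly (or by exhibiting a co-clique of size $4$, which shows $\a(G) \geqs 4 \ne 3$, or a regular suborbit structure forcing $\a(G) = 2$).

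The main obstacle will be the bookkeeping needed to show that \emph{no} admissible pair yields exactly $\a(G) = 3$: one must rule out the value $3$ specifically, so for each candidate that passes the structural filters it is not enough to know $\a(G) \geqs 4$ is false — one must verify the actual independence number is $2$ or $\geqs 4$, never $3$. Concretely, I would expect the generic families (e.g. the ${\rm L}_2(q)$ actions for large $q$) to be eliminated on the grounds that $b(G) > 2$ or that the action is imprimitive, exactly as items (a)--(d) of \cite[Theorem 3.20]{MagaardIndependence2} were dispatched in Proposition \ref{p:ag2}; the only genuine computation should be a handful of small cases with $q \in \{4,5,7\}$ and perhaps ${\rm L}_3(4)$ or a Suzuki group, all checkable in {\sc Magma}. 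The delicate point is ensuring the appeal to \cite{MagaardIndependence3} covers the almost simple (non-socle) groups as well, which is why I would first reduce to the transitivity of $G_0$ and cite the relevant structural theorem of Magaard and Waldecker describing how $G$ relates to $G_0$ in their list.
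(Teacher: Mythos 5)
Your proposal follows the paper's proof essentially verbatim: reduce via Lemma \ref{l:pt} to trivial $4$-point stabilisers, invoke the Magaard--Waldecker classification in \cite{MagaardIndependence3} (Theorem 1.1(i) for $G = G_0$ and Theorem 1.2 for $G \ne G_0$), and intersect the resulting list with membership in $\mathcal{G}$. The one refinement the paper adds is the observation that one may further assume some $3$-point stabiliser is nontrivial (the contrary case having been dealt with in Proposition \ref{p:ag2}), which is the hypothesis under which the cited theorems apply; it then transpires that no compatible examples survive at all, so the case-by-case verification of $\a(G) \in \{2\} \cup \{4,5,\ldots\}$ that you anticipate is never actually needed.
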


\begin{proof}
Seeking a contradiction, suppose $\a(G)=3$, in which case every $4$-point stabiliser is trivial. We may also assume that there exists a nontrivial $3$-point stabiliser. If $G=G_0$ then the possibilities for $G$ are described in \cite[Theorem 1.1(i)]{MagaardIndependence3}, but we find that there are no compatible examples with $G$ primitive, $H$ soluble and $b(G)=2$. The cases with $G \ne G_0$ are recorded in \cite[Theorem 1.2]{MagaardIndependence3} and once again we see that there are no valid examples.
\end{proof}

\vs

This completes the proof of Theorems \ref{t:main11} and \ref{t:main12}.

\section{Proof of Theorem \ref{t:main2}}\label{s:main2}

In this final section we prove Theorem \ref{t:main2} on the groups $G \in \mathcal{G}$ with a unique regular suborbit. If $G \in \mathcal{L}$ then we refer the reader to Propositions \ref{p:psl2_c2reg} and \ref{p:psl2_c3reg}, so we may assume $G \in \mathcal{G} \setminus \mathcal{L}$. If $Q(G) \geqs 1/4$ then we can simply read off the relevant groups in Tables \ref{tab:random} and \ref{tab:random2}, so we may assume $Q(G)<1/4$. Then in view of \eqref{e:QG}, it follows that $G$ has a unique regular suborbit only if 
\[
|H|^2 > \frac{3}{4}|G|,
\]
where $H$ is a point stabiliser. The following result reveals that there are no such groups and this completes the proof of Theorem \ref{t:main2}.

\begin{prop}\label{p:size}
Let $G$ be a group in $\mathcal{G}\setminus \mathcal{L}$ with point stabiliser $H$. If $Q(G)<1/4$ then $|H|^2 \leqs \frac{3}{4}|G|$.
\end{prop}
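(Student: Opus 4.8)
The plan is to combine the classification in Theorem~\ref{t:random} with elementary order estimates, the point being that the condition $|H|^2 > \frac{3}{4}|G|$ forces $H$ to be so large relative to $G$ that it simply cannot occur for the groups under consideration. Writing $n = |\O| = |G:H|$, recall from \eqref{e:QG} that $Q(G) = 1 - r|H|^2/|G|$, where $r \geqs 1$ is the number of regular suborbits of $G$. By Theorem~\ref{t:random}, the hypothesis $Q(G) < 1/4$ ensures that $(G,H)$ is none of the finitely many pairs in Tables~\ref{tab:random} and~\ref{tab:random2}; in particular the parameters defining $G_0$ and $H$ avoid the small values listed there.

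First I would dispose of the case $r \geqs 2$. If $G$ has at least two regular suborbits then $\O$ contains the trivial suborbit together with two suborbits of length $|H|$, so $n \geqs 1 + 2|H| \geqs \frac{4}{3}|H|$, whence $|H|^2 \leqs \frac{3}{4}|H|n = \frac{3}{4}|G|$, as required. This leaves the groups with a unique regular suborbit; for these $Q(G) < 1/4$ is equivalent to $|H|^2 > \frac{3}{4}|G|$, so the real content is that no pair outside the tables satisfies this inequality. I would in fact establish the uniform bound $|H|^2 \leqs \frac{3}{4}|G|$ for every relevant $(G,H)$, which does not use the value of $r$ and hence settles the outstanding case at a stroke.

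The main step is a pass through the possible socles. The non-classical cases are quick: when $G_0$ is alternating, Proposition~\ref{p:alt} gives $|H| \leqs m(m-1)$ against $|G| \geqs |A_m|$, so $|H|^2 \leqs \frac{3}{4}|G|$ with vast room once $m \geqs 13$, the remaining small degrees being tabulated; when $G_0$ is sporadic the bound is read off the finite list of soluble maximal subgroups; and when $G_0$ is exceptional, $H$ is either a maximal-torus normaliser or one of the groups in Table~\ref{tab:ex}, of order polynomial in $q$ of degree far below $\frac{1}{2}\dim G_0$, so the inequality holds for all $q$ outside the few tabulated values. For $G_0$ classical I would work through the families of Table~\ref{tab:cases}, bounding $|H|$ above using the structural descriptions in \cite{KleidmanLiebeckClassicalGroups} and comparing with the usual lower bound for $|G_0|$. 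In each family the power of $q$ appearing in $|H|^2$ is strictly smaller than that in $|G|$, so $|H|^2 \leqs \frac{3}{4}|G|$ for all but finitely many $(n,q)$, and those finitely many small cases either appear in Tables~\ref{tab:random}/\ref{tab:random2} (hence are excluded by $Q(G)<1/4$) or fail to be base-two.

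The hardest families, and the main obstacle, are those whose soluble stabiliser is largest relative to $|G|$: the imprimitive wreath-type subgroups $\GU_1(q)\wr S_n$, $\Sp_2(q)\wr S_{n/2}$ and ${\rm O}_4^+(q)\wr S_{n/4}$, together with the subgroups of type $\GU_3(2)$ and ${\rm O}_4^+(q)$. Here $|H|$ carries an additional factor of $n!$ or a fixed subfield order, so the margin is narrowest and the boundary cases (small $q$, and the low-dimensional subfield cases) must be checked individually; each such case is either recorded in Table~\ref{tab:random} or~\ref{tab:random2} with $Q(G)\geqs 1/4$, or is directly verified to satisfy $|H|^2 \leqs \frac{3}{4}|G|$. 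This completes the argument.
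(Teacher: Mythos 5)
Your proposal is correct and follows essentially the same route as the paper: a case-by-case pass over the socle types, bounding $|H|$ from above and $|G|$ from below to verify the uniform inequality $|H|^2 \leqs \frac{3}{4}|G|$, with the small and borderline cases (including ${\rm PGSp}_6(3)$ with $H$ of type ${\rm Sp}_2(3) \wr S_3$, which fails to be base-two) handled individually. Your preliminary reduction via $r \geqs 2$ is correct but ultimately redundant, since the uniform bound you then establish covers all cases at once.
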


\begin{proof}
First assume $G_0 = A_m$ is an alternating group. If $m \leqs 12$ then it is easy to verify the desired bound with the aid of {\sc Magma}. On the other hand, if $m>12$ then by inspecting \cite[Table 14]{SolubleStabiliser} and \cite[Table 4]{Burness2020base} we deduce that $m$ is a prime, $H = {\rm AGL}_1(m)\cap G$ and we have 
\[
\frac{|H|^2}{|G|} \leqs \frac{m(m-1)}{(m-2)!}<\frac{3}{4}
\]
as required. Similarly, if $G_0$ is a sporadic group then the possibilities for $G$ and $H$ can be determined by combining the information in Table \ref{tab:random} and \cite[Table 4]{Burness2020base} with the tables of maximal subgroups in \cite{Wilson}. In each case, it is a straightforward exercise to show that $|H|^2 \leqs \frac{3}{4}|G|$.

Next suppose $G_0$ is an exceptional group of Lie type. Then either $H=N_G(T)$ for some maximal torus $T$ of $G_0$ (see \cite[Table 5.2]{MaxExceptional}), or $(G,H)$ is recorded in Table \ref{tab:ex}. One can now verify the bound $|H|^2 \leqs \frac{3}{4}|G|$ by inspection. For example, if $G_0 ={}^2B_2(q)$ with $q = 2^f$ and $f \geqs 3$ odd, then $|G| \geqs q^2(q^2+1)(q-1)$ and by inspecting \cite[Table 5.2]{MaxExceptional} we deduce that $|H|\leqs 4(q+\sqrt{2q}+1)\log q$. A routine calculation shows that $|H|^2 \leqs \frac{3}{4}|G|$. 

Finally, let us assume $G_0$ is a classical group. Then $(G,H)$ is one of the cases recorded in Table \ref{tab:cases} and once again the result follows by inspection (recall that in each case, the precise structure of $H$  is given in \cite{KleidmanLiebeckClassicalGroups}). For instance, if $G_0 = {\rm L}_4^{\e}(q)$ and $H$ is of type $\GL_1^{\e}(q)\wr S_4$ with $q \geqs 3$, then $|H| \leqs 48(q+1)^3\log q$ and the result follows since $|G|>\frac{1}{8}q^{15}$. In fact, one can check that the only case in Table \ref{tab:cases} with $|H|^2>\frac{3}{4}|G|$ is where $G = {\rm PGSp}_6(3)$ and $H$ is of type ${\rm Sp}_2(3) \wr S_3$. But $b(G)=3$ in this case (see \cite[Table 7]{Burness2020base}), so this is not a group in $\mathcal{G}$.
\end{proof}

\end{document}